\newenvironment{cpf}{\begin{trivlist} \item[] {\em Proof of Claim.}}{\hspace*{\stretch{1}} $\diamond$ \end{trivlist}}
\definecolor{darkblue}{rgb}{0,0,.5}
\DeclareMathOperator{\cone}{cone} 
\DeclareMathOperator{\conv}{conv} 
\newcommand{\mbb}[1]{\mathbb{#1}}
\newcommand{\mcf}[1]{\mathcal{#1}}
\newcommand{\mbfs}[1]{\boldsymbol{#1}}
\DeclareMathOperator{\intr}{intr}
\newcommand{\T}{\top}
\newtheorem{Claim}{Claim}
\definecolor{MediumRed}{rgb}{0.925, 0.345, 0.345}
\definecolor{MediumGreen}{rgb}{0.37, 0.7, 0.66}
\definecolor{MediumBlue}{rgb}{0.015, 0.315, 0.45}
\definecolor{MediumPurple}{RGB}{153, 102, 203}
\definecolor{JungleGreen}{rgb}{0.16, 0.67, 0.53}
  \newcommand{\T}{\mathsf{T}}
  \renewcommand{\T}{\mathsf{T}}
\newtheorem{rmk}[remark]{Remark}
\title{A characterization of maximal homogeneous-quadratic-free sets\footnote{An extended abstract of this article appeared at IPCO 2023~\cite{MPS2023}. This full version contains many new results, including a resolution of a conjecture posed in the extended abstract.}}
\author{
  Gonzalo Mu\~noz \and
  Joseph Paat\and
  Felipe Serrano
}
\institute{
G. Mu\~noz \at
Engineering Sciences Institute, Universidad de O'Higgins, Rancagua, Chile \\
  \email{gonzalo.munoz@uoh.cl} 
  \and
J. Paat \at
Sauder School of Business, University of British Columbia, Vancouver
BC, Canada \\
  \email{joseph.paat@sauder.ubc.ca} 
  \and
  F. Serrano \at
  COPT GmbH, Berlin, Germany \\
  \email{serrano@copt.de}
}
\date{}
\begin{document}
\maketitle

\maketitle
\begin{abstract}

The intersection cut framework was introduced by Balas in 1971 as a method for generating cutting planes in integer optimization.
In this framework, one uses a full-dimensional convex $S$-free set, where $S$ is the feasible region of the integer program, to derive a cut separating $S$ from a non-integral vertex of a linear relaxation of $S$.
Among all $S$-free sets, it is the inclusion-wise maximal ones that yield the strongest cuts.
Recently, this framework has been extended beyond the integer case in order to obtain cutting planes in non-linear settings.
In this work, we consider the specific setting when $S$ is defined by a homogeneous quadratic inequality.
In this `quadratic-free' setting, every function $\Gamma: D^m \to D^n$, where $D^k$ is the unit disk in $\mbb{R}^k$, generates a representation of a quadratic-free set. 
While not every $\Gamma$ generates a maximal quadratic free set, it is the case that every full-dimensional maximal quadratic free set is generated by some $\Gamma$.
Our main result shows that the corresponding quadratic-free set is full-dimensional and maximal if and only if $\Gamma$ is non-expansive and satisfies a technical condition.
This result yields a broader class of maximal $S$-free sets than previously known.
Our result stems from a new characterization of maximal $S$-free sets (for general $S$ beyond the quadratic setting) based on sequences that `expose' inequalities defining the $S$-free set.

\smallskip
\noindent \textbf{Keywords.} Quadratic programming, cutting planes, intersection cuts, $S$-free sets

\end{abstract}%

\section{Introduction}

Given a closed set $S \subseteq \mbb{R}^d$, we say that a closed convex set $C \subseteq \mbb{R}^d$ is {\it $S$-free} if its interior $\intr(C)$ contains no points from $S$.
The family of $S$-free sets forms the foundation of {\it intersection cuts} for mathematical programs of the form
\begin{equation}\label{eq:mainprob}
\min \{\mbfs{c}{}^\T \mbfs{s}:\ \mbfs{s} \in S\},
\end{equation}
where $\mbfs{c} \in \mbb{R}^d$.
The overall intersection cuts framework operates as follows: if one solves a linear relaxation of \eqref{eq:mainprob} and obtains a vertex $\mbfs{s}\not\in S$, then the construction of an $S$-free set containing $\mbfs{s}$ in its interior ensures the separation of $\mbfs{s}$.
For a general reference on intersection cuts including more on the precise derivation of an intersection cut from an $S$-free set, we point to~\cite[Chapter 6]{CCZ2014}.
Intersection cuts were introduced by Balas~\cite{B1971} when $S$ is a lattice and by Tuy~\cite{T1964} when $S$ is
a reverse convex set.
Since then, intersection cuts have been well-studied; see, e.g.,~\cite{AJ2013,ALWW2007,ABP2018,BCM2019,BCM2020,CCDLM2014,MKV2016}.

Inclusion-wise {\it maximal} $S$-free sets play an important role as they generate the strongest intersection cuts.
Through the lens of mixed-integer duality, maximal $S$-free sets can also serve as optimality certificates for mixed-integer programs; see~\cite{BOW2016,BCCWW2017,PSS2022}.
For the case when $S$ is a lattice, Lov\'{a}sz~\cite{L1989} demonstrates that full-dimensional maximal $S$-free sets are polyhedra with integer points in the relative interior of each facet; see also~\cite{A2013}. 
See~\cite{BCCZ2010,basu2010minimal} for extensions of Lov\'{a}sz's result to lattice points in linear subspaces and rational polyhedra.
Connections between maximal $S$-free sets and the Helly number of $S$ have been established in~\cite{averkov2013maximal,conforti2016maximal,BCCWW2017}.
There are some characterizations of maximality beyond the lattice setting, e.g.,~\cite{BCM2019,BCM2020} characterize various maximal $S$-free sets when $S$ is the set of rank 1 real-valued symmetric matrices. 

Mu\~{n}oz and Serrano~\cite{MS2020,MS2022} are the first to study the setting when $S$ is defined by an arbitrary quadratic inequality. 
They develop methods for proving maximality when $S$ is defined by either a single homogeneous or a single non-homogeneous quadratic inequality. 
A computational implementation of the resulting intersection cuts is developed in \cite{chmiela2022implementation}, with favorable results.
In this paper, we focus on the homogeneous quadratic setting
\[
S = \left\{ \mbfs{s} \in \mbb{R}^{d}  : \mbfs{s}{}^\T \mbfs{A} \mbfs{s} \le0\right\},
\]
where $\mbfs{A} \in \mbb{R}^{d\times d}$.
We note that maximal $S$-free sets derived in this setting extend to (not necessarily maximal) sets that can be used in the non-homogeneous setting as well. 
Indeed, an $\widehat{S}$-free set for the non-homogeneous setting
\[
\widehat{S} := \left\{ \mbfs{s} \in \mbb{R}^{d}  : \mbfs{s}{}^\T \mbfs{A} \mbfs{s} +  \mbfs{g}^\T \mbfs{s} +  h \le0\right\}
\]
can be constructed by taking an $S'$-free set for the homogeneous set 
\[
S':=  \left\{ (\mbfs{s},z) \in \mbb{R}^{d}\times \mbb{R} : \mbfs{s}{}^\T \mbfs{A} \mbfs{s} +  (\mbfs{g}^\T \mbfs{s})z +  hz^2 \le0\right\}
\]
and intersecting the $S'$-free set with $z=1$.
To simplify our presentation in the homogeneous setting, we follow reductions in~\cite{MS2020,MS2022} to assume, without significant loss of generality, that the homogeneous setting has the form
\[
Q := \left\{ (\mbfs{x},\mbfs{y}) \in \mbb{R}^n \times \mbb{R}^m : \|\mbfs{x}\| \le \|\mbfs{y}\|\right\},
\]
where $\|\cdot\|$ is the $\ell_2$-norm. 
We replace $S$ (resp. $S$-free) with $Q$ (resp. $Q$-free) to highlight that we are looking at {\it quadratic-free sets}.
We use this definition of $Q$ for the rest of the paper,
and refer to any set $C$ that is $Q$-free as {\it homogeneous quadratic-free}.

Among their results, Mu\~{n}oz and Serrano prove that a particular homogeneous quadratic-free set is maximal~\cite[Theorem 2.1]{MS2022}.
One of the motivations in this paper is to provide more general characterizations of maximal $Q$-free sets that can be used to generate alternative families of them and, consequently, new families of cutting planes for quadratically-constrained problems. 

In this paper, we demonstrate that every maximal $Q$-free set has a special form $C_{\Gamma}$ for a particular function $\Gamma$; see~\eqref{Cgammadef}.
Then, we characterize maximality of $C_{\Gamma}$ based on properties of $\Gamma$.
In order to derive our characterizations, we also derive a new characterization of maximality for $S$-free sets when $S$ is an arbitrary closed set.

\medskip

\noindent{\bf Notation.} 
Suppose $\mcf{X} \subseteq \mbb{R}^d$.
We use $\conv(\mcf{X})$ to denote the convex hull of $\mcf{X}$.
We use $\cone(\mcf{X}) := \{\sum_{i=1}^t \lambda_i \mbfs{x}^i:\ t\in \mbb{N},\ \lambda_i \ge 0~\text{and}~\mbfs{x}^i \in \mcf{X} ~\forall ~i \in \{1, \dotsc, t\}\}$ to denote the conic hull of $\mcf{X}$.
Similarly, we use $\overline{\conv}(\mcf{X})$ and $\overline{\cone}(\mcf{X})$ to denote the closed convex hull and closed conic hull of $\mcf{X} $, respectively.
For background on convexity including common definitions, we point to~\cite{R1970}.
We use bold font to denote vectors.
We set $D^d := \{\mbfs{x} \in \mbb{R}^d:\ \|\mbfs{x}\| = 1\}$.

\subsection{Contributions} \label{sec:contributions}

In order to study full-dimensional $Q$-free sets, we consider a special family of sets parameterized by functions of the form $\Gamma : D^m \to D^n$.
Given such a $\Gamma$, define the set
\begin{equation} \label{Cgammadef}
C_\Gamma := \{(\mbfs{x},\mbfs{y}) \in \mbb{R}^n \times \mbb{R}^m : \Gamma(\mbfs{\beta})^\T \mbfs{x} - \mbfs{\beta} ^\T \mbfs{y} \ge 0 ~~ \forall ~ \mbfs{\beta} \in D^m\}.
\end{equation}
Sets of this form play an important role for us because for every $\Gamma$, the set $C_{\Gamma}$ is $Q$-free.
Indeed, by the Cauchy-Schwarz inequality any $(\mbfs{x},\mbfs{y})$ in the interior of $C_\Gamma$ satisfies $\mbfs{\beta} ^\T \mbfs{y} < \|  \mbfs{x}\| $ for all $\mbfs{\beta} \in D^m$ which implies that $\|\mbfs{y}\| < \|\mbfs{x}\|$.
We have observed the following:

\begin{rmk}[Every $C_{\Gamma}$ is $Q$-free]\label{rmkQFree}
Let $\Gamma : D^m \to D^n$ and $C_{\Gamma}$ be defined as in~\eqref{Cgammadef}.
The set $C_\Gamma$ is closed, convex and $Q$-free.
\end{rmk}

Moreover, not only is $C_{\Gamma}$ $Q$-free for every $\Gamma$, but interestingly every full-dimensional maximal $Q$-free set can be written in the `standard form'~\eqref{Cgammadef}.
This constitutes our first main result. 

\begin{theorem}[Standard form is necessary for maximality]\label{thmStdForm}
Let $C$ be a full-dimensional closed convex maximal $Q$-free set. 
There exists a function $\Gamma : D^m \to D^n$ such that $C=C_\Gamma$.
\end{theorem}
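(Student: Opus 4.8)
The plan is to reduce the statement to a single existence question and then close it using maximality. The crucial leverage is Remark~\ref{rmkQFree}: since $C_\Gamma$ is closed, convex and $Q$-free for \emph{every} $\Gamma$, it suffices to exhibit one function $\Gamma:D^m\to D^n$ with $C\subseteq C_\Gamma$. Indeed, $C\subseteq C_\Gamma$ together with the $Q$-freeness of $C_\Gamma$ and the maximality of $C$ immediately forces $C=C_\Gamma$. Thus the entire proof amounts to producing, for each $\mbfs{\beta}\in D^m$, a \emph{unit} vector $\Gamma(\mbfs{\beta})\in D^n$ for which the standard inequality $\Gamma(\mbfs{\beta})^\T\mbfs{x}-\mbfs{\beta}^\T\mbfs{y}\ge 0$ is valid on all of $C$.

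First I would record two structural reductions. Because $Q$ is a cone and $\intr(\lambda C)=\lambda\,\intr(C)$ while $\lambda Q=Q$ for every $\lambda>0$, each dilate $\lambda C$ is again $Q$-free; combining this homogeneity with maximality, I would argue that $C$ is a closed convex cone with apex at the origin (concretely, that $\cl(\cone(C))$ remains $Q$-free and contains $C$, so maximality gives $C=\cl(\cone(C))$). Being a cone, $C$ is the intersection of the homogeneous halfspaces indexed by its polar, equivalently $C=(C^*)^*$ for the dual cone $C^*$. Second, since $\mbfs{0}\in Q$ we have $\mbfs{0}\notin\intr(C)$, and as $\intr(C)\subseteq\{(\mbfs{x},\mbfs{y}):\|\mbfs{x}\|>\|\mbfs{y}\|\}$ while $C=\cl(\intr(C))$ for a full-dimensional closed convex set, every $(\mbfs{x},\mbfs{y})\in C$ satisfies $\|\mbfs{x}\|\ge\|\mbfs{y}\|$. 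In particular $C\cap(\{\mbfs{0}\}\times\mbb{R}^m)=\{\mbfs{0}\}$, a fact I will use to guarantee existence of valid inequalities.

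Now fix $\mbfs{\beta}\in D^m$. The requirement $\mbfs{a}^\T\mbfs{x}\ge\mbfs{\beta}^\T\mbfs{y}$ on $C$ says precisely that $(\mbfs{a},-\mbfs{\beta})\in C^*$, so I must show the slice $A_{\mbfs{\beta}}:=\{\mbfs{a}:(\mbfs{a},-\mbfs{\beta})\in C^*\}$ is nonempty and meets $D^n$. Nonemptiness is a conic-duality argument: if $-\mbfs{\beta}\notin\proj_2(C^*)$, then separating $-\mbfs{\beta}$ from the convex cone $\proj_2(C^*)$ yields a nonzero $\mbfs{u}$ with $(\mbfs{0},\mbfs{u})\in C$, contradicting $C\cap(\{\mbfs{0}\}\times\mbb{R}^m)=\{\mbfs{0}\}$; hence $A_{\mbfs{\beta}}\neq\emptyset$. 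Geometrically, the correct $\Gamma(\mbfs{\beta})$ should be read off a contact point $(\mbfs{x}^0,\mbfs{y}^0)\in\partial C\cap\partial Q$ with $\mbfs{y}^0=\|\mbfs{y}^0\|\,\mbfs{\beta}$: there the active supporting inequality $\mbfs{a}^\T\mbfs{x}^0=\mbfs{\beta}^\T\mbfs{y}^0=\|\mbfs{y}^0\|=\|\mbfs{x}^0\|$ together with $\mbfs{a}^\T\mbfs{x}^0\le\|\mbfs{a}\|\,\|\mbfs{x}^0\|$ pins $\|\mbfs{a}\|=1$, and setting $\Gamma(\mbfs{\beta})=\mbfs{x}^0/\|\mbfs{x}^0\|$ gives the desired element of $A_{\mbfs{\beta}}\cap D^n$.

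The main obstacle is exactly this last step: guaranteeing a genuine unit selection $\Gamma(\mbfs{\beta})\in D^n$ simultaneously for \emph{every} $\mbfs{\beta}\in D^m$. Mere membership $\mbfs{a}\in A_{\mbfs{\beta}}$ does not let one rescale to norm $1$ (rescaling only preserves validity when $\mbfs{\beta}^\T\mbfs{y}\ge 0$), so one cannot normalize by hand; instead the norm must be forced to equal $1$ by the geometry. I expect the heart of the argument to be showing that a maximal $Q$-free cone admits such a contact point in \emph{every} direction $\mbfs{\beta}$ — any direction without one would leave a gap between $\partial C$ and $\partial Q$ that could be used to strictly enlarge $C$ while remaining $Q$-free, contradicting maximality. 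Once this is established (and a single-valued choice is fixed where the supporting hyperplane is not unique), the resulting $\Gamma:D^m\to D^n$ satisfies $C\subseteq C_\Gamma$, and the reduction of the first paragraph yields $C=C_\Gamma$.
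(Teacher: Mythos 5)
Your overall frame coincides with the paper's: produce, for each $\mbfs{\beta}\in D^m$, a unit vector $\Gamma(\mbfs{\beta})$ with $\Gamma(\mbfs{\beta})^\T\mbfs{x}-\mbfs{\beta}^\T\mbfs{y}\ge 0$ valid on $C$, and then let Remark~\ref{rmkQFree} plus maximality force $C=C_\Gamma$. The genuine gap is exactly where you place it: the unit-norm selection. You reduce it to the existence, for \emph{every} direction $\mbfs{\beta}$, of a nonzero contact point $(\mbfs{x}^0,\mbfs{y}^0)\in C\cap\partial Q$ with $\mbfs{y}^0$ parallel to $\mbfs{\beta}$, and you offer only the heuristic that ``any direction without one would leave a gap \ldots that could be used to strictly enlarge $C$.'' That heuristic does not convert into an argument: $C$ is cut out by infinitely many inequalities, so relaxing the one indexed by $\mbfs{\beta}$ need not enlarge the set at all (it may be implied by the others), and even when it does, certifying that the enlargement stays $Q$-free requires controlling all the other slices $Q_{\mbfs{\beta}'}$ simultaneously. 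The contact-point claim is true a posteriori (once $C=C_\Gamma$ with $\Gamma$ non-expansive, $(\Gamma(\mbfs{\beta}),\mbfs{\beta})$ is such a point), but proving it up front carries essentially the full difficulty of the theorem. The paper sidesteps it entirely: Lemma~\ref{lemPolar} (via Theorem~\ref{Rockafellarpatched}) yields a separating inequality with $\|\gamma(\mbfs{\beta})\|\le 1$; then the inequalities for $\mbfs{\beta}$ and $-\mbfs{\beta}$ are combined with weights $\lambda+1$ and $\lambda$, the observation that $\gamma(\mbfs{\beta})+\gamma(-\mbfs{\beta})\neq\mbfs{0}$ (else $C$ lies in a hyperplane, contradicting full-dimensionality) shows the norm of the resulting $\mbfs{x}$-coefficient sweeps continuously from $\|\gamma(\mbfs{\beta})\|\le 1$ to $\infty$, and the intermediate value theorem pins it at exactly $1$. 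That algebraic normalization is the missing idea in your write-up.

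Two secondary issues. First, your opening reduction to ``$C$ is a cone'' is both unnecessary (the paper never uses it) and under-justified: the fact that each dilate $\lambda C$ is $Q$-free does not imply that $\cl(\cone(C))$ is $Q$-free, because a point of $\intr(\cone(C))$ need not lie in $\intr(\lambda C)$ for any $\lambda$ (consider $C=\{(x,y):y\ge x^2+1\}$ in the plane, whose conic hull's interior is the open upper half-plane while $\bigcup_{\lambda>0}\lambda\,\intr(C)=\{y>2|x|\}$). Second, your nonemptiness argument for the slice $A_{\mbfs{\beta}}$ separates $-\mbfs{\beta}$ from $\proj_2(C^*)$, but projections of closed cones need not be closed, so this only places $-\mbfs{\beta}$ in the closure of the projection; moreover, membership $(\mbfs{a},-\mbfs{\beta})\in C^*$ alone gives no upper bound $\|\mbfs{a}\|\le 1$ — that bound comes from requiring the inequality to be reversed on $Q_{\mbfs{\beta}}$, which is how the paper obtains it through separation of the convex sets $C$ and $Q_{\mbfs{\beta}}$ together with Lemma~\ref{lemPolar}.
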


\begin{rmk}{\bf(Lower-dimensional maximal $Q$-free sets are always hyperplanes)}\label{rmkLowDim}
As with many of our results, Theorem~\ref{thmStdForm} focuses on sets that are {\it full-dimensional}. 
One reason why we emphasize full-dimensionality is because lower-dimensional maximal $Q$-free sets are necessarily hyperplanes. 
Indeed, a lower-dimensional maximal $Q$-free sets is contained in a hyperplane $H$, which has empty interior and is therefore trivially $Q$-free.
\end{rmk}

In light of Theorem~\ref{thmStdForm}, the question of characterizing full-dimensional maximal $Q$-free sets can be reduced to characterizing the functions $\Gamma$ corresponding to maximal $C_{\Gamma}$.
We demonstrate that a meaningful property in this direction is the `expansivity' of $\Gamma$.

\begin{definition}{\bf(Expansive, Non-expansive, isometric, strictly non-expansive)}
Let $\Gamma: D^m \to D^n$.
Two points $\mbfs{\beta}, \mbfs{\beta}' \in D^m$ are
\begin{itemize}[leftmargin = *]
\item {\bf expansive} if $\|\Gamma(\mbfs{\beta}) - \Gamma(\mbfs{\beta}')\| > \|\mbfs{\beta} - \mbfs{\beta}'\|$.
\item {\bf isometric} if $\|\Gamma(\mbfs{\beta}) - \Gamma(\mbfs{\beta}')\| = \|\mbfs{\beta} - \mbfs{\beta}'\|$.
\item {\bf non-expansive} if $\|\Gamma(\mbfs{\beta}) - \Gamma(\mbfs{\beta}')\| \le \|\mbfs{\beta} - \mbfs{\beta}'\|$.
\item {\bf strictly non-expansive} $\|\Gamma(\mbfs{\beta}) - \Gamma(\mbfs{\beta}')\| < \|\mbfs{\beta} - \mbfs{\beta}'\|$.
\end{itemize}
The function $\Gamma$ is {\bf non-expansive} (respectively, {\bf strictly non-expansive}) if every pair of distinct points $\mbfs{\beta}, \mbfs{\beta}' \in D^m$ are non-expansive (respectively, strictly non-expansive).
\end{definition}

Observe that for a function $\Gamma: D^m \to D^n$, two points $\mbfs{\beta}, \mbfs{\beta}' \in D^m$ are non-expansive (respectively, expansive, isometric, or strictly non-expansive) if and only if $\mbfs{\beta}{}^\T\mbfs{\beta}' \le \Gamma(\mbfs{\beta})^\T\Gamma(\mbfs{\beta}')$ (respectively, $\mbfs{\beta}{}^\T\mbfs{\beta}' > \Gamma(\mbfs{\beta})^\T\Gamma(\mbfs{\beta}')$, $\mbfs{\beta}{}^\T\mbfs{\beta}' = \Gamma(\mbfs{\beta})^\T\Gamma(\mbfs{\beta}')$,  or $\mbfs{\beta}{}^\T\mbfs{\beta}' < \Gamma(\mbfs{\beta})^\T\Gamma(\mbfs{\beta}')$) because $\mbfs{\beta}, \mbfs{\beta}' \in D^m$ and $\Gamma(\mbfs{\beta}), \Gamma(\mbfs{\beta}') \in D^n$.

\begin{rmk}\label{rmk:contracting}
The reader may be familiar with the notion of a {\bf contracting} function. 
The function $\Gamma$ is contracting if there exists $L<1$ such that
\[\|\Gamma(\mbfs{\beta}^1) - \Gamma(\mbfs{\beta}^2)\| \leq L \|\mbfs{\beta}^1 - \mbfs{\beta}^2\| \quad \forall \ \mbfs{\beta}^1, \mbfs{\beta}^2 \in D^m.\]
If $\Gamma$ is contracting, then it is strictly non-expansive. 
However, the converse is not true. 
Consider $n=m=2$ and define $\Gamma$ using polar coordinates: for $\theta\in[0,2\pi]$, we define $\gamma(\theta) = - \theta(\theta - 2\pi)/(4\pi)$ and define $\Gamma$ such that 
\[
\beta(\theta) := (\cos(\theta),\sin(\theta)) \mapsto \Gamma(\beta(\theta)) := (\cos(\gamma(\theta)),\sin(\gamma(\theta))).
\]
$\Gamma$ is strictly non-expansive; we omit a full technical proof of this, but we illustrate this fact in Figure \ref{fig:nonpolyex-theta}.
Let us now argue that $\Gamma$ is not contracting.
Take $\mbfs{\beta}^1= (1,0)$ and $\mbfs{\beta}^2(\theta) = (\cos(\theta), \sin(\theta))$. 
Note that $\Gamma(\mbfs{\beta}^1) = (1,0)$ and thus
\[
\lim_{\theta\to 0} \frac{\Gamma(\mbfs{\beta}^1)^\T \Gamma(\mbfs{\beta}^2)}{\mbfs{\beta}^1{}^\T \mbfs{\beta}^2} = \lim_{\theta\to 0} \frac{\cos(- \theta(\theta - 2\pi)/(4\pi))}{\cos(\theta)} =  1.
\]
This implies $\lim_{\theta\to 0} \|\Gamma(\mbfs{\beta}^1) - \Gamma(\mbfs{\beta}^2) \|/\|\mbfs{\beta}^1 -  \mbfs{\beta}^2\| = 1 $, and thus $\Gamma$ cannot be contracting.
\end{rmk}

\begin{figure}[t]
		\centering
		\includegraphics[scale=0.30]{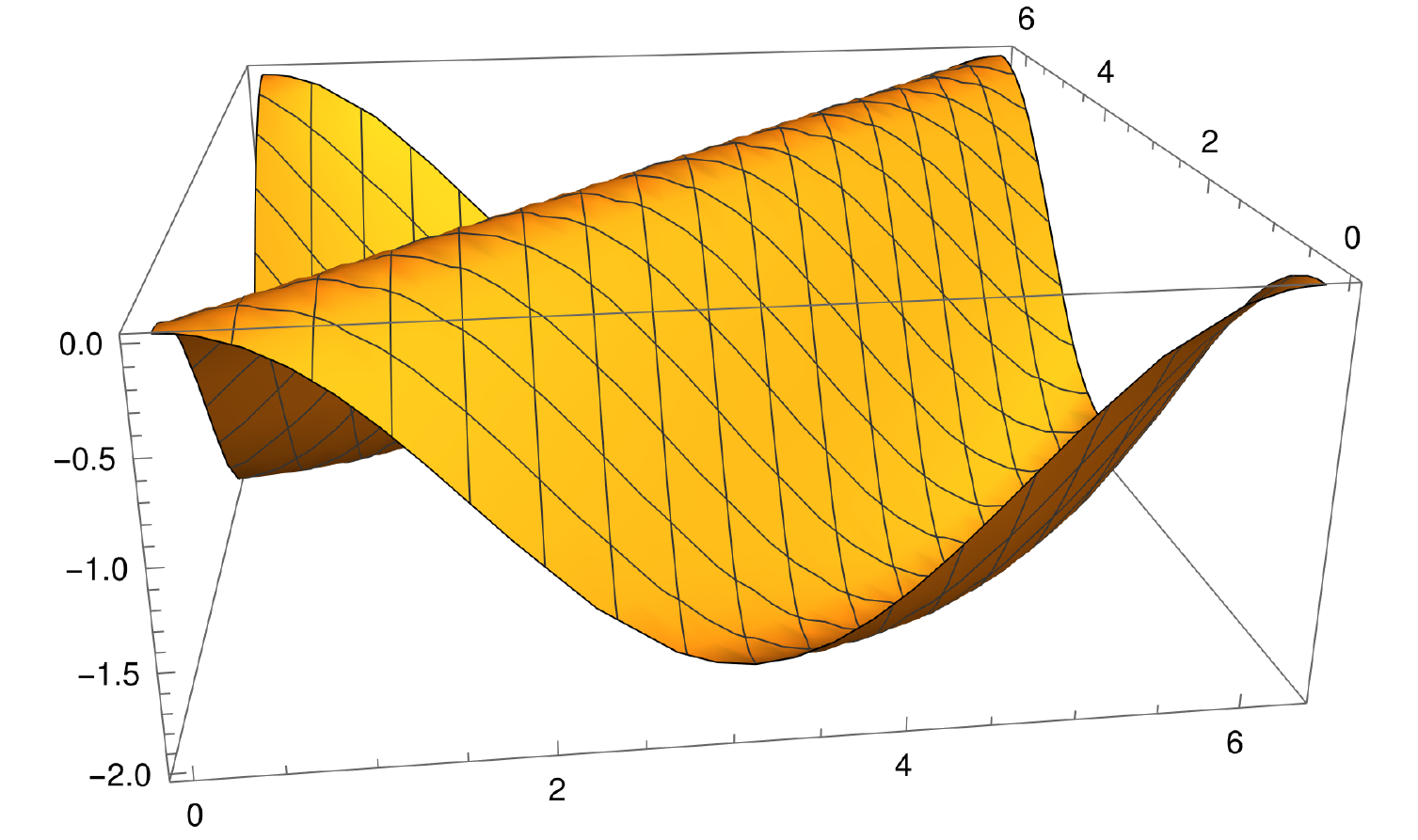}
		\caption{Plot of $\beta(\theta_1)^\T\beta(\theta_2) - \Gamma(\beta(\theta_1))^\T\Gamma(\beta(\theta_2))$ for $\Gamma$ defined in Remark \ref{rmk:contracting}. This quantity is non-positive and takes the value 0 only when $\theta_1 = \theta_2$ (mod $2\pi$), which shows $\Gamma$ is strictly non-expansive. }
		\label{fig:nonpolyex-theta}
\end{figure}

Our second main result characterizes functions $\Gamma$ that yield full-dimensional $Q$-free sets.

\begin{theorem}[Characterizing maximality of $C_\Gamma$]\label{thm:continuous}
Let $\Gamma: D^m \to D^n$ and define $C_\Gamma$ as in \eqref{Cgammadef}. 
The set $C_{\Gamma}$ is a full-dimensional maximal $Q$-free set if and only if $\Gamma$ is non-expansive and $(\mbfs{0}, \mbfs{0})\not\in \conv(\{(\Gamma(\mbfs{\beta}),-\mbfs{\beta})\,:\ \mbfs{\beta}\in D^m\})$.
\end{theorem}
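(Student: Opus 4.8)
The plan is to reduce the theorem to two clean reformulations and then prove each implication by a direct geometric argument, with the quadratic $\|\mbfs{x}\|^2-\|\mbfs{y}\|^2$ doing all the work. Throughout I write $\mbfs{z}_{\mbfs{\beta}} := (\Gamma(\mbfs{\beta}),\mbfs{\beta})$ and $K := \{(\Gamma(\mbfs{\beta}),-\mbfs{\beta}) : \mbfs{\beta}\in D^m\}$, noting that $\mbfs{z}_{\mbfs{\beta}}\in\partial Q$ since $\|\Gamma(\mbfs{\beta})\|=\|\mbfs{\beta}\|=1$, and that $\mbfs{z}_{\mbfs{\beta}}$ sits on the bounding hyperplane of the $\mbfs{\beta}$-inequality defining $C_\Gamma$. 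The first reformulation is that, by the inner-product criterion recorded just after the definition of expansiveness, $\Gamma$ is non-expansive if and only if $\mbfs{z}_{\mbfs{\beta}}\in C_\Gamma$ for every $\mbfs{\beta}\in D^m$ (the constraint of $C_\Gamma$ at $\mbfs{\beta}'$ evaluated at $\mbfs{z}_{\mbfs{\beta}}$ is exactly $\Gamma(\mbfs{\beta}')^\T\Gamma(\mbfs{\beta})-\mbfs{\beta}'^\T\mbfs{\beta}\ge 0$). The second reformulation is that $C_\Gamma$ is full-dimensional if and only if $(\mbfs{0},\mbfs{0})\notin\conv(K)$: if $\mbfs{0}\in\conv(K)$, a finite Carath\'eodory combination $\sum_i\lambda_i(\Gamma(\mbfs{\beta}_i),-\mbfs{\beta}_i)=\mbfs{0}$ forces each nonnegative term $\Gamma(\mbfs{\beta}_i)^\T\mbfs{x}-\mbfs{\beta}_i^\T\mbfs{y}$ to vanish on all of $C_\Gamma$, trapping $C_\Gamma$ in a proper hyperplane; conversely, when $\Gamma$ is non-expansive (hence $1$-Lipschitz, hence continuous) $K$ is compact, so $\conv(K)$ is compact and $\mbfs{0}\notin\conv(K)$ can be strictly separated, producing an interior point of $C_\Gamma$.

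For sufficiency, assume $\Gamma$ is non-expansive and $\mbfs{0}\notin\conv(K)$. Full-dimensionality follows from the separation just described. For maximality, suppose $C_\Gamma\subsetneq C'$ with $C'$ closed, convex, and $Q$-free, and pick $\mbfs{w}=(\mbfs{w}_x,\mbfs{w}_y)\in C'\setminus C_\Gamma$, so some $\mbfs{\beta}_0$ gives $\Gamma(\mbfs{\beta}_0)^\T\mbfs{w}_x-\mbfs{\beta}_0^\T\mbfs{w}_y<0$. Non-expansivity gives $\mbfs{z}_{\mbfs{\beta}_0}\in C_\Gamma\subseteq C'$, and I would expand $\|\mbfs{p}_x\|^2-\|\mbfs{p}_y\|^2$ along the segment $\mbfs{p}(t)=(1-t)\mbfs{z}_{\mbfs{\beta}_0}+t\mbfs{w}$: the $(1-t)^2$ coefficient multiplies $\|\Gamma(\mbfs{\beta}_0)\|^2-\|\mbfs{\beta}_0\|^2=0$, leaving the term $2t(1-t)(\Gamma(\mbfs{\beta}_0)^\T\mbfs{w}_x-\mbfs{\beta}_0^\T\mbfs{w}_y)<0$, which dominates the $O(t^2)$ term for small $t$. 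Hence $\mbfs{p}(t)\in\intr Q\cap C'$ for small $t>0$. Combining $\mbfs{p}(t)$ with any interior point of $C'$ (one exists because $C'\supseteq C_\Gamma$ is full-dimensional) yields a point of $\intr(C')\cap\intr Q$, contradicting $Q$-freeness of $C'$.

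For necessity, assume $C_\Gamma$ is full-dimensional and maximal. The full-dimensional half of the second reformulation already yields $\mbfs{0}\notin\conv(K)$. To get non-expansivity it suffices, by the first reformulation, to show $\mbfs{z}_{\mbfs{\beta}_0}\in C_\Gamma$ for every $\mbfs{\beta}_0$. Suppose not; then I would show the strictly larger closed convex set $C':=\overline{\conv}(C_\Gamma\cup\{\mbfs{z}_{\mbfs{\beta}_0}\})$ is still $Q$-free, contradicting maximality. The heart of this is the identity, for $\mbfs{c}\in C_\Gamma$ and $\mbfs{p}=(1-t)\mbfs{c}+t\mbfs{z}_{\mbfs{\beta}_0}$ with $t\in[0,1]$,
\[ \|\mbfs{p}_x\|^2-\|\mbfs{p}_y\|^2=(1-t)^2(\|\mbfs{c}_x\|^2-\|\mbfs{c}_y\|^2)+2t(1-t)\bigl(\Gamma(\mbfs{\beta}_0)^\T\mbfs{c}_x-\mbfs{\beta}_0^\T\mbfs{c}_y\bigr), \]
where the $t^2$ term vanishes because $\mbfs{z}_{\mbfs{\beta}_0}\in\partial Q$. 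Both surviving terms are nonnegative: the first because $\mbfs{c}\in C_\Gamma$ forces $\|\mbfs{c}_x\|\ge\|\mbfs{c}_y\|$ (apply the constraint at $\mbfs{\beta}=\mbfs{c}_y/\|\mbfs{c}_y\|$ together with Cauchy--Schwarz), and the second because it is exactly the $\mbfs{\beta}_0$-constraint of $C_\Gamma$ evaluated at $\mbfs{c}$. Thus no point of $\conv(C_\Gamma\cup\{\mbfs{z}_{\mbfs{\beta}_0}\})$ lies in $\intr Q$, and passing to the closure preserves this (the closure of a set disjoint from the open set $\intr Q$ stays disjoint); a short argument upgrades $C'\cap\intr Q=\emptyset$ to $\intr(C')\cap Q=\emptyset$, i.e.\ $Q$-freeness.

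The main obstacle is the necessity of non-expansivity: the nonobvious point is that enlarging $C_\Gamma$ by the single boundary point $\mbfs{z}_{\mbfs{\beta}_0}$ keeps the entire convex hull $Q$-free, even though the region $\{\|\mbfs{x}\|\ge\|\mbfs{y}\|\}$ is itself nonconvex. This is precisely what the displayed identity buys, and it is special to $\mbfs{z}_{\mbfs{\beta}_0}$ lying on $\partial Q$ and to $\mbfs{c}$ satisfying the very constraint indexed by $\mbfs{\beta}_0$; the same expansion, read in the opposite direction, drives the sufficiency argument. The remaining work is bookkeeping: promoting ``no point of the convex hull is in $\intr Q$'' to genuine $Q$-freeness $\intr(C')\cap Q=\emptyset$, and confirming full-dimensionality of the competitor so that an interior point is available.
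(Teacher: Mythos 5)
Your proof is correct. The necessity half essentially coincides with the paper's argument: your $\mbfs{0}\notin\conv(\cdot)$ step is Lemma~\ref{lemNoZeroSameCoeff}, and your enlargement $\overline{\conv}(C_\Gamma\cup\{(\Gamma(\mbfs{\beta}_0),\mbfs{\beta}_0)\})$ is a minor variant of the paper's $C_\Gamma+\cone((\Gamma(\mbfs{\beta}^1),\mbfs{\beta}^1))$ in Lemma~\ref{lemSMax}, driven by the same quadratic expansion. Your sufficiency half, however, takes a genuinely different and considerably more elementary route. The paper proves sufficiency (Lemma~\ref{lemNMax}) by invoking its general exposing-sequence criterion (Theorem~\ref{thmMax}): it represents $C_\Gamma$ via the exposed rays of its polar cone (Lemma~\ref{lemPointed}, resting on Rockafellar's Theorem 17.3, Carath\'eodory, and compactness arguments) and then, for each exposed inequality, constructs an explicit sequence $(\mbfs{x}^t,\mbfs{y}^t)\in Q$ with a delicate rate analysis forcing the separating inequalities to converge. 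You instead argue directly that any closed convex $C'\supsetneq C_\Gamma$ meets $\intr(Q)$: a point $\mbfs{w}\in C'\setminus C_\Gamma$ violates some constraint indexed by $\mbfs{\beta}_0$; non-expansivity puts $(\Gamma(\mbfs{\beta}_0),\mbfs{\beta}_0)$ in $C_\Gamma\cap\partial Q$; and the segment toward $\mbfs{w}$ enters $\intr(Q)$ immediately because $\|\mbfs{x}\|^2-\|\mbfs{y}\|^2$ vanishes at $(\Gamma(\mbfs{\beta}_0),\mbfs{\beta}_0)$ with strictly negative directional derivative $2(\Gamma(\mbfs{\beta}_0)^\T\mbfs{w}_x-\mbfs{\beta}_0^\T\mbfs{w}_y)<0$, the $O(t^2)$ term being harmless. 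This neatly sidesteps the obstruction that motivated exposing sequences: the points $(\Gamma(\mbfs{\beta}),\mbfs{\beta})$ may lie on many facets and thus fail to be exposing points in the sense of Mu\~noz--Serrano (cf.\ Example~\ref{ex2}), but your argument needs only the single violated inequality, not any uniqueness. What the paper's longer route buys is Theorem~\ref{thmMax} itself, a characterization valid for arbitrary closed $S$; your argument is tailored to the quadratic structure (the gradient of the defining quadratic at $(\Gamma(\mbfs{\beta}_0),\mbfs{\beta}_0)$ is parallel to the normal of the $\mbfs{\beta}_0$-constraint), but for Theorem~\ref{thm:continuous} alone it is complete and shorter. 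In a final write-up you should make explicit that $C_\Gamma$ is $Q$-free to begin with (Remark~\ref{rmkQFree}) and spell out the routine upgrade from ``$C'$ meets $\intr(Q)$'' to ``$\intr(C')$ meets $Q$'', both of which you have correctly sketched.
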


The backwards direction in Theorem~\ref{thm:continuous} generalizes the work in~\cite[Theorem 3]{MPS2023}, where the additional assumption that $C_{\Gamma}$ is polyhedral is needed.
Furthermore, the result settles the conjecture stated prior to Theorem 3 in their work. 
The forward direction in Theorem~\ref{thm:continuous} is new to this work, and it allows for a complete characterization of maximality.
Note that Theorem~\ref{thm:continuous} implies that a discontinuous $\Gamma$ cannot yield a maximal $Q$-free set.

For the case when $\Gamma$ is strictly non-expansive, we can prove that $C_{\Gamma}$ is a full-dimensional maximal $Q$-free set without
any extra condition.
Indeed, if $\Gamma$ is strictly non-expansive, then for each $\mbfs{\beta}'$, the point $(\Gamma(\mbfs{\beta}'), \mbfs{\beta}')$ strictly satisfies every defining inequality for $C_{\Gamma}$ except $\Gamma(\mbfs{\beta}'){}^\top \mbfs{x} - \mbfs{\beta}'{}^\T \mbfs{y} \ge 0$; the set $\{(\Gamma(\mbfs{\beta}), \mbfs{\beta}):\ \mbfs{\beta} \in D^m\}$ is compact because $\Gamma$ is continuous; thus, there is some $\delta > 0$ such that $\Gamma(\mbfs{\beta}){}^\top \Gamma(\mbfs{\beta}') - \mbfs{\beta}{}^\top \mbfs{\beta}' \ge \delta $; hence, the midpoint of distinct points $\mbfs{\beta}', \mbfs{\beta}'' \in D^m$ satisfies all inequalities strictly and with some slack; thus, $C_{\Gamma}$ is full-dimensional. 
In Lemma~\ref{lemNoZeroSameCoeff} we show that if $C_{\Gamma}$ is full-dimensional and $\Gamma$ is continuous, then $(\mbfs{0}, \mbfs{0})\not\in \conv(\{(\Gamma(\mbfs{\beta}),-\mbfs{\beta})\,:\ \mbfs{\beta}\in D^m\})$.
Together with Theorem~\ref{thm:continuous}, we have arrived at the following corollary.

\begin{corollary}[Strictly non-expansive implies full-dimensionality]\label{corMain2}
Let $\Gamma: D^m \to D^n$ and define $C_\Gamma$ as in \eqref{Cgammadef}.
If $\Gamma$ is strictly non-expansive, then $C_\Gamma$ is a full-dimensional maximal $Q$-free set.
\end{corollary}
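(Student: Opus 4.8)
The plan is to reduce the corollary to Theorem~\ref{thm:continuous} by verifying its two hypotheses: that $\Gamma$ is non-expansive, and that $(\mbfs{0},\mbfs{0}) \notin \conv(\{(\Gamma(\mbfs{\beta}),-\mbfs{\beta}) : \mbfs{\beta} \in D^m\})$. The first hypothesis is immediate, since strict non-expansivity is a special case of non-expansivity. For the second, I would invoke Lemma~\ref{lemNoZeroSameCoeff}, which yields exactly this conclusion provided $C_\Gamma$ is full-dimensional and $\Gamma$ is continuous. Continuity is free: strict non-expansivity makes $\Gamma$ a $1$-Lipschitz map on $D^m$. Thus the entire proof reduces to establishing that $C_\Gamma$ is full-dimensional.

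To prove full-dimensionality I would exhibit a point in $\intr(C_\Gamma)$, that is, a point $(\mbfs{x},\mbfs{y})$ for which the continuous map $\mbfs{\beta} \mapsto \Gamma(\mbfs{\beta})^\T \mbfs{x} - \mbfs{\beta}^\T \mbfs{y}$ is strictly positive on the compact set $D^m$; compactness then supplies a uniform positive slack $\delta$, and since each defining functional has gradient $(\Gamma(\mbfs{\beta}),-\mbfs{\beta})$ of norm $\sqrt{2}$ in $(\mbfs{x},\mbfs{y})$, a ball of radius $\delta/\sqrt{2}$ about the point remains inside $C_\Gamma$. The naive choice $(\mbfs{x},\mbfs{y}) = (\Gamma(\mbfs{\beta}'),\mbfs{\beta}')$ almost works: by the inner-product reformulation of strict non-expansivity recorded after the definition, $\Gamma(\mbfs{\beta})^\T \Gamma(\mbfs{\beta}') - \mbfs{\beta}^\T \mbfs{\beta}' > 0$ for every $\mbfs{\beta} \neq \mbfs{\beta}'$, but the constraint indexed by $\mbfs{\beta}'$ itself holds only with equality. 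The remedy is to fix two distinct witnesses $\mbfs{\beta}',\mbfs{\beta}'' \in D^m$ and test the midpoint $\tfrac{1}{2}(\Gamma(\mbfs{\beta}'),\mbfs{\beta}') + \tfrac{1}{2}(\Gamma(\mbfs{\beta}''),\mbfs{\beta}'')$: the value at $\mbfs{\beta}$ is the average of the two individual slacks, and since these vanish only at $\mbfs{\beta}'$ and $\mbfs{\beta}''$ respectively, and never simultaneously because $\mbfs{\beta}' \neq \mbfs{\beta}''$, the average is strictly positive throughout $D^m$.

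I expect the midpoint argument to be the only genuine obstacle; everything else is bookkeeping on top of results already in hand. The delicate point is precisely that a single extreme witness lies on the boundary of $C_\Gamma$ rather than in its interior, so strict non-expansivity alone does not directly furnish an interior point. Averaging two witnesses turns the lone equality constraint of each into a strict inequality supplied by the other, and strict non-expansivity is exactly what guarantees the cross terms $\Gamma(\mbfs{\beta}')^\T \Gamma(\mbfs{\beta}'') - (\mbfs{\beta}')^\T \mbfs{\beta}''$ are positive. Once full-dimensionality is secured, Lemma~\ref{lemNoZeroSameCoeff} delivers the second hypothesis and Theorem~\ref{thm:continuous} closes the argument.
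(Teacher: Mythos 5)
Your proposal is correct and follows essentially the same route as the paper: strict non-expansivity gives non-expansivity and continuity for free, full-dimensionality is established via the midpoint of two distinct witnesses $(\Gamma(\mbfs{\beta}'),\mbfs{\beta}')$ and $(\Gamma(\mbfs{\beta}''),\mbfs{\beta}'')$ together with compactness of $D^m$, and then Lemma~\ref{lemNoZeroSameCoeff} and Theorem~\ref{thm:continuous} finish the argument. The midpoint construction you identify as the key step is exactly the one the paper uses.
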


\noindent Corollary~\ref{corMain2} generalizes~\cite[Theorem 2.1]{MS2022}, where $\Gamma$ is a constant function.
Example \ref{ex1} in Section \ref{secExamples} illustrates the construction of a maximal $Q$-free set using a non-constant $\Gamma$ function that is strictly non-expansive.

Another result that we find to be of independent interest when studying $Q$-free polyhedra is the following characterization of polyhedrality depending on $\Gamma$.

\begin{theorem}[A characterization of polyhedrality]\label{thmCpolyhedral}
Let $\Gamma: D^m \to D^n$ be non-expansive and define $C_\Gamma$ as in \eqref{Cgammadef}.
$C_\Gamma$ is a polyhedron if and only if there is a finite set $I \subseteq D^m$ such that for every $\overline{\mbfs{\beta}} \in D^m$ there exists a set $J\subseteq I$ of pairwise isometric points satisfying $\overline{\mbfs{\beta}} \in \cone(J)$.
Moreover, $C_\Gamma =  \{(\mbfs{x},\mbfs{y}) \in \mbb{R}^n \times \mbb{R}^m : \Gamma(\mbfs{\beta})^\T \mbfs{x} - \mbfs{\beta} ^\T \mbfs{y} \ge 0 ~~ \forall ~ \mbfs{\beta} \in I\}$.
\end{theorem}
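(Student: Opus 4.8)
The plan is to encode $C_\Gamma$ through the normals of its defining halfspaces. For $\mbfs{\beta}\in D^m$ write $n_{\mbfs{\beta}}:=(\Gamma(\mbfs{\beta}),-\mbfs{\beta})\in\mbb{R}^n\times\mbb{R}^m$ and set $N:=\{n_{\mbfs{\beta}}:\mbfs{\beta}\in D^m\}$. Since $\Gamma(\mbfs{\beta})\in D^n$ and $\mbfs{\beta}\in D^m$, every $n_{\mbfs{\beta}}$ has norm $\sqrt 2$, and the $\mbfs{y}$-block shows that $\mbfs{\beta}\mapsto n_{\mbfs{\beta}}$ is injective. As $\Gamma$ is non-expansive it is $1$-Lipschitz, hence continuous, so $N$ is compact; and $C_\Gamma=\{\mbfs{w}:n_{\mbfs{\beta}}^\T\mbfs{w}\ge 0\ \forall\mbfs{\beta}\in D^m\}$ is exactly the dual cone of $K:=\overline{\cone}(N)$. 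The engine for both directions is the following claim, which I would isolate as a lemma: if $\overline{\mbfs{\beta}}\in D^m$ satisfies $\overline{\mbfs{\beta}}=\sum_i\lambda_i\mbfs{\beta}^i$ with $\lambda_i\ge 0$ and the $\mbfs{\beta}^i\in D^m$ pairwise isometric, then $\Gamma(\overline{\mbfs{\beta}})=\sum_i\lambda_i\Gamma(\mbfs{\beta}^i)$. Indeed, setting $\mbfs{v}:=\sum_i\lambda_i\Gamma(\mbfs{\beta}^i)$ and expanding $\|\mbfs{v}\|^2$ using $\Gamma(\mbfs{\beta}^i)^\T\Gamma(\mbfs{\beta}^j)=(\mbfs{\beta}^i)^\T\mbfs{\beta}^j$ gives $\|\mbfs{v}\|^2=\|\overline{\mbfs{\beta}}\|^2=1$, while non-expansivity gives $\Gamma(\overline{\mbfs{\beta}})^\T\mbfs{v}=\sum_i\lambda_i\Gamma(\overline{\mbfs{\beta}})^\T\Gamma(\mbfs{\beta}^i)\ge\sum_i\lambda_i\overline{\mbfs{\beta}}^\T\mbfs{\beta}^i=1$; since $\Gamma(\overline{\mbfs{\beta}})$ and $\mbfs{v}$ are unit vectors, Cauchy--Schwarz forces $\Gamma(\overline{\mbfs{\beta}})=\mbfs{v}$, i.e.\ $n_{\overline{\mbfs{\beta}}}=\sum_i\lambda_i n_{\mbfs{\beta}^i}$.

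For the \emph{if} direction, assume the covering property with finite $I$. The inclusion $C_\Gamma\subseteq\{\mbfs{w}:n_{\mbfs{\beta}}^\T\mbfs{w}\ge 0,\ \mbfs{\beta}\in I\}$ is immediate because $I\subseteq D^m$. Conversely, take $\mbfs{w}$ in the right-hand set and any $\overline{\mbfs{\beta}}\in D^m$, and pick the pairwise-isometric $J\subseteq I$ with $\overline{\mbfs{\beta}}=\sum_{\mbfs{\beta}\in J}\lambda_{\mbfs{\beta}}\mbfs{\beta}$, $\lambda_{\mbfs{\beta}}\ge 0$. The lemma yields $n_{\overline{\mbfs{\beta}}}=\sum_{\mbfs{\beta}\in J}\lambda_{\mbfs{\beta}}n_{\mbfs{\beta}}$, so $n_{\overline{\mbfs{\beta}}}^\T\mbfs{w}=\sum_{\mbfs{\beta}\in J}\lambda_{\mbfs{\beta}}\,n_{\mbfs{\beta}}^\T\mbfs{w}\ge 0$. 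Hence the two sets agree, which simultaneously shows $C_\Gamma$ is a polyhedron and proves the ``moreover'' description.

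For the \emph{only if} direction I would use conic duality: since $C_\Gamma=K^*$ is polyhedral and $K$ is a closed convex cone, $K$ is itself polyhedral. In the pointed case $\mbfs{0}\notin\conv(N)$, the compact set $B:=\conv(N)$ avoids the origin, so $K=\cone(B)$ is a pointed polyhedral cone whose finitely many extreme rays are spanned by the extreme points of $B$; by Milman's theorem these extreme points lie in $N$, hence each extreme ray is $\cone(\{n_{\mbfs{\beta}}\})$ for some $\mbfs{\beta}\in D^m$. Collecting these finitely many $\mbfs{\beta}$ into $I$ gives $K=\cone(\{n_{\mbfs{\beta}}:\mbfs{\beta}\in I\})$, and dualizing gives the ``moreover'' description. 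To verify the covering property, fix $\overline{\mbfs{\beta}}\in D^m$; then $n_{\overline{\mbfs{\beta}}}\in N\subseteq K$, so $n_{\overline{\mbfs{\beta}}}=\sum_{\mbfs{\beta}\in I}\lambda_{\mbfs{\beta}}n_{\mbfs{\beta}}$ with $\lambda_{\mbfs{\beta}}\ge 0$; let $J$ be its support. The $\mbfs{y}$-block gives $\overline{\mbfs{\beta}}=\sum_{\mbfs{\beta}\in J}\lambda_{\mbfs{\beta}}\mbfs{\beta}\in\cone(J)$ and the $\mbfs{x}$-block gives $\Gamma(\overline{\mbfs{\beta}})=\sum_{\mbfs{\beta}\in J}\lambda_{\mbfs{\beta}}\Gamma(\mbfs{\beta})$. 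Expanding the equal norms $\|\Gamma(\overline{\mbfs{\beta}})\|^2=\|\overline{\mbfs{\beta}}\|^2=1$ as double sums and invoking the term-wise inequality $\mbfs{\beta}^\T\mbfs{\beta}'\le\Gamma(\mbfs{\beta})^\T\Gamma(\mbfs{\beta}')$ from non-expansivity, equality of the two sums forces term-wise equality on the support, so the points of $J$ are pairwise isometric.

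The hard part is exactly this forward direction: extracting a \emph{finite} index set $I\subseteq D^m$ of genuine parameters from mere polyhedrality of $C_\Gamma$. The clean extraction above rests on pointedness of $K$, equivalently $\mbfs{0}\notin\conv(N)$; the degenerate case $\mbfs{0}\in\conv(N)$ makes $K$ carry a nontrivial lineality space and, via Lemma~\ref{lemNoZeroSameCoeff}, corresponds to a non-full-dimensional $C_\Gamma$. This case I would treat separately by splitting $K$ into its lineality space plus pointed part and checking that generators can still be selected within $N$ (the $\pm$ pairs forced into the lineality space are themselves isometric), after which the isometry and covering arguments run verbatim.
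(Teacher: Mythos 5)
Your proposal is correct in substance and follows the same overall strategy as the paper: the backward direction and your ``key lemma'' are exactly the paper's Lemma~\ref{lemIsoSets} (Property~1), proved the same way via the norm computation and Cauchy--Schwarz. Where you deviate is in the forward direction. The paper extracts the finite set $I$ by invoking Lemma~\ref{lemCCinequalities} (which rests on Rockafellar's Theorem~17.3) to conclude that every facet normal of the polyhedral cone $C_\Gamma$ is a conic combination of points of $N=\{(\Gamma(\mbfs{\beta}),-\mbfs{\beta})\}$, hence a single such point when it spans an extreme ray of the polar cone; you instead argue via conic duality that $K=\overline{\cone}(N)$ is polyhedral and use Milman's theorem on $\conv(N)$ to place the extreme rays inside $N$ --- a self-contained alternative that avoids the Rockafellar machinery. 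Your pairwise-isometry argument is also a mild streamlining: you get isometry of all pairs in $J$ from a single double-sum expansion of $\|\Gamma(\overline{\mbfs{\beta}})\|^2=\|\overline{\mbfs{\beta}}\|^2$, whereas the paper first applies Lemma~\ref{lempolyhedraIsometricFacets} to get isometry of $\overline{\mbfs{\beta}}$ with each $\mbfs{\beta}\in J$ and then runs a second expansion; both are valid. The one place your write-up is genuinely incomplete is the degenerate case $(\mbfs{0},\mbfs{0})\in\conv(N)$, which you correctly identify as the obstruction to pointedness of $K$ but only sketch: the assertion that generators of the lineality space ``can still be selected within $N$'' is not obvious (when $\mbfs{0}\in\conv(N)$ the cone $\cone(N)$ need not even be closed, and it is not immediate that a finite subset of $N$ generates $K$), so this would need an actual argument, e.g.\ passing to the quotient by the lineality space. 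You should be aware, however, that the paper's own proof has the same blind spot, since Lemma~\ref{lemCCinequalities} formally requires $(\mbfs{0},\mbfs{0})\notin\conv(N)$; in the nondegenerate case, which is the one of interest for maximal $Q$-free sets, your proof is complete.
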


Examples \ref{ex2} and \ref{ex3} in Section~\ref{secExamples} show the construction of polyhedral maximal $Q$-free sets using a non-expansive $\Gamma$.
We point out one difference between maximal $Q$-free polyhedra and maximal $S$-free polyhedra when $S$ is a lattice.
In Lov\'{a}sz's characterization, the number of facets of a maximal $S$-free polyhedra is upper bounded by a function of the dimension; this is not the case for maximal $Q$-free polyhedra, which can have an arbitrary number of facets 
(see Example \ref{ex3} in Section~\ref{secExamples}).
We also note that Theorem \ref{thmCpolyhedral} can be used to construct, starting from a set $I\subseteq D^m$, 
a function $\Gamma$ that yields a maximal $Q$-free polyhedron (see Example \ref{ex3} for an illustration of this).

Underlying the proof of Theorem~\ref{thm:continuous} is our final main result, which is a general characterization of maximality of $S$-free sets.
We turn once again to the case when $S$ is a lattice to motivate this result.
Lov\'{a}sz proves that if $C$ is maximal $S$-free, then $C$ is a polyhedron and every facet $F$ of $C$ contains a lattice point $\mbfs{z}^F$ in its relative interior. 
The point $\mbfs{z}^F$ is similar to a `blocking point' used to generate maximal $S$-free sets when $S$ is a mixed-integer set through lifting~\cite{BDP2019,CCZ2011,DW2010}.
In order for $C$ to be $S$-free in the lattice setting, each $\mbfs{z}^F$ must be separated from $C$ by a facet defining inequality. 
In fact, $F$ is the unique facet separating $\mbfs{z}^F$ from $C$; thus, in a way $\mbfs{z}^F$ `exposes' $F$. 
The notion of exposing points is considered by Mu\~{n}oz and Serrano~\cite{MS2022}, where they argue that if every inequality defining a $Q$-free set $C$ has an exposing point, then $C$ is maximal. 
However, there are maximal $Q$-free sets defined by inequalities that do not have exposing points; see Example \ref{ex2} in Section \ref{secExamples}.
A generalization of this is the notion of an exposing sequence.

\begin{definition}[Exposing sequence]\label{def:exposingseq}
  Let $C \subseteq \mbb{R}^d$ be a convex set and let $\mbfs{\alpha}^\T \mbfs{x} \le \alpha_0$, with $\mbfs{\alpha} \neq  \mbfs{0}$, be a valid inequality for $C$.
  A sequence $(\mbfs{x}^t)_{t=1}^\infty$ in $\mbb{R}^d$ is an {\bf exposing sequence} for $\mbfs{\alpha}^\T \mbfs{x} \le \alpha_0$ if $\lim_{t\to\infty}(\mbfs{\delta}^t, {\delta_0^t}) = (\mbfs{\alpha}, \alpha_0)$ for every sequence $((\mbfs{\delta}^t, {\delta^t_0}))_{t=1}^\infty$ in ${\mbb{R}}^d \times \mbb{R}$ such that $\|\mbfs{\delta}^t\| = \|\mbfs{\alpha}\|$, $\mbfs{\delta}^t{}^\T \mbfs{x} \le \delta^t_0$ is a valid inequality for $C$, and ${\mbfs{\delta}^t}{}^\T \mbfs{x}^t \ge {\delta^t_0}$ for each $t$.
\end{definition}

\begin{rmk}
Mu\~{n}oz and Serrano~\cite{MS2022} define a notion of `exposing sequence at infinity'. 
This is more restrictive than Definition \ref{def:exposingseq}.
Moreover, it can be shown that an exposing sequence at infinity reduces to an exposing point for $Q$ because $Q$ is a cone. 
\end{rmk}

\begin{theorem} \label{thmMax}
Let $S \subseteq \mbb{R}^d$ be closed, and let $C \subseteq \mbb{R}^d$ be a closed convex full-dimensional $S$-free set.
$C$ is a maximal $S$-free set if and only if there exists a set $I \subseteq \mbb{R}^d\times \mbb{R}$ such that
  \[
    C = \{ \mbfs{x} \in \mbb{R}^d : \mbfs{\alpha}^\T \mbfs{x} \leq \alpha_0 ~~ \forall\  (\mbfs{\alpha}, \alpha_0) \in I \}
  \]
and each $(\mbfs{\alpha}, \alpha_0) \in I$ has an exposing sequence $(\mbfs{x}^t)_{t=1}^\infty$ in $S$.
\end{theorem}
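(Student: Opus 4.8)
The plan is to prove the two directions separately. The backward (sufficiency) direction is a clean separation argument, while the forward (necessity) direction is the substantial one and proceeds by building exposing sequences out of the maximality hypothesis.

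\textbf{Sufficiency.} Suppose $C$ admits the stated representation with a set $I$ in which each $(\mbfs{\alpha},\alpha_0)$ has an exposing sequence in $S$; I would show $C$ is maximal by contradiction. If not, there is a closed convex $S$-free set $C'$ with $C\subsetneq C'$, necessarily full-dimensional since it contains the full-dimensional $C$. Pick $\mbfs{z}\in C'\setminus C$; then $\mbfs{z}$ violates some $(\mbfs{\alpha},\alpha_0)\in I$, i.e. $\mbfs{\alpha}^\T\mbfs{z}>\alpha_0$, and let $(\mbfs{x}^t)\subseteq S$ be its exposing sequence. Since $C'$ is $S$-free and $\mbfs{x}^t\in S$, we have $\mbfs{x}^t\notin\intr(C')$, so a supporting or separating hyperplane of $C'$ at $\mbfs{x}^t$ yields a valid inequality $\mbfs{\delta}^t{}^\T\mbfs{x}\le\delta_0^t$ for $C'$ with $\mbfs{\delta}^t{}^\T\mbfs{x}^t\ge\delta_0^t$, which I normalize so that $\|\mbfs{\delta}^t\|=\|\mbfs{\alpha}\|$. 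As $C\subseteq C'$, this inequality is valid for $C$, so the exposing property forces $(\mbfs{\delta}^t,\delta_0^t)\to(\mbfs{\alpha},\alpha_0)$. Finally $\mbfs{z}\in C'$ gives $\mbfs{\delta}^t{}^\T\mbfs{z}\le\delta_0^t$ for all $t$, and passing to the limit yields $\mbfs{\alpha}^\T\mbfs{z}\le\alpha_0$, contradicting $\mbfs{\alpha}^\T\mbfs{z}>\alpha_0$.

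\textbf{Necessity, reduction.} Conversely, assuming $C$ maximal, I would let $I$ be the set of all valid inequalities $(\mbfs{\alpha},\alpha_0)$ with $\|\mbfs{\alpha}\|=1$ that possess an exposing sequence in $S$, and set $C_I=\{\mbfs{x}:\ \mbfs{\alpha}^\T\mbfs{x}\le\alpha_0\ \forall(\mbfs{\alpha},\alpha_0)\in I\}$. Since every such inequality is valid, $C\subseteq C_I$, so it remains to prove $C_I\subseteq C$. This reduces to one key claim: for every $\mbfs{z}\notin C$ there is an inequality in $I$ separating $\mbfs{z}$; indeed, that claim immediately gives $C_I=C$ and exhibits $I$ as the desired family.

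\textbf{Necessity, construction (the crux).} Fix $\mbfs{z}\notin C$, let $\mbfs{x}_0=\proj_C(\mbfs{z})$, $\mbfs{\alpha}=(\mbfs{z}-\mbfs{x}_0)/\|\mbfs{z}-\mbfs{x}_0\|$, and for small $\epsilon>0$ put $\mbfs{w}_\epsilon=\mbfs{x}_0+\epsilon\mbfs{\alpha}\notin C$. By maximality $\overline{\conv}(C\cup\{\mbfs{w}_\epsilon\})$ is not $S$-free, so it contains a point $\mbfs{p}_\epsilon\in S$ in its interior; since every point of this hull lies within $\dist(\mbfs{w}_\epsilon,C)=\epsilon$ of $C$ and $C$ is $S$-free, we get $\mbfs{p}_\epsilon\notin\intr(C)$ and $\dist(\mbfs{p}_\epsilon,C)\le\epsilon$. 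Writing $\mbfs{q}_\epsilon=\proj_C(\mbfs{p}_\epsilon)$ and $\mbfs{\eta}_\epsilon=(\mbfs{p}_\epsilon-\mbfs{q}_\epsilon)/\|\mbfs{p}_\epsilon-\mbfs{q}_\epsilon\|$ produces a valid unit inequality $(\mbfs{\eta}_\epsilon,\mbfs{\eta}_\epsilon^\T\mbfs{q}_\epsilon)$ that $\mbfs{p}_\epsilon$ violates, and a short computation (using that $\mbfs{w}_\epsilon$ must itself violate this inequality) shows $\mbfs{\eta}_\epsilon^\T\mbfs{\alpha}>0$ and, for $\epsilon<\|\mbfs{z}-\mbfs{x}_0\|$, that it separates $\mbfs{z}$. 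The remaining task is to upgrade this family of $S$-points into a genuine exposing sequence for a single separating inequality after passing to a subsequence $\epsilon_t\to0$ with $\mbfs{\eta}_{\epsilon_t}\to\mbfs{\eta}^*$.

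\textbf{Main obstacle.} I expect the principal difficulty to be verifying the exposing property itself, namely that every valid unit inequality nearly violated by the constructed $S$-points converges to the target. This can fail at non-smooth boundary points: if $\mbfs{q}_{\epsilon_t}$ accumulates at a corner where many supporting hyperplanes coexist, the sequence exposes nothing. The plan to overcome this is to avoid exposing the projection inequality directly and instead select, among the separating directions produced above, one whose exposed face $\argmax_{\mbfs{x}\in C}\mbfs{\eta}^\T\mbfs{x}$ is nonempty, and then re-center the cap construction at a point in the \emph{relative interior} of that face with push direction $\mbfs{\eta}$; this forces the projection normals of the nearby $S$-points to converge to $\mbfs{\eta}$ and excludes competing normals, after which a diagonal argument across the two indices yields the exposing sequence. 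Two technical points must be dispatched en route: the possible unboundedness of $C$, so that the relevant suprema stay finite and converge (via lower semicontinuity of the support function and the recession cone), and the case where the supremum defining an exposed face is not attained (handled by a limiting argument in the recession directions). I anticipate these boundary and attainment issues, rather than the separation estimates, to be the hard part.
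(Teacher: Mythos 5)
Your sufficiency direction is correct and is essentially the paper's argument (the paper phrases the contradiction as the limit inequality being valid for the larger set $K$ rather than testing it at a specific point $\mbfs{z}$, but the content is identical).

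The necessity direction, however, has a genuine gap: the crux of the theorem is precisely the step you defer. You construct candidate $S$-points $\mbfs{p}_\epsilon$ and candidate normals $\mbfs{\eta}_\epsilon$, but you never verify the exposing property, and your own ``main obstacle'' paragraph concedes that the construction as given can fail at non-smooth boundary points and proposes a second, re-centered construction that is also left unexecuted. Two concrete problems. First, your normal $\mbfs{\eta}_\epsilon=(\mbfs{p}_\epsilon-\mbfs{q}_\epsilon)/\|\mbfs{p}_\epsilon-\mbfs{q}_\epsilon\|$ is undefined whenever $\mbfs{p}_\epsilon\in\partial C$, i.e.\ $\mbfs{p}_\epsilon=\mbfs{q}_\epsilon$; since $\mbfs{p}_\epsilon$ lies in $\intr(\overline{\conv}(C\cup\{\mbfs{w}_\epsilon\}))$ and merely outside $\intr(C)$, landing on $\partial C$ is not a degenerate case but the typical one for maximal $S$-free sets (for a lattice, the witnesses sit exactly on the facets). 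Replacing the projection normal by an arbitrary supporting hyperplane at such a point destroys the uniqueness you need. Second, even at a point of the relative interior of an exposed face, the exposing property quantifies over \emph{all} valid inequalities of the prescribed norm that are nearly violated at $\mbfs{x}^t$ -- not only projection normals -- and controlling all of them requires knowing that no other ``direction'' in the polar of $C$ can produce a violated inequality at $\mbfs{x}^t$. That is a statement about the structure of the whole set of valid inequalities, which your pointwise cap construction does not address.

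The paper resolves exactly this difficulty by working in the coefficient space rather than in $\mbb{R}^d$: it takes $I$ to be unit-norm generators of the extreme rays of the polar cone $C^{\circ}$, shows via a Carath\'eodory/compactness argument that deleting a ball $K_t$ of radius $1/t$ around an extreme ray $(\overline{\mbfs{\alpha}},\overline{\alpha}_0)$ strictly enlarges $C$ to a set $C_t$, and then invokes maximality to find $\mbfs{x}^t\in\intr(C_t)\cap S$. By construction \emph{every} valid inequality violated at $\mbfs{x}^t$ must have coefficients in $K_t$, so the convergence required by Definition~\ref{def:exposingseq} is automatic, with no smoothness, attainment, or recession-cone issues to handle. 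If you want to salvage your route, you would need to prove an analogue of this ``all nearly-violated valid inequalities are close to the target'' statement, which is where the real work lies; as written, the necessity direction is a plan rather than a proof.
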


The rest of the paper is outlined as follows. 
We give examples of maximal $Q$-free sets in Section~\ref{secExamples}.
We prove the general characterization Theorem~\ref{thmMax} in Section~\ref{secExposing}.
The necessity of the standard form $C_{\Gamma}$ (Theorem~\ref{thmStdForm}) is given in Section~\ref{secHomogeneous}.
Preliminary results from convex analysis and about non-expansive functions are given in Section~\ref{secNEF}.
We prove the characterization of maximality (Theorem~\ref{thm:continuous}) in Section~\ref{secMainThm}.
We prove Theorem~\ref{thmMax} in Section~\ref{secPolyhedral}.
Finally, we discuss future work in Section~\ref{secFutureWork}.


\section{Examples of maximal $Q$-free sets}\label{secExamples}

\begin{example}\label{ex1}
Our first example illustrates a non-polyhedral $Q$-free set using Corollary \ref{corMain2}.
Consider $n=m=2$. We construct the function $\Gamma$ described in Remark \ref{rmk:contracting}: using polar coordinates, for $\theta\in[0,2\pi]$, we define $\gamma(\theta) = - \theta(\theta - 2\pi)/(4\pi)$ and define $\Gamma$ such that $\beta(\theta) := (\cos(\theta),\sin(\theta)) \mapsto \Gamma(\beta(\theta)) := (\cos(\gamma(\theta)),\sin(\gamma(\theta)))$.
As discussed in Remark \ref{rmk:contracting}, $\Gamma$ is strictly non-expansive. 
\begin{figure}[t]
		\centering
		\includegraphics[scale=0.25]{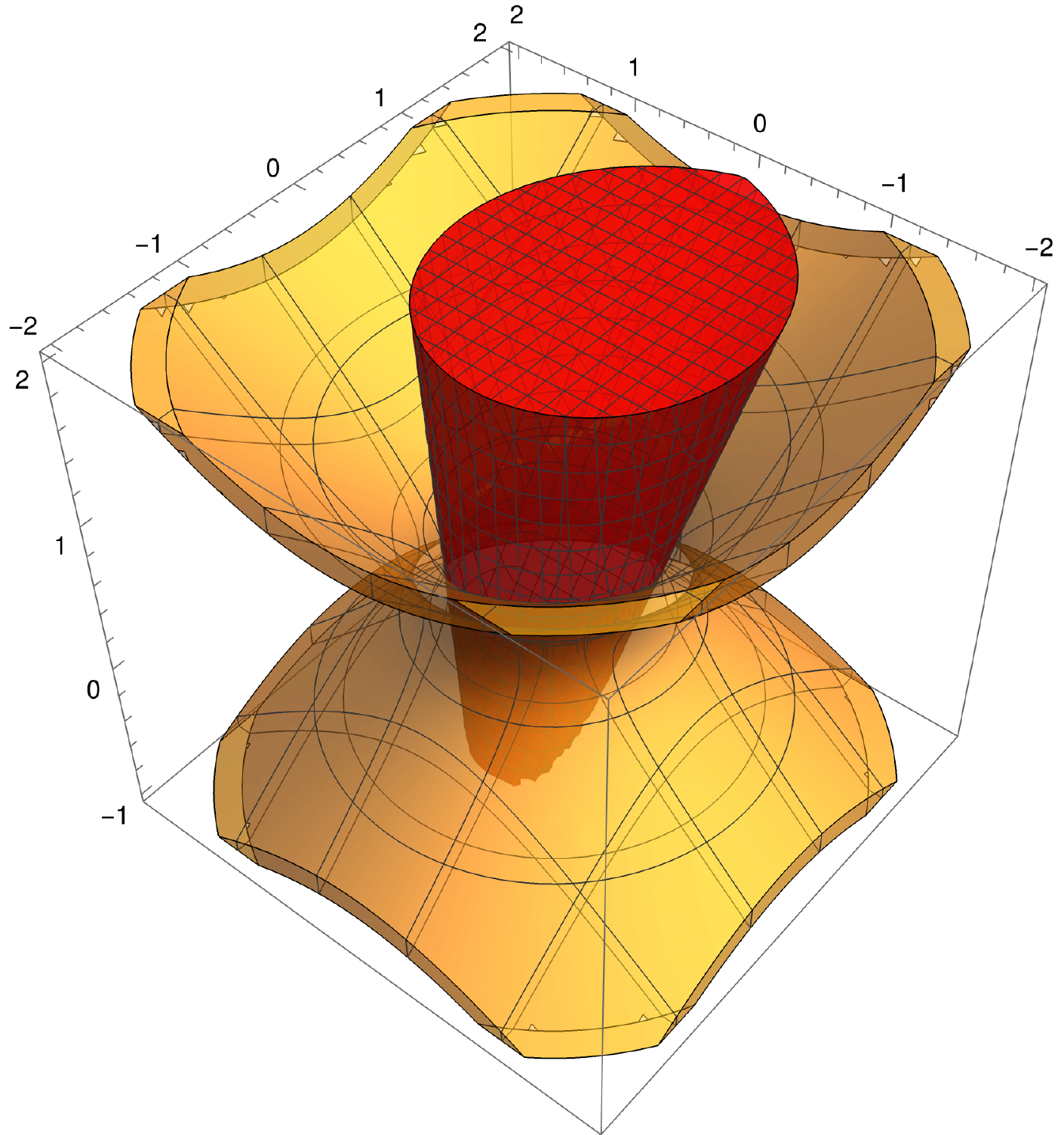}
		\caption{$3$-dimensional slices of the 4-dimensional sets $Q$ (boundary in orange) and $C_\Gamma$ (red) of Example \ref{ex1}.\\[.35 cm]}
		\label{fig:nonpolyex}
\end{figure}
Therefore, Corollary \ref{corMain2} implies that $C_\Gamma$ is a full-dimensional maximal $Q$-free set. Figure \ref{fig:nonpolyex} shows a 3-dimensional slice of this 4-dimensional $C_\Gamma$. Note how the non-differentiability of $\gamma$ at 0 translates into a non-smooth region of the set.
\end{example}

Our next example shows an application of Theorem~\ref{thm:continuous} in a non-polyhedral setting.

\begin{example}\label{ex4}
Consider $n=m=2$. 
Similarly to Remark \ref{rmk:contracting}, we construct a function $\Gamma$ using polar coordinates. 
For $\theta\in[0,2\pi]$, define
 
\[\gamma(\theta) =
 \left\{ 
\begin{array}{cc}
 \theta & 0\leq \theta\leq \frac{\pi }{2} \\
 \pi -\theta & \frac{\pi }{2}\leq \theta\leq \pi  \\
 \theta-\pi  & \pi \leq \theta\leq \frac{5 \pi }{4} \\
 \frac{1}{3} (2 \pi -\theta) & \frac{5 \pi }{4}\leq \theta\leq 2 \pi  \\
\end{array}\right.
\]
 and define $\Gamma$ such that 
\[\mbfs{\beta}(\theta) := (\cos(\theta),\sin(\theta)) \mapsto \Gamma(\mbfs{\beta}(\theta)) := (\cos(\gamma(\theta)),\sin(\gamma(\theta))).\] 
 In Figure \ref{fig:nonpolyex-new} we show plots of $\gamma$ and of $\mbfs{\beta}(\theta_1)^\T\mbfs{\beta}(\theta_2) - \Gamma(\mbfs{\beta}(\theta_1))^\T\Gamma(\mbfs{\beta}(\theta_2))$ that illustrate that $\Gamma$ is non-expansive. 
 Note that $\Gamma$ is not strictly non-expansive: this can be seen in Figure \ref{fig:subfiex4a}, as $\mbfs{\beta}(\theta_1)^\T\mbfs{\beta}(\theta_2) - \Gamma(\mbfs{\beta}(\theta_1))^\T\Gamma(\mbfs{\beta}(\theta_2)) = 0$ for some values $\theta_1 \neq \theta_2$ (mod $2\pi$).
Additionally, $(\mbfs{0}, \mbfs{0})\not\in \conv(\{(\Gamma(\mbfs{\beta}),-\mbfs{\beta})\,:\ \mbfs{\beta}\in D^2\})$, as $\gamma(\theta) \in [0,\pi/2]$ and thus $\Gamma(\mbfs{\beta})$ lies on an arc segment in the non-negative orthant.
\begin{figure}[t]
\centering
     \begin{subfigure}[t]{0.45\linewidth}
\centering
\includegraphics[scale=0.25]{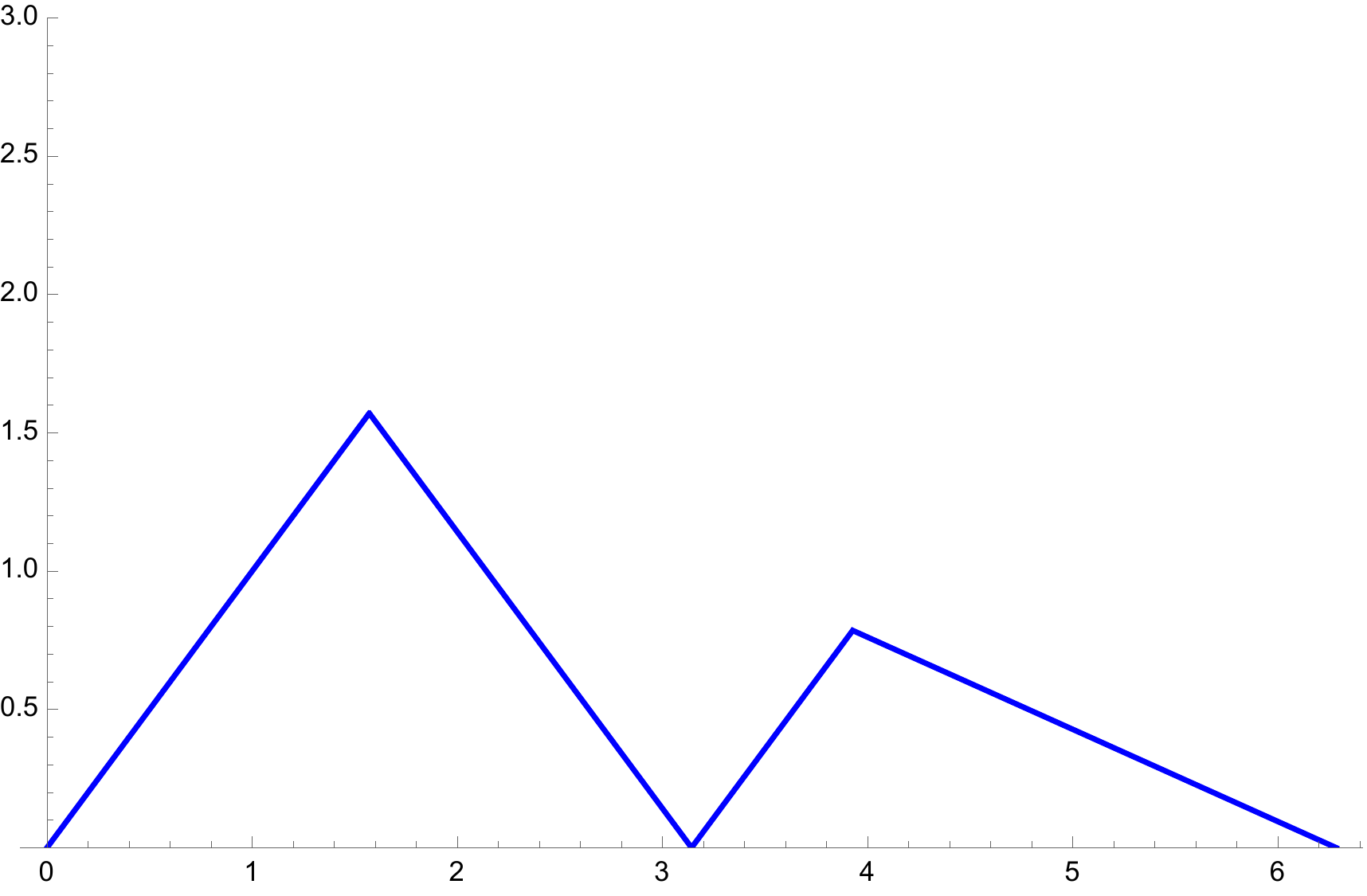}
    \caption{Plot of $\gamma$ function in Example \ref{ex4}. This function is a representation of the $\Gamma$ function in polar coordinates.}
    \label{fig:subfiex4a}
\end{subfigure}%
\hfill
     \begin{subfigure}[t]{0.45\linewidth}
\centering
\includegraphics[scale=0.25]{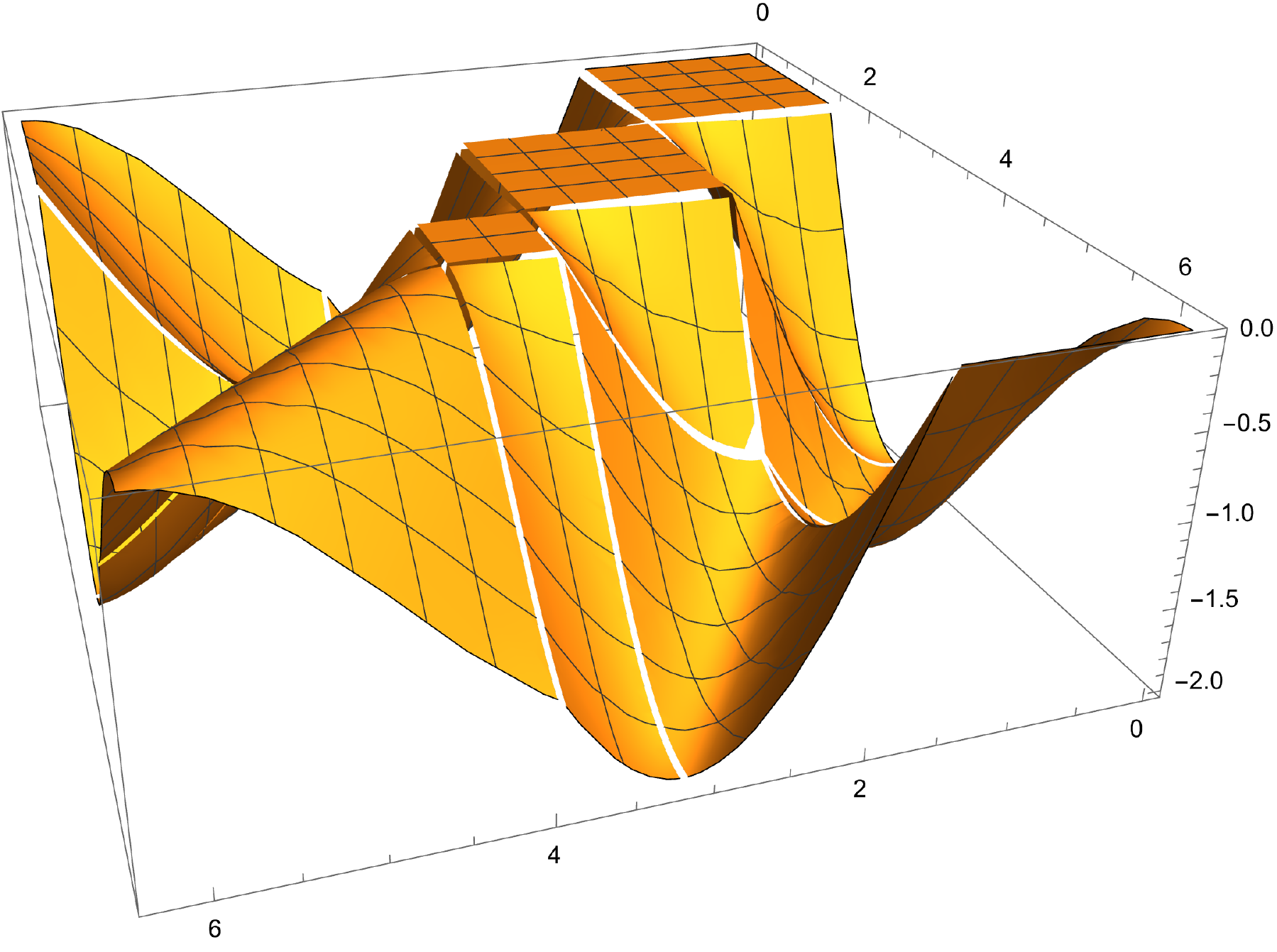}
    \caption{Plot of $\mbfs{\beta}(\theta_1)^\T\mbfs{\beta}(\theta_2) - \Gamma(\mbfs{\beta}(\theta_1))^\T\Gamma(\mbfs{\beta}(\theta_2))$ for $\Gamma$ defined in Example \ref{ex4}. This quantity is non-positive.}
    \label{fig:subfiex4a}
\end{subfigure}
		\caption{Plots of $\gamma$ and $\mbfs{\beta}(\theta_1)^\T\mbfs{\beta}(\theta_2) - \Gamma(\mbfs{\beta}(\theta_1))^\T\Gamma(\mbfs{\beta}(\theta_2))$ in Example \ref{ex4} which lead to a maximal quadratic-free set.}
		\label{fig:nonpolyex-new}
\end{figure}
Theorem~\ref{thm:continuous} implies that $C_\Gamma$ is a full-dimensional maximal $Q$-free set. Figure \ref{fig:nonpolyex-new-slice} shows a 3-dimensional slice of this 4-dimensional $C_\Gamma$. 
\begin{figure}[t]
		\centering
		\includegraphics[scale=0.20]{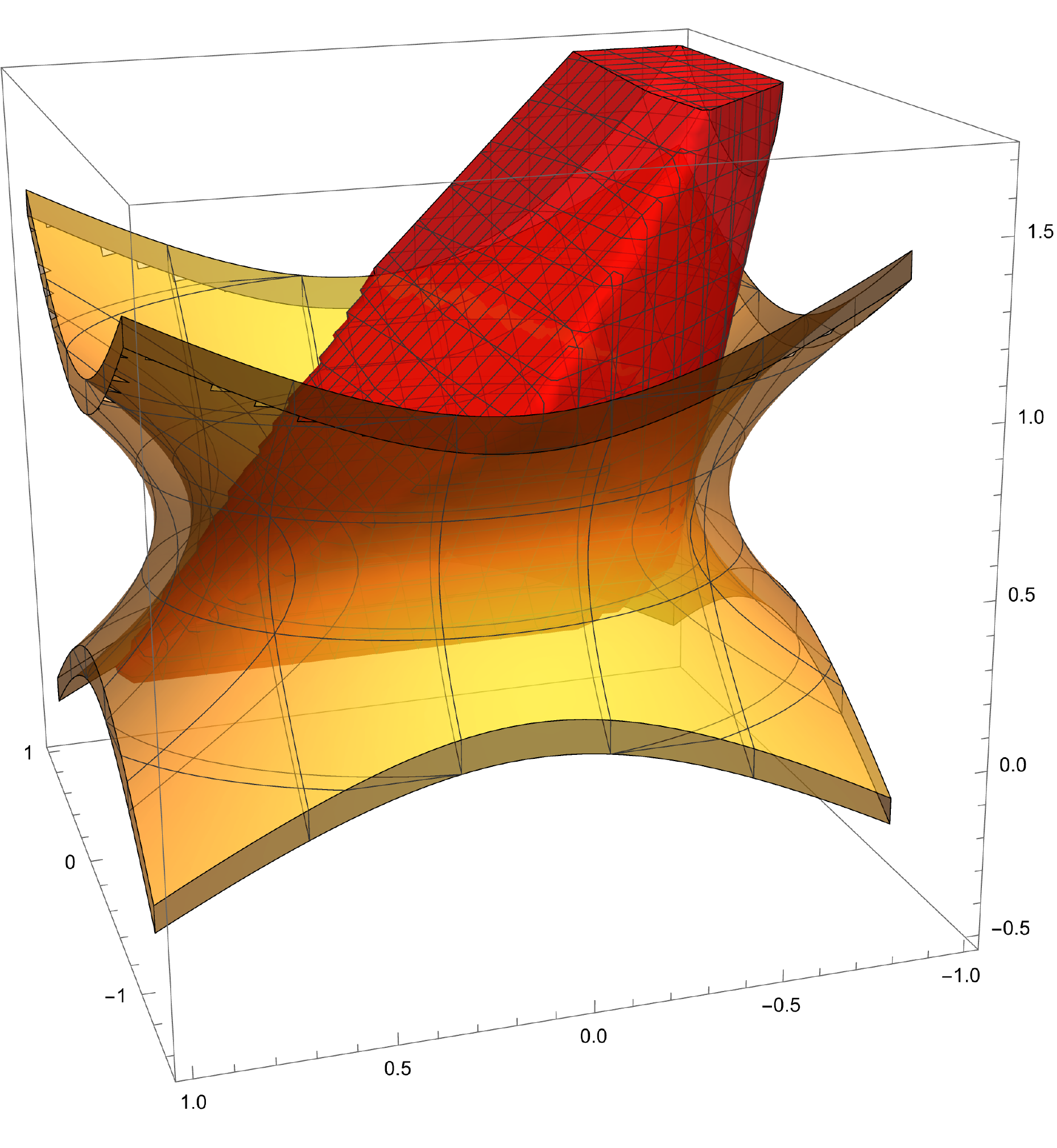}
		\caption{$3$-dimensional slices of the 4-dimensional sets $Q$ (boundary in orange) and $C_\Gamma$ (red) of Example \ref{ex4}. Maximality may not be evident in this picture because maximality is not necessarily preserved when taking slices.}
		\label{fig:nonpolyex-new-slice}
\end{figure}
\end{example}

\begin{example}\label{ex2}
Suppose $n=m$ and define $\Gamma(\mbfs{\beta}) = |\mbfs{\beta}|$, where the absolute value is taken component-wise.
The reverse triangle inequality $||a| - |b||\leq |a - b|$ implies $\Gamma$ is non-expansive.
Set 
\(
I = \{\mbfs{e}^1, \dotsc, \mbfs{e}^m, -\mbfs{e}^1, \dotsc, -\mbfs{e}^m\} \subseteq D^m
\)
where $\mbfs{e}^i \in \mbb{Z}^m$ is the $i$th standard unit vector.
Each $\mbfs{\beta}\in D^m$ is in $\cone(J)$ for a linearly independent set $J\subseteq I$.
By definition of $I$ and because $J$ is linearly independent, each pair of distinct points $\mbfs{\beta}, \mbfs{\beta}'\in J$ is isometric. 
Theorem \ref{thmCpolyhedral} then ensures that $C_\Gamma$ is a polyhedron. 
Moreover, from Theorem \ref{thmCpolyhedral} we see that
\begin{align*}
C_\Gamma = & \{(\mbfs{x},\mbfs{y}) \in \mbb{R}^n \times \mbb{R}^m : \Gamma(\mbfs{\beta}){}^\T \mbfs{x} - \mbfs{\beta}{}^\T \mbfs{y} \ge 0 ~~ \forall ~ \mbfs{\beta} \in I\} \\ 
=  &\{(\mbfs{x},\mbfs{y}) \in \mbb{R}^n \times \mbb{R}^m : x_i \geq |y_i| ~~ \forall\ i\in \{1,\dotsc, m\}\}.
\end{align*}
For each $\mbfs{\beta} \in D^m$, the point $\Gamma(\mbfs{\beta})$ is non-negative and strictly positive in at least one component.
Hence, $(\mbfs{0}, \mbfs{0})\not\in \conv(\{(\Gamma(\mbfs{\beta}),-\mbfs{\beta})\,:\ \mbfs{\beta}\in D^m\})$.
 Theorem \ref{thm:continuous} then ensures that $C_{\Gamma}$ is a full-dimensional maximal $Q$-free set.
Figure \ref{fig:4dexample} illustrates a $3$-dimensional slice of the $4$-dimensional sets $Q$ and $C_\Gamma$ obtained for $n=m=2$.
\begin{figure}[t]
\centering
\includegraphics[scale=0.2]{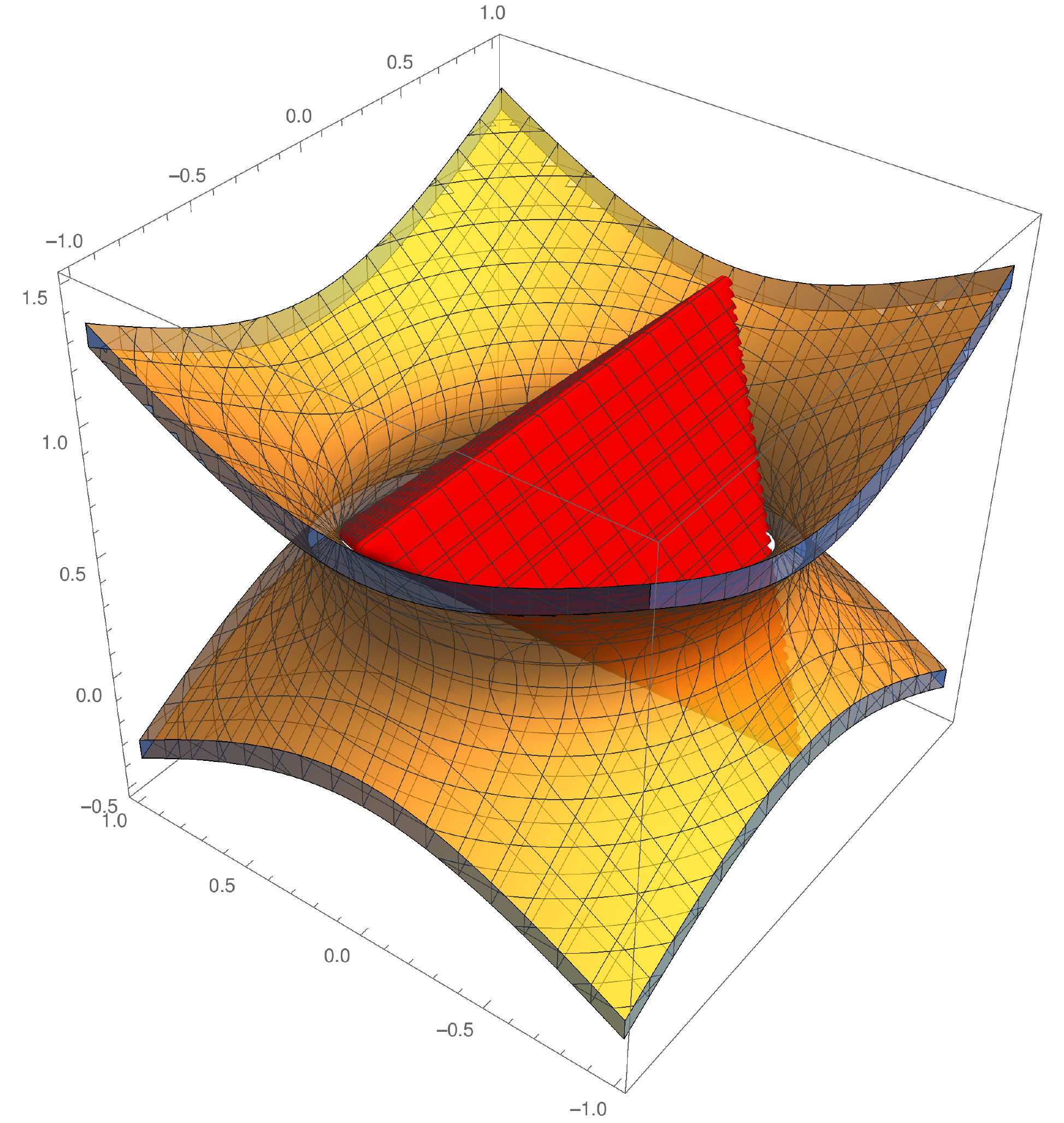}
\caption{$3$-dimensional slice of the $4$-dimensional sets $Q$ (boundary in orange) and $C_\Gamma$ (red) obtained when $n=m=2$ in Example \ref{ex2}.}
\label{fig:4dexample}
\end{figure}

The maximality of this example could not have been proved with the results of Mu\~{n}oz and Serrano~\cite{MS2022}.
Indeed, it can be seen that $C_\Gamma \cap Q = \{(\mbfs{x},\mbfs{y}) \in \mbb{R}^n \times \mbb{R}^m : x_i = |y_i| ~~ \forall\ i\in \{1,\dotsc, m\}\}$.
Therefore, every facet of $C_\Gamma$ intersects $Q$ and, more importantly, any $(\mbfs{x},\mbfs{y}) \in C_\Gamma\cap Q$ is contained in $m$ different facets of $C_\Gamma$. 
Consequently, there is no exposing point in $C_\Gamma \cap Q$ for any of the facets of $C_\Gamma$.\\
\end{example}

\begin{example}\label{ex3}
In this example we show how to construct a polyhedral maximal $Q$-free set from a starting set of points in $D^m$.
In Figure \ref{fig:6dexample} (left) consider $\{\pm\mbfs{e}^3\}$ (in blue), $\{\pm \mbfs{e}^1, -\mbfs{e}^2\}$ (in red), $\{\mbfs{e}^2\}$ (in green), and $\{-\sfrac{1}{\sqrt{2}}\cdot \mbfs{e}^1 \pm \sfrac{1}{\sqrt{2}}\cdot \mbfs{e}^2\}$ (in black) in $D^3$; let $I$ be the set of these $8$ points.
We define $\Gamma$ to map $\{\pm\mbfs{e}^3\}$ to $\mbfs{e}^3$, $\{\pm \mbfs{e}^1, -\mbfs{e}^2\}$ to $-\mbfs{e}^2$, $\{\mbfs{e}^2\}$ to $-\mbfs{e}^1$, and $\{-\sfrac{1}{\sqrt{2}}\cdot \mbfs{e}^1 \pm \sfrac{1}{\sqrt{2}}\cdot \mbfs{e}^2\}$ to $-\sfrac{1}{\sqrt{2}}\cdot \mbfs{e}^1 + \sfrac{1}{\sqrt{2}}\cdot \mbfs{e}^2$; see Figure \ref{fig:6dexample} (right). 
Each $\mbfs{\beta} \in D^3$ is a conic combination of at most three points in $I$ that are pairwise isometric. 
\begin{figure}[t]
\centering
\includegraphics[scale=0.28]{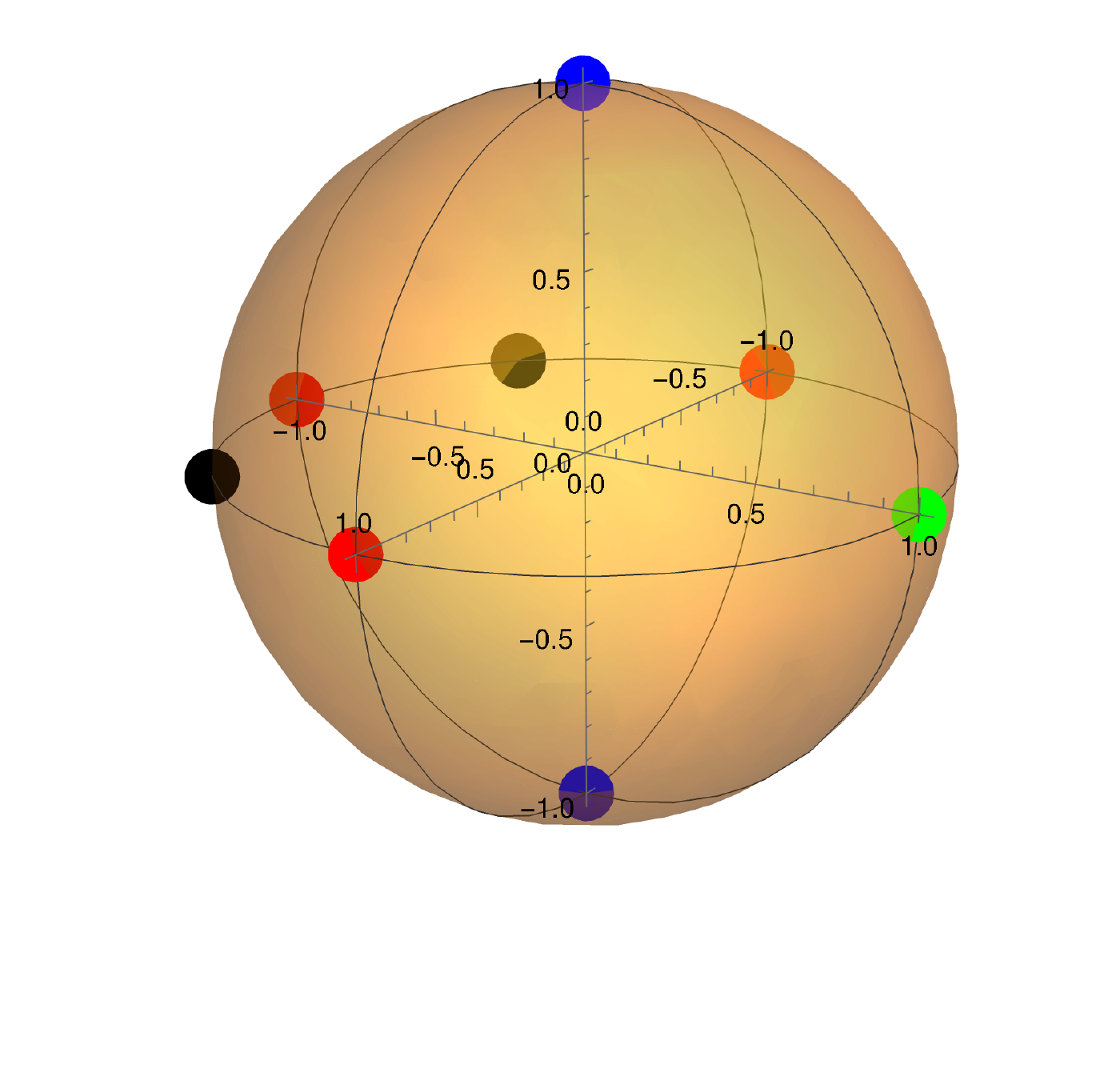}%
\includegraphics[scale=0.28]{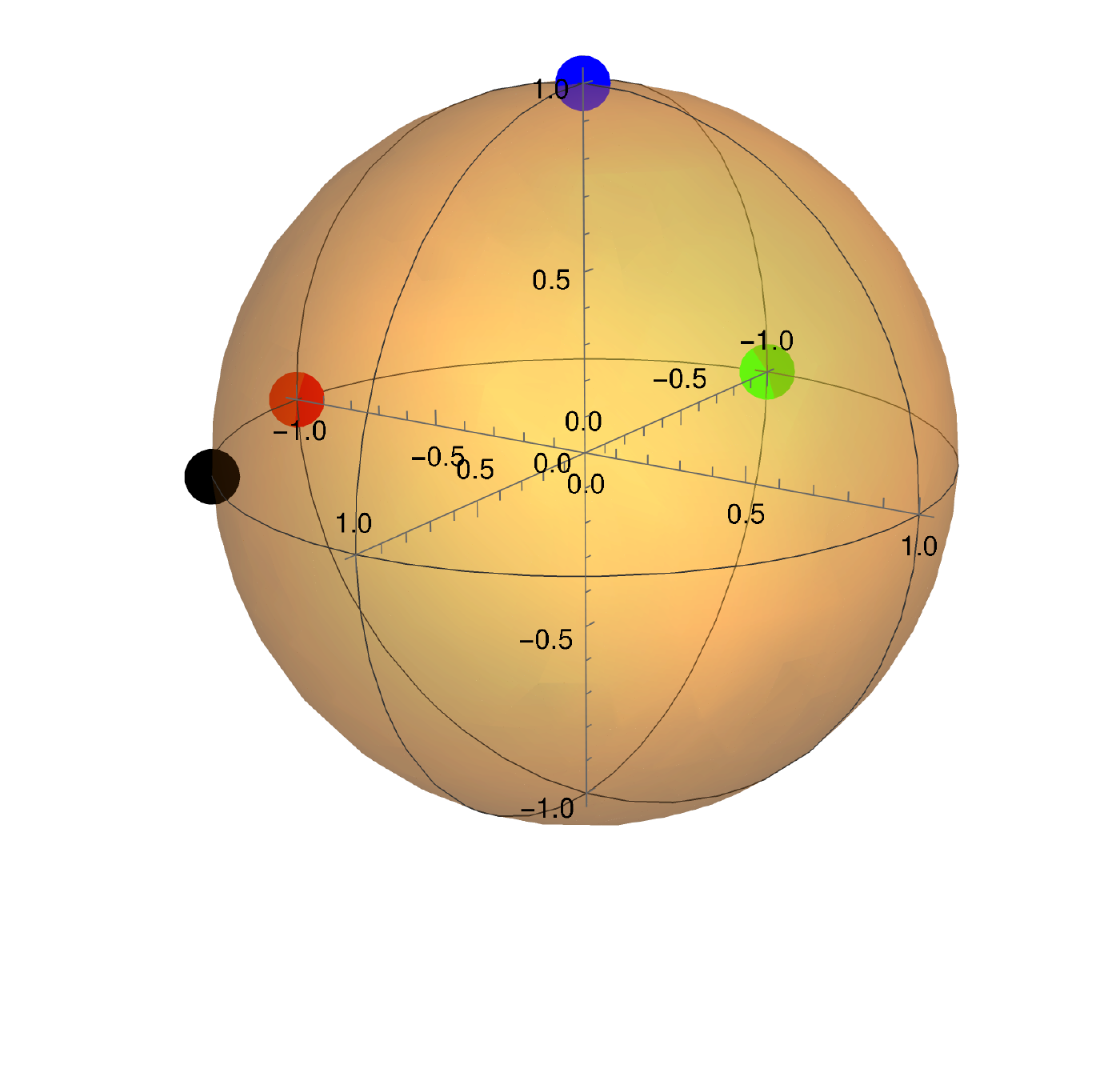}
\vspace{-0.8cm}
\caption{Representation of $\Gamma$ in the $6$-dimensional set of Example \ref{ex3}. A point on the left plot represents a $\mbfs{\beta}$ that gets mapped by $\Gamma$ to the point on the right plot of the same color.}
\label{fig:6dexample}
\end{figure}
One can extend $\Gamma$ from $I$ to a non-expansive function on $D^3$ through a conic interpolation; see Lemma~\ref{lemIsoSets}.
Note that the set $\{\Gamma(\mbfs{\beta}):\ \mbfs{\beta} \in D^3\}$ is contained in a pointed cone.
Thus, we also have $(\mbfs{0}, \mbfs{0})\not\in \conv(\{(\Gamma(\mbfs{\beta}),-\mbfs{\beta})\,:\ \mbfs{\beta}\in D^3\})$.
Theorem~\ref{thm:continuous} then implies that
\[
C_\Gamma = \{(\mbfs{x},\mbfs{y}) \in \mbb{R}^3 \times \mbb{R}^3 : \Gamma(\mbfs{\beta}){}^\T \mbfs{x} - \mbfs{\beta}{}^\T \mbfs{y} \ge 0 ~~ \forall\ \mbfs{\beta} \in I\}.
\]
is maximal $Q$-free.
Figure \ref{fig:6dexample2} shows a 3-dimensional slice of the 6-dimensional $C_\Gamma$.
\begin{figure}[t]
\centering
\includegraphics[scale=0.18]{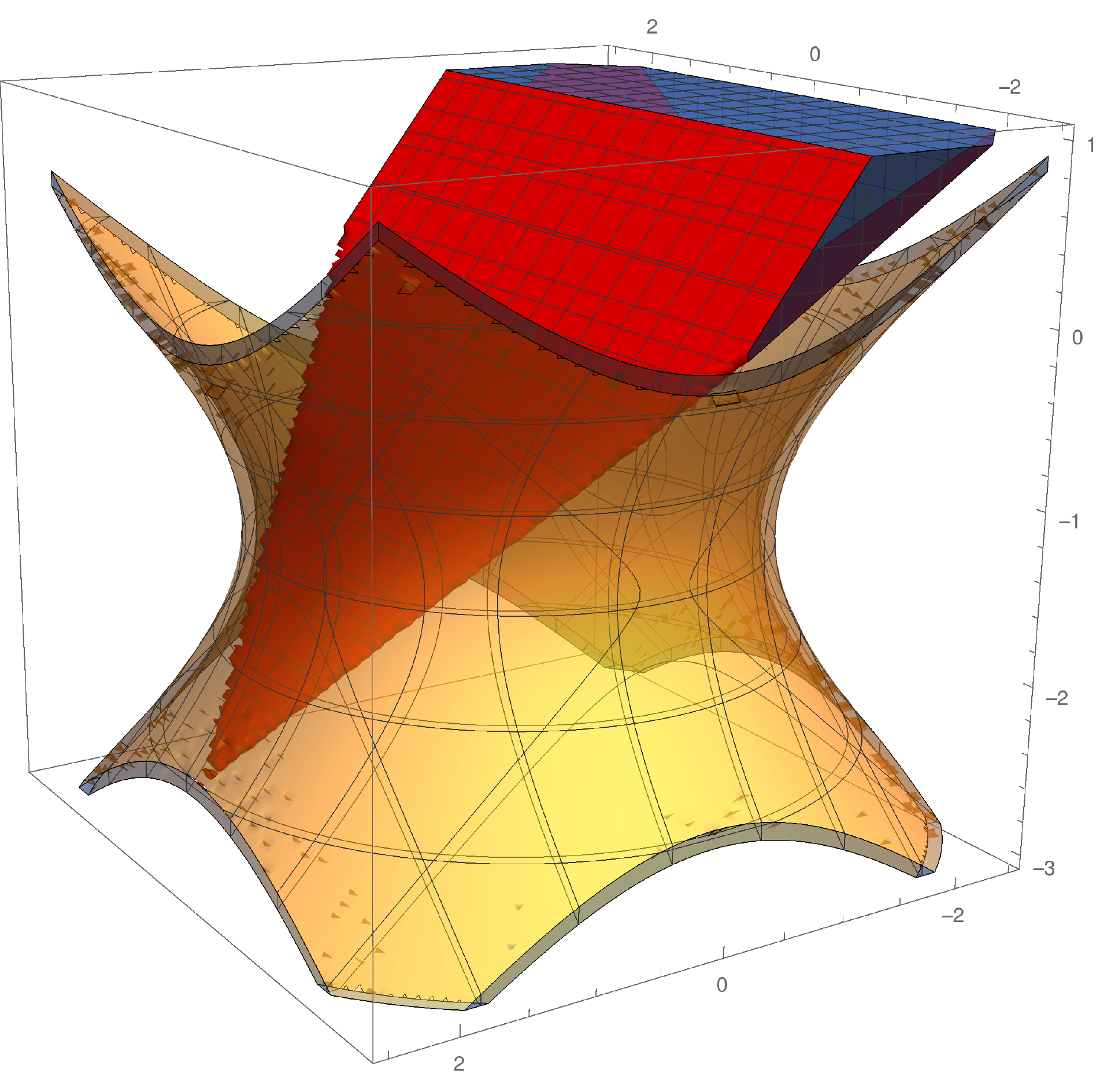}
\caption{$3$-dimensional slices of the 6-dimensional sets $Q$ (boundary in orange) and $C_\Gamma$ (red) obtained using the $\Gamma$ function depicted in Figure \ref{fig:6dexample}. We note that maximality may not be evident in this picture since maximality is not preserved when taking slices.}
\label{fig:6dexample2}
\end{figure}

The construction in this example can be generalized to an arbitrarily large set $I$ as long as their conic combinations generate $D^m$.
This produces a maximal $Q$-free polyhedra with arbitrarily many facets.
\end{example}


\section{A proof of Theorem~\ref{thmMax}}\label{secExposing}

The remainder of the paper is dedicated to proving our results. 
We begin these proof sections with the maximality criterion in Theorem~\ref{thmMax}. 
We remark that this results hold for arbitrary $S$, and not only for the quadratically-defined $Q$.\\

\proof[Proof of Theorem~\ref{thmMax}]
  $(\Leftarrow)$ Assume to the contrary that $C$ is not maximal and let $K \supsetneq C$ be a convex $S$-free set.
  There exists $(\overline{\mbfs{\alpha}}, \overline{\alpha}_0) \in I$ such that $\overline{\mbfs{\alpha}}^\T \mbfs{x} \le \overline{\alpha}_0$ is not valid for $K$.
  Let $(\mbfs{x}^t)_{t=1}^\infty$ be a sequence in $S$ as in the hypothesis of the theorem.
  Since $\mbfs{x}^t \in S$ and $K$ is $S$-free, there exists an inequality $\mbfs{\delta}^t{}^\T\mbfs{x} \le \delta^t_0$ such that $\|\mbfs{\delta}^t\| = \|\overline{\mbfs{\alpha}}\|$, is valid for $K$, and $\mbfs{\delta}^t{}^\T \mbfs{x}^t \ge {\delta^t_0}$.
  The inequality $\mbfs{\delta}^t{}^\T\mbfs{x} \le \delta^t_0$ is valid for $C$ because $C \subsetneq K$.
  By the definition of exposing sequence, we have $\lim_{t\to\infty} (\mbfs{\delta}^t, \delta^t_0)= (\overline{\mbfs{\alpha}}, \overline{\alpha}_0)$, so $\overline{\mbfs{\alpha}}{}^\T \mbfs{x}\le  \overline{\alpha}_0$ is valid for $K$.
  This is a contradiction.

\medskip
   \noindent $(\Rightarrow)$ Let $C^{\circ} \subseteq \mbb{R}^{d} \times \mbb{R}$ be the polar of $C$, i.e., the set of coefficients corresponding to valid inequalities for $C$:
    \[
  C = \{\mbfs{x}\in \mbb{R}^d : \mbfs{\alpha}{}^\T \mbfs{x} \le \alpha_0~\forall ~ (\mbfs{\alpha}, \alpha_0) \in C^{\circ}\} .
  \]

  The set $C^{\circ}$ is a closed convex cone, and $C^{\circ}$ is pointed because $C$ is full-dimensional.
  Since $C^{\circ}$ is a closed pointed cone, there is a set $I \subseteq C^{\circ}$ generating the extreme rays of $C^{\circ}$ and $C^{\circ} = \cone(I)$; see, e.g.,~\cite[Theorem 18.5]{R1970}.
  Hence,
  \[
  C 
  =\left\{\mbfs{x} \in \mbb{R}^d : \mbfs{\alpha}{}^\T \mbfs{x} \le \alpha_0~~\forall ~ (\mbfs{\alpha},\alpha_0) \in I \right\}.
  \]
  Without loss of generality, we can scale each $(\mbfs{\alpha}, \alpha_0) \in I$ by a positive number so that $\|(\mbfs{\alpha}, \alpha_0)\| = 1$.
  Let $(\overline{\mbfs{\alpha}}, \overline{\alpha}_0) \in I$ and define
  \begin{align*}
    K_t &:= \left\{(\mbfs{\alpha}, {\alpha}_0) \in \mbb{R}^d \times \mbb{R} :\ \|(\overline{\mbfs{\alpha}}, \overline{\alpha}_0) - (\mbfs{\alpha}, {\alpha}_0)\| < \frac{1}{t}\right\}, \\
    C_t &:= \left\{ \mbfs{x} \in \mbb{R}^d: \mbfs{\alpha}{}^\T \mbfs{x} \leq \alpha_0~ \forall ~(\mbfs{\alpha}, \alpha_0) \in I \setminus K_t\right\}.
  \end{align*}

  We proceed to show that $C \subsetneq C_t$.
  Assume to the contrary that $C = C_t$.
  This implies that $I \subseteq \overline{\cone}(I \setminus K_t)$.
  In particular, $(\overline{\mbfs{\alpha}}, \overline{\alpha}_0) \in \overline{\cone}(I \setminus K_t)$.
  Thus, by Carath\'eodory's Theorem, we can write
  \[
  (\overline{\mbfs{\alpha}}, \overline{\alpha}_0)  = \lim_{s \to\infty} \sum_{i=1}^{d+1} \lambda_{[i,s]}(\mbfs{\alpha}_{[i,s]}, \alpha_{[0,i,s]})
  \]
  where each $\lambda_{[i,s]} \ge 0$ and $(\mbfs{\alpha}_{[i,s]}, \alpha_{[0,i,s]}) \in I\setminus K_t$.
  Given that each point in $I$ has norm $1$ and $I \setminus K_t$ is closed, we may assume that $(\mbfs{\alpha}_{[i,s]}, \alpha_{[0,i,s]})$ converges to a norm-1 vector $(\mbfs{\alpha}_{[i]}, \alpha_{[0,i]}) \in C^{\circ}  \setminus K_t$ for each $i$.
  Furthermore, we assume that $\lambda_{[i,s]}$ converges to $\lambda_i \ge 0$ for each $i$; otherwise, say $\lambda_{[1,s]}$ diverges to $\infty$, then 
  \begin{align*}
  \mbfs{0} = \lim_{s\to\infty} \frac{1}{\lambda_{[1,s]}}(\overline{\mbfs{\alpha}}, \overline{\alpha}_0)   &= \lim_{s\to\infty}  \frac{1}{\lambda_{[1,s]}}\sum_{i=1}^{d+1} \lambda_{[i,s]}(\mbfs{\alpha}_{[i,s]}, \alpha_{[0,i,s]})\\
  & = (\mbfs{\alpha}_{[1]}, \alpha_{[0,1]}) + \lim_{s\to\infty}  \sum_{i=2}^{d+1} \frac{\lambda_{[i,s]}}{\lambda_{[1,s]}}(\mbfs{\alpha}_{[i,s]}, \alpha_{[0,i,s]}).
  \end{align*}
  However, then $ \lim_{s\to\infty}  \sum_{i=2}^{d+1} \frac{\lambda_{[i,s]}}{\lambda_{[1,s]}}(\mbfs{\alpha}_{[i,s]}, \alpha_{[0,i,s]}) = -(\mbfs{\alpha}_{[1]}, \alpha_{[0,1]})$, implying that $C^{\circ}$ is not pointed, which is a contradiction.
  Hence,  
  \[
  (\overline{\mbfs{\alpha}}, \overline{\alpha}_0) = \lim_{s \to\infty} \sum_{i=1}^{d+1} \lambda_{[i,s]}(\mbfs{\alpha}_{[i,s]}, \alpha_{[0,i,s]}) =  \sum_{i=1}^{d+1} \lambda_{i}(\mbfs{\alpha}_{i}, \alpha_{[0,i]}),
  \]
   where each $(\mbfs{\alpha}_{i}, \alpha_{[0,i]})$ is in $C^{\circ} \setminus K_t$. 
    However, since $K_t$ is a ball of positive radius, this means that $(\overline{\mbfs{\alpha}}, \overline{\alpha}_0)$ does not generate an extreme ray of $C^{\circ}$. 
  Therefore, $C \subsetneq C_t$.

  Due to the maximality of $C$, there is a vector $\mbfs{x}^t \in \intr(C_t) \cap S$.
  If $\mbfs{\delta}^t{}^\T \mbfs{x}\le\delta^t_0$ is valid for $C$ and ${\mbfs{\delta}^t}{}^\T \mbfs{x}^t \geq {\delta^t_0}$, then  $(\mbfs{\delta}^t, {\delta^t_0}) \in K_t$.
  Thus, $\lim_{t\to \infty}(\mbfs{\delta}^t, {\delta^t}) = (\overline{\mbfs{\alpha}}, \overline{\alpha}_0)$.
  \endproof
  

\section{A proof of Theorem~\ref{thmStdForm}}\label{secHomogeneous}
We use the following theorem from Rockafellar~\cite{R1970}, albeit slightly rephrased.

 \begin{theorem}[Theorem 17.3 from~\cite{R1970}]\label{Rockafellarpatched}
Let $S^* \subseteq \mathbb{R}^{d}\times \mbb{R}$ be a non-empty compact set of vectors $(\mbfs{x}^*,\mu^*)\in \mathbb{R}^{d}\times \mbb{R}$, and set 
\[
B := \left\{\mbfs{x} \in \mbb{R}^d\,:\, \mbfs{x}{}^\top \mbfs{x}^* \leq \mu^* \quad \forall\ (\mbfs{x}^*,\mu^*)\in S^*\right\}.
\]
Suppose $B$ is full-dimensional and $(\mbfs{0}, 0)\not\in S^*$. 
Then, for a given vector $(\widehat{\mbfs{x}},\widehat{\mu})$, where $\widehat{\mbfs{x}}\neq \mbfs{0}$, the inequality
$\mbfs{x}{}^\top \widehat{\mbfs{x}} \leq \widehat{\mu}$ is valid for $B$ if and only if there exist vectors $(\mbfs{x}^*_1, \mu^*_1), \dotsc, (\mbfs{x}^*_k, \mu^*_k)\in S^*$ with $k \leq d$, and $\lambda_1, \dotsc, \lambda_k \geq 0$ such that
\[
\widehat{\mbfs{x}} = \sum_{i=1}^k \lambda_i \mbfs{x}^*_i\quad\text{and}\quad \sum_{i=1}^k \lambda_i \mu^*_i \leq \mu^*.
\]
\end{theorem}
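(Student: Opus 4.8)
The plan is to read this as a polar/Farkas-type statement and prove it by combining a strict-separation argument with a Carath\'{e}odory reduction. The ``if'' direction is immediate: if $\widehat{\mbfs{x}} = \sum_{i=1}^k\lambda_i\mbfs{x}^*_i$ and $\sum_{i=1}^k\lambda_i\mu^*_i\le\widehat\mu$ with $\lambda_i\ge0$ and $(\mbfs{x}^*_i,\mu^*_i)\in S^*$, then for any $\mbfs{x}\in B$ I multiply $\mbfs{x}^\top\mbfs{x}^*_i\le\mu^*_i$ by $\lambda_i$ and sum to get $\mbfs{x}^\top\widehat{\mbfs{x}}\le\sum_i\lambda_i\mu^*_i\le\widehat\mu$, so the inequality is valid. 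The substance is the converse together with the bound $k\le d$. First I would record the consequence of full-dimensionality: picking $\mbfs{x}_0\in\inte(B)$ and moving from $\mbfs{x}_0$ in the direction $\mbfs{x}^*$, one checks $\mbfs{x}_0^\top\mbfs{x}^* < \mu^*$ for every $(\mbfs{x}^*,\mu^*)\in S^*$ (the case $\mbfs{x}^*=\mbfs{0}$ uses $(\mbfs{0},0)\notin S^*$, which forces $\mu^*>0$). This strict-separation property immediately yields $(\mbfs{0},0)\notin\conv(S^*)$.

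Next I would define the cone $K := \cone(S^*\cup\{(\mbfs{0},1)\})$. Since $S^*\cup\{(\mbfs{0},1)\}$ is compact and, using $\mbfs{x}_0^\top\mbfs{x}^*<\mu^*$, its convex hull avoids the origin, the standard fact that the conic hull of a compact convex set missing the origin is closed shows that $K$ is a closed convex cone. I then claim that, for $\widehat{\mbfs{x}}\neq\mbfs{0}$, the inequality $\mbfs{x}^\top\widehat{\mbfs{x}}\le\widehat\mu$ is valid for $B$ if and only if $(\widehat{\mbfs{x}},\widehat\mu)\in K$; note membership in $K$ is exactly the asserted representation, since the generator $(\mbfs{0},1)$ supplies the slack turning $\sum_i\lambda_i\mu^*_i=\widehat\mu$ into $\sum_i\lambda_i\mu^*_i\le\widehat\mu$. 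For the nontrivial implication I argue by contradiction: if $(\widehat{\mbfs{x}},\widehat\mu)\notin K$, separate it from the closed convex cone $K$ to get $(\mbfs{z},w)$ with $\mbfs{z}^\top\widehat{\mbfs{x}}+w\widehat\mu>0$ while $\mbfs{z}^\top\mbfs{x}^*+w\mu^*\le0$ for all $(\mbfs{x}^*,\mu^*)\in S^*$ and $w\le0$ (the latter from testing the generator $(\mbfs{0},1)$). If $w<0$, then $\mbfs{z}/(-w)$ lies in $B$ yet violates $\mbfs{x}^\top\widehat{\mbfs{x}}\le\widehat\mu$; if $w=0$, then $\mbfs{z}$ is a recession direction of $B$ along which $\mbfs{x}^\top\widehat{\mbfs{x}}$ is unbounded above, again contradicting validity. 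Hence $(\widehat{\mbfs{x}},\widehat\mu)\in K$.

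It remains to sharpen the representation to at most $d$ vectors of $S^*$, and this is the step I expect to be the main obstacle. Starting from a finite representation $\widehat{\mbfs{x}}=\sum_i\lambda_i\mbfs{x}^*_i$ with $\lambda_i>0$ and $\sum_i\lambda_i\mu^*_i\le\widehat\mu$, I take one minimizing the number of positive weights and suppose this number exceeds $d$. Then the $\mbfs{x}^*_i$ are linearly dependent, say $\sum_i\gamma_i\mbfs{x}^*_i=\mbfs{0}$ with $\gamma\neq\mbfs{0}$, and the perturbation $\lambda_i-\theta\gamma_i$ preserves $\widehat{\mbfs{x}}=\sum_i(\lambda_i-\theta\gamma_i)\mbfs{x}^*_i$ for all $\theta$. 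Choosing the sign of $\theta$ so that the $\mu$-sum does not increase and pushing $\theta$ until a weight first reaches $0$ removes a term. The delicate case is when the sign forced by the $\mu$-sum admits no binding weight (for instance $\sum_i\gamma_i\mu^*_i>0$ but every $\gamma_i\le0$); here I would invoke nonemptiness of $B$, since such a configuration places some $(\mbfs{0},-c)$ with $c>0$ in $\cone(S^*)$, and pairing it against any $\mbfs{x}_0\in B$ through $\mbfs{x}_0^\top\mbfs{x}^*\le\mu^*$ gives $0\le-c$, a contradiction. Thus a term can always be eliminated, contradicting minimality, so at most $d$ vectors are needed. The hypothesis $\widehat{\mbfs{x}}\neq\mbfs{0}$ ensures the resulting representation is nontrivial.
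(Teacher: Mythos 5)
Your proof is correct. Note, however, that the paper does not prove this statement at all: it is imported verbatim (with a patched hypothesis) from Rockafellar's Theorem~17.3, and the paper's only original content here is the observation in Remarks~\ref{rmk:rockafellar} and~\ref{rmk:rockafellar2} that the hypotheses ``$B$ full-dimensional and $(\mbfs{0},0)\notin S^*$'' are needed and are used only through the consequence $(\mbfs{0},0)\notin\conv(S^*)$. Your argument reconstructs the standard proof: the interior point of $B$ gives a functional strictly positive on $S^*\cup\{(\mbfs{0},1)\}$, hence $\cone(S^*\cup\{(\mbfs{0},1)\})$ is closed (conic hull of a compact set whose convex hull misses the origin); strict separation from this closed cone, with the two cases $w<0$ (producing a violating point of $B$) and $w=0$ (producing a recession direction), shows validity is equivalent to membership; and the Carath\'eodory reduction brings the count down to $d$ rather than $d+1$ because the $\mu$-coordinate is only constrained by an inequality. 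You correctly isolate the one genuinely delicate spot in that reduction --- the configuration where the sign of $\theta$ forced by the $\mu$-sum has no weight to drive to zero --- and dispose of it by noting it would place $(\mbfs{0},-c)$ with $c>0$ in $\cone(S^*)$, contradicting $B\neq\emptyset$. This is exactly where the patched hypothesis does its work, so your proof also substantiates the paper's Remark~\ref{rmk:rockafellar2}. No gaps.
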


\begin{remark}\label{rmk:rockafellar}
Theorem \ref{Rockafellarpatched} has a slight difference with the statement of Theorem 17.3 from~\cite{R1970}: the latter is missing the assumption $(\mbfs{0}, 0)\not\in S^*$. This assumption is a subtle detail which is implicitly used in the proof and in all the discussions leading to Theorem 17.3. Moreover, one can prove that without it, the theorem does not hold.

Consider $d=2$ and $S^* = \{(\cos(\theta)-1, \sin(\theta),0)\,:\, \theta\in [0,\pi]\}$, which is compact. Note that $\theta=0$ implies $(\mbfs{0},0) \in S^* $. It can be shown that in this case $B=\{(x_1,x_2)\, :\, x_1\geq 0,\, x_2\leq 0\}$ and thus $B$ is full-dimensional. The valid inequality $x_2 \leq 0$ can be obtained by noting that 
\[\frac{(\cos(\theta)-1)}{\theta} x_1 + \frac{\sin(\theta)}{\theta}x_2 \leq 0 \]
is always valid for $\theta > 0$, and then taking limit $\theta\to 0$. However, $(0,1)$ cannot be obtained as a conic combination of vectors of the form $(\cos(\theta)-1, \sin(\theta))$ $\theta\in [0,\pi]$. 
\end{remark}
\begin{remark}\label{rmk:rockafellar2}
We also note that in Theorem \ref{Rockafellarpatched} we can replace the requirements ``$B$ is full-dimensional and $(\mbfs{0}, 0)\not\in S^*$'' simply with ``$(\mbfs{0}, 0)\not\in \conv(S^*)$''. The proof is exactly the same, as the first two assumptions are used to prove the second. We use of this subtle variation in one proof below.
\end{remark}

To prove Theorem~\ref{thmStdForm}, we write $Q= \bigcup_{\mbfs{\beta} \in D^m} Q_{\mbfs{\beta}}$, where
\begin{align}
Q_{\mbfs{\beta}} :\, =& \left\{(\mbfs{x},\mbfs{y}) \in \mbb{R}^n \times \mbb{R}^m : \|\mbfs{x}\| - \mbfs{\beta}{}^\T \mbfs{y} \le 0\right\} \nonumber \\ 
 =& \left\{(\mbfs{x},\mbfs{y}) \in \mbb{R}^n \times \mbb{R}^m : \mbfs{\gamma}{}^\T \mbfs{x} - \mbfs{\beta}{}^\T \mbfs{y} \le 0~~\forall \ \mbfs{\gamma} \in D^n\right\}. \label{eq:Qbetadescription}
\end{align}
Note that each $Q_{\mbfs{\beta}}$ is convex.
The following lemma is a direct consequence of Theorem~\ref{Rockafellarpatched}.

\begin{lemma}\label{lemPolar}
  Let $\mbfs{\beta} \in D^m$.
  Every tight valid inequality for $Q_{\mbfs{\beta}}$ has the form (possibly after scaling by a positive number) $\mbfs{\gamma}{}^\T \mbfs{x} - \mbfs{\beta}{}^\T \mbfs{y} \le 0 $ for some $\|\mbfs{\gamma}\| \le 1$.
\end{lemma}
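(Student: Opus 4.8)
The plan is to recognize $Q_{\mbfs{\beta}}$ as an instance of the set $B$ in Theorem~\ref{Rockafellarpatched} and then read off the structure of its tight valid inequalities from the conic representation that theorem provides. Working in ambient dimension $d=n+m$ with variable $(\mbfs{x},\mbfs{y})$, the description~\eqref{eq:Qbetadescription} writes $Q_{\mbfs{\beta}}$ as the solution set of the inequalities $(\mbfs{\gamma},-\mbfs{\beta})^\T(\mbfs{x},\mbfs{y}) \le 0$ over $\mbfs{\gamma}\in D^n$. Hence I would take
\[
S^* := \{((\mbfs{\gamma},-\mbfs{\beta}),0)\,:\,\mbfs{\gamma}\in D^n\}\subseteq \mbb{R}^{n+m}\times\mbb{R},
\]
so that $B = Q_{\mbfs{\beta}}$. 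This $S^*$ is compact, being the image of the compact sphere $D^n$ under a continuous map.

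Next I would verify the two hypotheses of Theorem~\ref{Rockafellarpatched}. Every element of $S^*$ has first block $(\mbfs{\gamma},-\mbfs{\beta})$ with $\|\mbfs{\gamma}\|=1$, so $(\mbfs{0},0)\notin S^*$; and $Q_{\mbfs{\beta}}$ is full-dimensional because $(\mbfs{0},2\mbfs{\beta})$ satisfies $\|\mbfs{0}\| = 0 < 2 = \mbfs{\beta}^\T(2\mbfs{\beta})$ strictly, so a whole neighborhood lies in $Q_{\mbfs{\beta}}$. I would then exploit that $Q_{\mbfs{\beta}}$ is a cone containing $\mbfs{0}$: for any valid inequality $\widehat{\mbfs{a}}^\T\mbfs{x}+\widehat{\mbfs{b}}^\T\mbfs{y}\le\widehat{\mu}$ with $(\widehat{\mbfs{a}},\widehat{\mbfs{b}})\neq\mbfs{0}$, scaling points of $Q_{\mbfs{\beta}}$ by $t\ge 0$ forces $\widehat{\mbfs{a}}^\T\mbfs{x}+\widehat{\mbfs{b}}^\T\mbfs{y}\le 0$ on $Q_{\mbfs{\beta}}$ and $\widehat{\mu}\ge 0$, so the supremum of the left-hand side over $Q_{\mbfs{\beta}}$ is $0$, attained at $\mbfs{0}$. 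Consequently a tight valid inequality must have $\widehat{\mu}=0$.

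With $\widehat{\mu}=0\ge 0$, Theorem~\ref{Rockafellarpatched} applies and yields $\mbfs{\gamma}_1,\dots,\mbfs{\gamma}_k\in D^n$ and $\lambda_1,\dots,\lambda_k\ge 0$ with
\[
\widehat{\mbfs{a}} = \sum_{i=1}^k\lambda_i\mbfs{\gamma}_i,\qquad \widehat{\mbfs{b}} = -\Big(\sum_{i=1}^k\lambda_i\Big)\mbfs{\beta}.
\]
Writing $\Lambda := \sum_i\lambda_i$, I would observe $\Lambda>0$ (otherwise $(\widehat{\mbfs{a}},\widehat{\mbfs{b}})=\mbfs{0}$), scale the inequality by $1/\Lambda$, and set $\mbfs{\gamma}:=\widehat{\mbfs{a}}/\Lambda$; the inequality then becomes exactly $\mbfs{\gamma}^\T\mbfs{x}-\mbfs{\beta}^\T\mbfs{y}\le 0$, and the triangle inequality gives $\|\mbfs{\gamma}\| = \|\sum_i\lambda_i\mbfs{\gamma}_i\|/\Lambda \le \sum_i\lambda_i\|\mbfs{\gamma}_i\|/\Lambda = 1$, as required. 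The only genuine subtlety—and the step I would treat most carefully—is the passage from ``tight'' to $\widehat{\mu}=0$ via the homogeneity of $Q_{\mbfs{\beta}}$; once the right-hand side is pinned to $0$, the remainder is a direct invocation of Theorem~\ref{Rockafellarpatched} followed by the one-line triangle-inequality bound.
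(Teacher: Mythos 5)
Your proof is correct and follows exactly the route the paper intends: the paper gives no written proof, stating only that the lemma ``follows directly from Theorem~\ref{Rockafellarpatched}'', and your argument is precisely that derivation carried out in detail (choosing $S^*=\{((\mbfs{\gamma},-\mbfs{\beta}),0):\mbfs{\gamma}\in D^n\}$, using the conic structure of $Q_{\mbfs{\beta}}$ to pin the right-hand side of a tight inequality to $0$, and normalizing the resulting conic combination). Nothing to add.
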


We remark that this lemma does not follow immediately from \eqref{eq:Qbetadescription}; 
Lemma \ref{lemPolar} refers to \emph{every} tight valid inequality, which, in principle, can include inequalities
not explicitly considered in the description \eqref{eq:Qbetadescription}.

We now proceed to the main proof of this section.

\begin{proof}[of Theorem~\ref{thmStdForm}]
  For each $\mbfs{\beta} \in D^m$, there is a hyperplane separating $C$ and $Q_{\mbfs{\beta}}$ because both sets are
  convex and $C$ is $Q$-free.
  For each $\mbfs{\beta} \in D^m$, by Lemma~\ref{lemPolar} we can take the corresponding inequality to be $ \mbfs{\gamma}^\T \mbfs{x} - \mbfs{\beta}^\T \mbfs{y} \ge 0$ with $\|\mbfs{\gamma}\| \leq 1$.
From this discussion, it follows that we can define a function $\gamma : D^m \to \{\mbfs{x}\in \mbb{R}^n: \|\mbfs{x}\|\le 1\}$ such that  for each $\mbfs{\beta}$, we have that $\gamma(\mbfs{\beta})^\T \mbfs{x} - \mbfs{\beta}^\T \mbfs{y} \ge 0$ is valid for $C$ and separates $Q_{\mbfs{\beta}}$ from $C$.
  Thus, 
  \begin{equation}\label{eq:inclusion1}
    C \subseteq  \left\{(\mbfs{x},\mbfs{y}) \in \mbb{R}^n \times \mbb{R}^m : \gamma(\mbfs{\beta})^\T \mbfs{x} - \mbfs{\beta} ^\T \mbfs{y} \ge 0 ~~ \forall ~ \mbfs{\beta} \in D^m\right\}.
  \end{equation}
  Each $Q_{\mbfs{\beta}}$ is separated from the set on the right-hand side of \eqref{eq:inclusion1}, implying that it is $Q$-free. 
  By the maximality of $C$, we have that \eqref{eq:inclusion1} is an equality.

  We now show that we can further restrict $\gamma(\mbfs{\beta})$ to have unit norm since $C$ is a maximal $Q$-free set\footnote{Note that Lemma \ref{lemPolar} does not directly imply that $\gamma(\mbfs{\beta})$ can be assumed to have unit norm. Moreover, one could produce a (not necessarily maximal) $Q$-free set with $\gamma$ that satisfies $\|\gamma(\mbfs{\beta})\| < 1$ for some $\mbfs{\beta}$.}.
  For this, consider the pair of valid inequalities for $C$:
 \[
    \gamma(\mbfs{\beta})^\T \mbfs{x} - \mbfs{\beta}^\T \mbfs{y} \ge 0 \qquad\text{and}\qquad
    \gamma(-\mbfs{\beta})^\T \mbfs{x} - (-\mbfs{\beta})^\T \mbfs{y} \ge 0 
  \]
  for each $\mbfs{\beta} \in D^m$.
  Multiplying the first inequality by $\lambda + 1$, the second one by $\lambda$ with $\lambda \geq 0$, and adding them we obtain the following valid inequality for $C$:
  \[
   \big(\lambda(\gamma(\mbfs{\beta}) + \gamma(-\mbfs{\beta})) + \gamma(\mbfs{\beta})\big){}^\T \mbfs{x} - \mbfs{\beta}^\T \mbfs{y} \ge 0.
  \]
  Notice that $\gamma(\mbfs{\beta}) + \gamma(-\mbfs{\beta}) \neq \mbfs{0}$ for each $\mbfs{\beta}$ as otherwise $-( \gamma(-\mbfs{\beta})^\T \mbfs{x} - (-\mbfs{\beta})^\T \mbfs{y} ) =  \gamma(\mbfs{\beta})^\T \mbfs{x} - \mbfs{\beta}^\T \mbfs{y} \ge 0$ is valid for $C$ implying that $C$ satisfies an equation and is not full-dimensional; this is a contradiction. 
So, there exists $\lambda(\mbfs{\beta}) \geq 0$ such that $\|\lambda(\mbfs{\beta}) (\gamma(\mbfs{\beta}) + \gamma(-\mbfs{\beta})) + \gamma(\mbfs{\beta})\| = 1$. 
  Define $\Gamma : D^m \to D^n$ by
  \[
    \Gamma(\mbfs{\beta}) := \lambda(\mbfs{\beta}) (\gamma(\mbfs{\beta}) + \gamma(-\mbfs{\beta})) + \gamma(\mbfs{\beta}).
  \]
  Therefore,
  \begin{equation} \label{eq:inclusion}
    C \subseteq  \left\{(\mbfs{x},\mbfs{y}) \in \mbb{R}^n \times \mbb{R}^m : \Gamma(\mbfs{\beta})^\T \mbfs{x} - \mbfs{\beta} ^\T \mbfs{y} \ge 0~~ \forall ~ \mbfs{\beta} \in D^m\right\} = C_\Gamma.
  \end{equation}
  Again using that the right-hand side in \eqref{eq:inclusion} is $Q$-free and $C$ is maximal, we conclude that \eqref{eq:inclusion} is an equality.  
\end{proof}


\section{Preliminary lemmata}\label{secNEF}

In this section we collect a variety of lemmata to prove our characterization of maximality (Theorem~\ref{thm:continuous}). 
Each lemma is classified as a result from convex analysis or about non-expansive functions.

\subsection{Preliminaries about convexity}

Many of the results in this subsection follow from known results in convexity theory.
When proving these results, we often use the fact that if $\Gamma$ is continuous, then $\{(\Gamma(\mbfs{\beta}),-\mbfs{\beta})\ : \mbfs{\beta} \in D^m\}$ is compact.
Consequently, 
\[
\conv(\{(\Gamma(\mbfs{\beta}),-\mbfs{\beta})\ : \mbfs{\beta} \in D^m\}) = \overline{\conv}(\{(\Gamma(\mbfs{\beta}),-\mbfs{\beta})\ : \mbfs{\beta} \in D^m\}).
\]
See~\cite[Theorem 17.2]{R1970}.
We begin by providing conditions under which $C_\Gamma$ is full-dimensional. In what follows, we use $C_{\Gamma}^{\circ}$ to denote the reverse polar cone of $C_\Gamma$, i.e., the set of coefficients corresponding to tight valid inequalities for $C_{\Gamma}$:
\[
C^{\circ}_{\Gamma} := \left\{\mbfs{c}\in \mbb{R}^d:\ \mbfs{c}^\top\mbfs{x} \ge 0 \quad \forall ~ \mbfs{x} \in C_{\Gamma}\right\}.
\]

\begin{lemma}\label{lemNoZeroSameCoeff}
Let $\Gamma:D^m \to D^n$ be continuous and define $C_{\Gamma}$ as in~\eqref{Cgammadef}.
Then $C_{\Gamma}$ is full-dimensional if and only if $(\mbfs{0}, \mbfs{0}) \not\in \conv(\{(\Gamma(\mbfs{\beta}),-\mbfs{\beta})\ : \mbfs{\beta} \in D^m\})$.
\end{lemma}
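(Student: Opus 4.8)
The statement is an "if and only if" about full-dimensionality of $C_\Gamma$, and the natural object linking the two sides is the reverse polar cone $C_\Gamma^\circ$, which the excerpt has just introduced. The plan is to characterize $C_\Gamma^\circ$ explicitly and then use the standard fact that a closed convex cone $C_\Gamma$ is full-dimensional if and only if its polar $C_\Gamma^\circ$ is pointed (contains no line), equivalently $(\mbfs{0},\mbfs{0})$ is not a nontrivial combination of generators. Since $C_\Gamma$ is a cone (being defined by homogeneous inequalities $\Gamma(\mbfs{\beta})^\T\mbfs{x} - \mbfs{\beta}^\T\mbfs{y}\ge 0$), its reverse polar is exactly the closed conic hull of the coefficient vectors $(\Gamma(\mbfs{\beta}),-\mbfs{\beta})$.

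**The two directions.** First I would prove the contrapositive of the forward direction: if $(\mbfs{0},\mbfs{0}) \in \conv(\{(\Gamma(\mbfs{\beta}),-\mbfs{\beta}):\mbfs{\beta}\in D^m\})$, then $C_\Gamma$ is not full-dimensional. From a convex combination $\sum_i \lambda_i (\Gamma(\mbfs{\beta}^i),-\mbfs{\beta}^i) = (\mbfs{0},\mbfs{0})$ with $\lambda_i > 0$, $\sum\lambda_i = 1$, one reads off that the sum of the corresponding valid inequalities $\Gamma(\mbfs{\beta}^i)^\T\mbfs{x} - (\mbfs{\beta}^i)^\T\mbfs{y}\ge 0$ yields $0 \ge 0$; hence on $C_\Gamma$ each of these nonnegative terms must vanish, forcing $C_\Gamma$ into the intersection of the hyperplanes $\Gamma(\mbfs{\beta}^i)^\T\mbfs{x} = (\mbfs{\beta}^i)^\T\mbfs{y}$. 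Since not all the $(\Gamma(\mbfs{\beta}^i),-\mbfs{\beta}^i)$ can be zero (as $\|\mbfs{\beta}^i\|=1$), at least one hyperplane is proper, so $C_\Gamma$ lies in a proper subspace and is not full-dimensional. For the reverse direction, I would assume $(\mbfs{0},\mbfs{0})\notin \conv(\{(\Gamma(\mbfs{\beta}),-\mbfs{\beta}):\mbfs{\beta}\in D^m\})$ and invoke continuity of $\Gamma$ so that the set $\{(\Gamma(\mbfs{\beta}),-\mbfs{\beta}):\mbfs{\beta}\in D^m\}$ is compact; by the remark preceding the lemma, its convex hull is closed and equals its closed convex hull. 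Then a separating-hyperplane argument produces a single vector $(\mbfs{x}^*,\mbfs{y}^*)$ with $\Gamma(\mbfs{\beta})^\T\mbfs{x}^* - \mbfs{\beta}^\T\mbfs{y}^* \ge \delta > 0$ for all $\mbfs{\beta}$, i.e.\ an interior point of $C_\Gamma$, giving full-dimensionality.

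**Using the Rockafellar/separation machinery.** The cleanest route for the reverse direction is to strictly separate the origin from the compact convex hull: there is a hyperplane with normal $(\mbfs{x}^*,\mbfs{y}^*)$ and some $\delta>0$ such that $(\mbfs{x}^*,\mbfs{y}^*)^\T(\Gamma(\mbfs{\beta}),-\mbfs{\beta}) \ge \delta$ for every $\mbfs{\beta}\in D^m$, while $(\mbfs{x}^*,\mbfs{y}^*)^\T(\mbfs{0},\mbfs{0}) = 0 < \delta$. This inequality says precisely that $(\mbfs{x}^*,\mbfs{y}^*)$ satisfies all defining inequalities of $C_\Gamma$ strictly and uniformly, so a small ball around it lies in $C_\Gamma$. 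Compactness (hence the uniform $\delta$) is what converts pointwise strict feasibility into an open ball, and this is exactly where continuity of $\Gamma$ is used.

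**Anticipated main obstacle.** The routine parts are the separation argument and the combinatorial bookkeeping; the subtle point is handling the closure/compactness carefully so that $\conv$ can be used in place of $\overline{\conv}$ throughout. Without continuity the hull need not be closed, and $(\mbfs{0},\mbfs{0})$ could lie in the closure but not the hull itself, which would break the strict separation; the lemma's continuity hypothesis is precisely what rules this out, and I would make sure to cite the compactness observation (and \cite[Theorem 17.2]{R1970}) at exactly the step where I pass from $\overline{\conv}$ to $\conv$. A secondary care point in the forward direction is confirming that at least one of the vanishing inequalities is genuinely nontrivial, which follows because the $\mbfs{y}$-components $-\mbfs{\beta}^i$ are unit vectors and cannot all be zero.
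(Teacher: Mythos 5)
Your proof is correct. The forward direction is essentially the paper's argument: from a convex combination $\sum_i\lambda_i(\Gamma(\mbfs{\beta}^i),-\mbfs{\beta}^i)=(\mbfs{0},\mbfs{0})$ with $\lambda_i>0$, the weighted sum of the valid inequalities is identically zero while each summand is nonnegative on $C_\Gamma$, so each vanishes and $C_\Gamma$ lies in a proper hyperplane (nontriviality coming from $\|\mbfs{\beta}^i\|=1$, exactly as the paper notes via $(\Gamma(\mbfs{\beta}^k),-\mbfs{\beta}^k)\neq(\mbfs{0},\mbfs{0})$). Your reverse direction, however, takes a genuinely different and more elementary route: you strictly separate $(\mbfs{0},\mbfs{0})$ from the compact set $\conv(\{(\Gamma(\mbfs{\beta}),-\mbfs{\beta})\})$ to produce a point $(\mbfs{x}^*,\mbfs{y}^*)$ satisfying every defining inequality with a uniform slack $\delta>0$, hence an interior point of $C_\Gamma$. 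The paper instead argues by contradiction: non-full-dimensionality yields a linear equation valid on $C_\Gamma$, both signs of its coefficient vector lie in the reverse polar cone $C_\Gamma^\circ$, and the patched Rockafellar theorem (Remark~\ref{rmk:rockafellar2}) converts these into conic combinations of the generators, whose sum places $(\mbfs{0},\mbfs{0})$ in the convex hull. Your separation argument is self-contained and avoids invoking Theorem~\ref{Rockafellarpatched} altogether, which is arguably cleaner here; the paper's route has the advantage of exercising the polar-cone machinery it needs elsewhere (Lemmas~\ref{lemCCinequalities} and~\ref{lemPointed}). Your identification of continuity/compactness as the point where $\conv$ can replace $\overline{\conv}$ is exactly the subtlety the paper flags.
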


\begin{proof}
($\Rightarrow$) Assume that $C_{\Gamma}$ is full-dimensional.
Furthermore, assume to the contrary that $(\mbfs{0}, \mbfs{0}) \in \conv \{(\Gamma(\mbfs{\beta}),-\mbfs{\beta})\ : \mbfs{\beta} \in D^m\}$.
The set $\{(\Gamma(\mbfs{\beta}),-\mbfs{\beta})\ : \mbfs{\beta} \in D^m\}$ is compact because $\Gamma$ is continuous, so its closed convex hull is equal to its convex hull.
Hence, there exists a set $\{\mbfs{\beta}^i\}_{i=1}^k \subseteq D^m$ and numbers $\lambda_1, \dotsc, \lambda_k > 0$ such that
\[
\sum_{i=1}^k \lambda_i (\Gamma(\mbfs{\beta}^i), - \mbfs{\beta}^i) = (\mbfs{0}, \mbfs{0}). 
\]
Note that $k>1$, as $\mbfs{\beta}\neq \mbfs{0}$ for all $\mbfs{\beta}\in D^m$.  This implies
\[
\mbfs{\beta}^k = - \sum_{i=1}^{k-1} \frac{\lambda_i}{\lambda_k} \mbfs{\beta}^i 
\quad \text{and} \quad
\Gamma(\mbfs{\beta}^k) = - \sum_{i=1}^{k-1} \frac{\lambda_i}{\lambda_k} \Gamma(\mbfs{\beta}^i).
\]
The valid inequality $\Gamma(\mbfs{\beta}^k)^\top \mbfs{x} - \mbfs{\beta}^k \mbfs{y} \geq 0$ can then be rewritten as
\[
-\left(\sum_{i=1}^{k-1} \frac{\lambda_i}{\lambda_k} \Gamma(\mbfs{\beta}^i) \right){}^\T \mbfs{x} \geq -\left(\sum_{i=1}^{k-1} \frac{\lambda_i}{\lambda_k} \mbfs{\beta}^i\right){}^\T \mbfs{y}.
\]
On the other hand, taking a conic combination of the inequalities given by $\{\mbfs{\beta}^1, \dotsc, \mbfs{\beta}^{k-1}\}$ using the weights $\sfrac{\lambda_i}{\lambda_k} > 0$ we obtain the valid inequality
\[
\left(\sum_{i=1}^{k-1} \frac{\lambda_i}{\lambda_k} \Gamma(\mbfs{\beta}^i) \right){}^\T \mbfs{x} \ge \left(\sum_{i=1}^{k-1} \frac{\lambda_i}{\lambda_k} \mbfs{\beta}^i\right){}^\T \mbfs{y}.
\]
Hence, $C_\Gamma \subseteq \{(\mbfs{x},\mbfs{y}) \, :\, \Gamma(\mbfs{\beta}^k)^\top \mbfs{x} - \mbfs{\beta}^k \mbfs{y} = 0 \}$. 
Since $(\Gamma(\mbfs{\beta}^k), - \mbfs{\beta}^k) \neq (\mbfs{0}, \mbfs{0})$, we conclude that $C_\Gamma$ is contained in a hyperplane. 
This is a contradiction.

\medskip

($\Leftarrow$) Assume that $(\mbfs{0}, \mbfs{0}) \not\in \conv(\{(\Gamma(\mbfs{\beta}),-\mbfs{\beta})\ : \mbfs{\beta} \in D^m\})$.
Assume to the contrary that $C_{\Gamma}$ is not full-dimensional.
Hence, there exists a non-zero vector $(\mbfs{g}, \mbfs{b} ) \in \mbb{R}^n\times \mbb{R}^m$ such that $\mbfs{g}^\top\mbfs{x}- \mbfs{b}^\top \mbfs{y} = 0$ for all $(\mbfs{x}, \mbfs{y}) \in C_{\Gamma}$; note that the right-hand side is $0$ because $(\mbfs{0}, \mbfs{0}) \in C_{\Gamma}$.
In other words, both $\mbfs{g}^\top\mbfs{x}- \mbfs{b}^\top \mbfs{y} \ge 0$ and $-\mbfs{g}^\top\mbfs{x}- (-\mbfs{b})^\top \mbfs{y} \ge 0$ are valid for $C_{\Gamma}$, so $(\mbfs{g}, -\mbfs{b}), (-\mbfs{g}, \mbfs{b}) \in C^{\circ}_{\Gamma}$.
Since $(\mbfs{0}, \mbfs{0}) \not\in \conv(\{(\Gamma(\mbfs{\beta}),-\mbfs{\beta})\ : \mbfs{\beta} \in D^m\})$, both $(\mbfs{g}, -\mbfs{b})$ and $(-\mbfs{g}, \mbfs{b}) $ are in $\cone(\{(\Gamma(\mbfs{\beta}),-\mbfs{\beta})\ : \mbfs{\beta} \in D^m\})$ (see Remark \ref{rmk:rockafellar2}).
So, $(\mbfs{g}, -\mbfs{b})$ and $(-\mbfs{g}, \mbfs{b}) $ are conic combinations of vectors in $\{(\Gamma(\mbfs{\beta}),-\mbfs{\beta})\ : \mbfs{\beta} \in D^m\}$.
However, after properly scaling $(\mbfs{g}, -\mbfs{b})$ and $(-\mbfs{g}, \mbfs{b}) $ by positive numbers, we see that $(\mbfs{0}, \mbfs{0}) \in \conv(\{(\Gamma(\mbfs{\beta}),-\mbfs{\beta})\ : \mbfs{\beta} \in D^m\})$, which is a contradiction.
\end{proof}

The next lemma provides a way of describing all the valid inequalities of $C_\Gamma$ when $\Gamma$ is continuous.
\begin{lemma}\label{lemCCinequalities}
Let $\Gamma:D^m \to D^n$ be continuous and define $C_\Gamma$ as in \eqref{Cgammadef}.
Assume that $(\mbfs{0}, \mbfs{0}) \not\in \conv(\{(\Gamma(\mbfs{\beta}),-\mbfs{\beta})\ : \mbfs{\beta} \in D^m\})$.
If $\overline{\mbfs{\gamma}}{}^\T \mbfs{x} - \overline{\mbfs{\beta}}{}^\T \mbfs{y} \ge 0$ is valid for $C_\Gamma$, then there exist $\mbfs{\beta}^1, \dotsc, \mbfs{\beta}^k$ such that $(\overline{\mbfs{\gamma}\vphantom{\beta}}, -\overline{\mbfs{\beta}}) \in \cone(\{(\Gamma(\mbfs{\beta})^i, -\mbfs{\beta}^i):\ i \in \{1, \dotsc, k\}\})$. 
\end{lemma}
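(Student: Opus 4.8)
The goal is to identify the reverse polar cone of $C_\Gamma$ with $\cone(\{(\Gamma(\mbfs{\beta}), -\mbfs{\beta}):\ \mbfs{\beta}\in D^m\})$ on the relevant class of coefficient vectors. The plan is to invoke the patched version of Rockafellar's theorem (Theorem~\ref{Rockafellarpatched}), in the form of Remark~\ref{rmk:rockafellar2} that only requires $(\mbfs{0},0)\notin\conv(S^*)$ rather than full-dimensionality together with $(\mbfs{0},0)\notin S^*$. The whole argument reduces to casting $C_\Gamma$ into the standard form of the set $B$ appearing there and reading off the conclusion; no genuinely new estimate is needed.

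First I would set $\mbfs{z} := (\mbfs{x},\mbfs{y}) \in \mbb{R}^{n+m}$ and rewrite each defining inequality $\Gamma(\mbfs{\beta})^\T\mbfs{x} - \mbfs{\beta}^\T\mbfs{y} \ge 0$ in the ``$\le$'' form $\mbfs{z}^\T(-\Gamma(\mbfs{\beta}), \mbfs{\beta}) \le 0$. This identifies $C_\Gamma$ with the set $B$ of Theorem~\ref{Rockafellarpatched} for the choice
\[
S^* := \{((-\Gamma(\mbfs{\beta}), \mbfs{\beta}),\, 0):\ \mbfs{\beta}\in D^m\} \subseteq (\mbb{R}^{n+m})\times\mbb{R},
\]
whose final (``$\mu^*$'') coordinate is identically $0$. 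I would then verify the hypotheses of the Remark~\ref{rmk:rockafellar2} variant. Compactness of $S^*$ follows because $\mbfs{\beta}\mapsto((-\Gamma(\mbfs{\beta}),\mbfs{\beta}),0)$ is continuous and $D^m$ is compact. For the condition $(\mbfs{0},0)\notin\conv(S^*)$, I note that since every element of $S^*$ has last coordinate $0$, so does every convex combination; hence the condition reduces to $(\mbfs{0},\mbfs{0})\notin\conv(\{(-\Gamma(\mbfs{\beta}),\mbfs{\beta}):\ \mbfs{\beta}\in D^m\})$. Because negating every generator negates the convex hull while fixing the origin, this is exactly the standing hypothesis $(\mbfs{0},\mbfs{0})\notin\conv(\{(\Gamma(\mbfs{\beta}),-\mbfs{\beta}):\ \mbfs{\beta}\in D^m\})$.

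With the setup in place, I would apply the theorem to the given inequality, rewritten as $\mbfs{z}^\T(-\overline{\mbfs{\gamma}}, \overline{\mbfs{\beta}}) \le 0$, i.e.\ with $\widehat{\mbfs{x}} = (-\overline{\mbfs{\gamma}}, \overline{\mbfs{\beta}})$ and $\widehat{\mu} = 0$. If $(\overline{\mbfs{\gamma}},\overline{\mbfs{\beta}})=(\mbfs{0},\mbfs{0})$ the claim is immediate, since the origin always lies in a conic hull; otherwise $\widehat{\mbfs{x}}\neq\mbfs{0}$ as the theorem demands. The theorem then yields $\mbfs{\beta}^1,\dots,\mbfs{\beta}^k$ and $\lambda_1,\dots,\lambda_k\ge 0$ with $(-\overline{\mbfs{\gamma}}, \overline{\mbfs{\beta}}) = \sum_{i=1}^k \lambda_i(-\Gamma(\mbfs{\beta}^i), \mbfs{\beta}^i)$, the side condition on the $\mu^*$-coordinates being vacuous because all of them are $0$. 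Negating both sides gives $(\overline{\mbfs{\gamma}}, -\overline{\mbfs{\beta}}) = \sum_i \lambda_i(\Gamma(\mbfs{\beta}^i), -\mbfs{\beta}^i)$, which is precisely membership in $\cone(\{(\Gamma(\mbfs{\beta}^i), -\mbfs{\beta}^i):\ i\in\{1,\dots,k\}\})$.

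The only real obstacle is bookkeeping rather than mathematics: one must confirm that collapsing all $\mu^*$-coordinates to zero turns $(\mbfs{0},0)\notin\conv(S^*)$ exactly into the lemma's hypothesis, and that the inequality on the $\mu^*$-coordinates in Theorem~\ref{Rockafellarpatched} is automatically satisfied because every right-hand side equals $0$. Dispatching the degenerate inequality $(\overline{\mbfs{\gamma}},\overline{\mbfs{\beta}})=(\mbfs{0},\mbfs{0})$ before applying the theorem is the one extra case to handle.
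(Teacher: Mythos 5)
Your proposal is correct and follows essentially the same route as the paper, which omits the proof precisely because it reduces to applying Theorem~\ref{Rockafellarpatched} (in the form of Remark~\ref{rmk:rockafellar2}, or equivalently via full-dimensionality from Lemma~\ref{lemNoZeroSameCoeff}) to the compact coefficient set $\{((-\Gamma(\mbfs{\beta}),\mbfs{\beta}),0):\mbfs{\beta}\in D^m\}$. Your bookkeeping of the sign flip, the vacuous $\mu^*$-condition, and the degenerate case $(\overline{\mbfs{\gamma}},\overline{\mbfs{\beta}})=(\mbfs{0},\mbfs{0})$ is all sound.
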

We omit the proof of Lemma~\ref{lemCCinequalities} as it follows directly from Lemma \ref{lemNoZeroSameCoeff} and Theorem~\ref{Rockafellarpatched}.

The following lemma gives the representation of $C_{\Gamma}$ that we will use to prove maximality in Subsection~\ref{secMainThmNecessary}.
For a pointed closed convex cone $K\subseteq \mbb{R}^d$, we say that $\mbfs{v} \in K$ is an {\it exposed ray} if $\cone(\mbfs{v})$ is an exposed ray of $K$. 

\begin{lemma}\label{lemPointed}
Let $\Gamma:D^m \to D^n$ be continuous and define $C_{\Gamma}$ as in~\eqref{Cgammadef}.
Assume that
\(
(\mbfs{0}, \mbfs{0}) \not\in \conv(\{(\Gamma(\mbfs{\beta}),-\mbfs{\beta})\ : \mbfs{\beta} \in D^m\}).
\)
Then
\begin{align*}
  C_{\Gamma} &= \left\{(\mbfs{x}, \mbfs{y}) \in \mbb{R}^n\times \mbb{R}^m:\ \Gamma(\mbfs{\beta})^\top \mbfs{x} -  \mbfs{\beta}{}^\top \mbfs{y}\ge 0 \quad\forall ~ (\Gamma(\mbfs{\beta}),-\mbfs{\beta}) \in K\right\}\\
  &= \left\{(\mbfs{x}, \mbfs{y}) \in \mbb{R}^n\times \mbb{R}^m:\ \Gamma(\mbfs{\beta})^\top \mbfs{x} -  \mbfs{\beta}{}^\top \mbfs{y}\ge 0 \quad\forall ~ (\Gamma(\mbfs{\beta}),-\mbfs{\beta}) \in I\right\}.
\end{align*}
where $I$ is the set of exposed rays of $K:= \cone(\left\{(\Gamma(\mbfs{\beta}),-\mbfs{\beta})\,: \mbfs{\beta} \in D^m\,\right\})$.
\end{lemma}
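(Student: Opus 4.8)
The plan is to recognize $C_\Gamma$ as the dual cone of $K$ and then show that $K$ is already recovered, up to closure, from its exposed rays alone; the whole statement then follows by dualizing.

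First I would set $T := \{(\Gamma(\mbfs{\beta}),-\mbfs{\beta}) : \mbfs{\beta}\in D^m\}$. Since $\Gamma$ is continuous and $D^m$ is compact, $T$ is compact, and by hypothesis $(\mbfs{0},\mbfs{0})\notin\conv(T)$. I would invoke the standard fact (the same one underlying the proof of Theorem~\ref{thmMax}) that the conic hull of a compact set whose convex hull avoids the origin is closed; pointedness follows as well, since a nonzero $\mbfs{v}$ with $\mbfs{v},-\mbfs{v}\in K$ would express $\mbfs{0}$ as a conic combination of elements of $T$ with positive total weight, and normalizing would place $\mbfs{0}\in\conv(T)$, a contradiction. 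Hence $K=\cone(T)$ is a closed pointed cone.

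Next comes the first equality. Every generator $(\Gamma(\mbfs{\beta}),-\mbfs{\beta})$ lies in $K$, so the index set of the middle description is all of $D^m$; conversely, a point $(\mbfs{x},\mbfs{y})$ satisfies $\langle \mbfs{c},(\mbfs{x},\mbfs{y})\rangle\ge 0$ for every generator if and only if it does so for every conic combination of generators, that is, for every $\mbfs{c}\in K$. Thus $C_\Gamma=K^{*}$, the dual cone, which is precisely the first displayed set. For the second equality the crux is to prove $K=\overline{\cone}(I)$, where $I$ is the set of exposed rays. The inclusion $\overline{\cone}(I)\subseteq K$ is immediate because $I\subseteq K$ and $K$ is closed. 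For the reverse inclusion I would use that, since $K$ is closed and pointed, its dual $K^{*}$ is full-dimensional, so I may pick $\mbfs{u}\in\inte(K^{*})$ and form the compact base $B:=\{\mbfs{v}\in K:\langle\mbfs{u},\mbfs{v}\rangle=1\}$ with $K=\cone(B)$. Under the correspondence between rays of $K$ and points of $B$, extreme rays match extreme points and exposed rays match exposed points. By Straszewicz's theorem the exposed points are dense in the extreme points of $B$, and Krein--Milman gives $B=\overline{\conv}(\ext B)$; combining these, $B$ is the closed convex hull of its exposed points, so $K=\cone(B)\subseteq\overline{\cone}(I)$, yielding $K=\overline{\cone}(I)$.

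Finally I would dualize: from $K=\overline{\cone}(I)$ one gets $K^{*}=\bigl(\overline{\cone}(I)\bigr)^{*}=\{(\mbfs{x},\mbfs{y}):\langle\mbfs{c},(\mbfs{x},\mbfs{y})\rangle\ge 0\ \forall\,\mbfs{c}\in I\}$, which is exactly the last displayed set, and combined with $C_\Gamma=K^{*}$ this delivers the second equality. The main obstacle is the step $K=\overline{\cone}(I)$: one must carefully translate between exposed and extreme rays of the cone $K$ and exposed and extreme points of a compact base, and then invoke Straszewicz's density result so that the possibly proper subset of exposed rays still generates the full cone after taking closure. Everything else is either a routine compactness argument or a standard duality manipulation.
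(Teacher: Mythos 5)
Your proof is correct and follows essentially the same route as the paper's: both reduce the second equality to showing $K=\overline{\cone}(I)$ for a closed pointed cone and then invoke the Minkowski/Straszewicz machinery (the paper cites \cite[Theorems 18.5 and 18.6]{R1970} where you reprove it via a compact base), and both treat the first equality as the duality $C_\Gamma=K^{*}$. The only cosmetic difference is that you obtain closedness and pointedness of $K$ directly from compactness of $\{(\Gamma(\mbfs{\beta}),-\mbfs{\beta})\}$ and the origin-avoidance hypothesis, whereas the paper identifies $K$ with $C_\Gamma^\circ$ via Lemma~\ref{lemCCinequalities}.
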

\proof
The first equation follows from Lemma~\ref{lemCCinequalities}.
To prove the second equation it is sufficient to show that $K = \overline{\cone}(I)$.
Define
\(
H := \left\{(\Gamma(\mbfs{\beta}),-\mbfs{\beta})\,: \mbfs{\beta} \in D^m\,\right\}.
\)
Notice that $K = \cone(H) = \cone(\conv(H))  = C^{\circ}_{\Gamma}$, where the second equation follows by definition and the third equation follows from Lemma~\ref{lemCCinequalities}.
Observe that $K$ is pointed because $(\mbfs{0}, \mbfs{0}) \not \in \conv(H)$.
Furthermore, $K$ is closed because $C^{\circ}_{\Gamma}$ is closed.
Given that $K$ is a closed and pointed convex cone, it follows that $K = \overline{\cone}(I)$; see~\cite[Theorems 18.5 and 18.6]{R1970}.
\endproof

\subsection{Preliminaries about non-expansive functions}

\begin{lemma}\label{lemIsoSets}
Let $\Gamma:D^m \to D^n$ be non-expansive.
Let $I \subseteq D^m$ be a finite set of pairwise isometric points. 
The following properties hold true:
\begin{enumerate}[leftmargin = *]
\item\label{propLin1} If $\sum_{\mbfs{\beta} \in I}\epsilon_{\mbfs{\beta}}\mbfs{\beta} \in D^m$, where $\epsilon_{\mbfs{\beta}} \ge 0$ for each $\mbfs{\beta} \in I$, then  $\sum_{\mbfs{\beta} \in I}\epsilon_{\mbfs{\beta}}\Gamma(\mbfs{\beta})\in D^n$ and $\Gamma(\sum_{\mbfs{\beta} \in I}\epsilon_{\mbfs{\beta}}\mbfs{\beta}) = \sum_{\mbfs{\beta} \in I}\epsilon_{\mbfs{\beta}}\Gamma(\mbfs{\beta} )$.
\item\label{propLin2} If $\sum_{\mbfs{\beta} \in I}\epsilon_{\mbfs{\beta}}\Gamma(\mbfs{\beta}) \in D^n$, where $\epsilon_{\mbfs{\beta}} \ge 0$ for each $\mbfs{\beta} \in I$, then $\sum_{\mbfs{\beta} \in I}\epsilon_{\mbfs{\beta}}\mbfs{\beta}\in D^m$ and $\Gamma(\sum_{\mbfs{\beta} \in I}\epsilon_{\mbfs{\beta}}\mbfs{\beta}) = \sum_{\mbfs{\beta} \in I}\epsilon_{\mbfs{\beta}}\Gamma(\mbfs{\beta} )$.

\end{enumerate}
\end{lemma}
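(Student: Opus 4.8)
The plan is to handle both parts through one unconditional norm identity followed by a single Cauchy--Schwarz argument. Enumerate $I = \{\mbfs{\beta}^1, \dots, \mbfs{\beta}^r\}$, abbreviate $\epsilon_i := \epsilon_{\mbfs{\beta}^i}$, and set $\mbfs{v} := \sum_{i} \epsilon_i \mbfs{\beta}^i$ and $\mbfs{w} := \sum_i \epsilon_i \Gamma(\mbfs{\beta}^i)$. The first thing I would record, before invoking either hypothesis, is that $\|\mbfs{v}\| = \|\mbfs{w}\|$. Indeed, expanding the squared norms gives $\|\mbfs{v}\|^2 = \sum_{i,j} \epsilon_i \epsilon_j (\mbfs{\beta}^i)^\T \mbfs{\beta}^j$ and $\|\mbfs{w}\|^2 = \sum_{i,j} \epsilon_i \epsilon_j \Gamma(\mbfs{\beta}^i)^\T \Gamma(\mbfs{\beta}^j)$; the diagonal terms agree because every $\mbfs{\beta}^i$ and every $\Gamma(\mbfs{\beta}^i)$ is a unit vector, and the off-diagonal terms agree because $I$ is pairwise isometric, i.e.\ $(\mbfs{\beta}^i)^\T \mbfs{\beta}^j = \Gamma(\mbfs{\beta}^i)^\T \Gamma(\mbfs{\beta}^j)$ by the inner-product characterization recorded just after the definition of non-expansive.

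For Property~\ref{propLin1}, the hypothesis $\mbfs{v} \in D^m$ means $\|\mbfs{v}\| = 1$, so the norm identity immediately gives $\|\mbfs{w}\| = 1$, i.e.\ $\mbfs{w} \in D^n$; this settles the first assertion. For the equality $\Gamma(\mbfs{v}) = \mbfs{w}$ I would bound $\Gamma(\mbfs{v})^\T \mbfs{w}$ from both sides. Since $\mbfs{v}, \mbfs{\beta}^i \in D^m$ and $\Gamma$ is non-expansive, $\Gamma(\mbfs{v})^\T \Gamma(\mbfs{\beta}^i) \ge \mbfs{v}^\T \mbfs{\beta}^i$ for each $i$; multiplying by $\epsilon_i \ge 0$ and summing yields $\Gamma(\mbfs{v})^\T \mbfs{w} \ge \mbfs{v}^\T \mbfs{v} = 1$. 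On the other hand, Cauchy--Schwarz gives $\Gamma(\mbfs{v})^\T \mbfs{w} \le \|\Gamma(\mbfs{v})\|\,\|\mbfs{w}\| = 1$. Hence equality holds in Cauchy--Schwarz with a positive inner product, and since $\Gamma(\mbfs{v})$ and $\mbfs{w}$ are both unit vectors this forces $\Gamma(\mbfs{v}) = \mbfs{w}$.

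Property~\ref{propLin2} then reduces to Property~\ref{propLin1}. The hypothesis $\mbfs{w} \in D^n$ gives $\|\mbfs{w}\| = 1$, so the norm identity yields $\|\mbfs{v}\| = 1$, i.e.\ $\mbfs{v} = \sum_i \epsilon_i \mbfs{\beta}^i \in D^m$, which is exactly the hypothesis of Property~\ref{propLin1}. Applying that part gives $\Gamma(\mbfs{v}) = \sum_i \epsilon_i \Gamma(\mbfs{\beta}^i) = \mbfs{w}$, completing both assertions of Property~\ref{propLin2}.

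The substantive step, and the only one requiring insight rather than bookkeeping, is the Cauchy--Schwarz sandwich in Property~\ref{propLin1}: the interplay between non-expansiveness (which produces the lower bound, using that the coefficients are nonnegative) and the Cauchy--Schwarz upper bound is precisely what pins down the value of $\Gamma$ on the conic combination. The norm identity and the reduction of Property~\ref{propLin2} to Property~\ref{propLin1} are then routine.
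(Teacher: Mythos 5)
Your proposal is correct and follows essentially the same route as the paper's proof: the same isometry-based norm identity $\|\mbfs{v}\|=\|\mbfs{w}\|$, followed by the same sandwich of the non-expansivity lower bound against the Cauchy--Schwarz upper bound to force $\Gamma(\mbfs{v})=\mbfs{w}$. The only cosmetic difference is that you reduce Property~2 to Property~1, whereas the paper runs the final equality argument for both cases simultaneously.
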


\begin{proof}
Set $\widehat{\mbfs{\beta}} := \sum_{\mbfs{\beta} \in I}\epsilon_{\mbfs{\beta}}\mbfs{\beta} $.
Using the isometry of points in $I$, we have
\[
 \|\sum_{\mbfs{\beta} \in I} \epsilon_{\mbfs{\beta}} \Gamma(\mbfs{\beta}) \|^2 
 = \sum_{\mbfs{\beta}, \mbfs{\beta}' \in I} \epsilon_{\mbfs{\beta}}\epsilon_{\mbfs{\beta}'} \Gamma(\mbfs{\beta}){}^\T \Gamma(\mbfs{\beta}') \\[.1 cm]
 = \sum_{\mbfs{\beta}, \mbfs{\beta}' \in I} \epsilon_{\mbfs{\beta}}\epsilon_{\mbfs{\beta}'} \mbfs{\beta}^\T \mbfs{\beta}' 
 =  \|\widehat{\mbfs{\beta}}\|^2. 
\]
In the case of Property{\em~\ref{propLin1}}, we assume $ \widehat{\mbfs{\beta}} \in D^m$, so the previous equation proves that $ \|\sum_{\mbfs{\beta} \in I} \epsilon_{\mbfs{\beta}} \Gamma(\mbfs{\beta}) \| = 1$.
In the case of Property{\em~\ref{propLin2}}, we assume $ \|\sum_{\mbfs{\beta} \in I} \epsilon_{\mbfs{\beta}} \Gamma(\mbfs{\beta}) \|^2 = 1$, so the previous equation proves that $\widehat{\mbfs{\beta}} \in D^m$. 
Therefore, it remains to show that
$\Gamma(\widehat{\mbfs{\beta}} ) = \sum_{\mbfs{\beta} \in I}\epsilon_{\mbfs{\beta}}\Gamma(\mbfs{\beta})$ in both cases; we
prove these simultaneously.

Using the non-expansive property of $\Gamma$ and the nonnegativity of $\epsilon_{\mbfs{\beta}}$, we have
\[
1 = \widehat{\mbfs{\beta}}{}^\T \widehat{\mbfs{\beta}}  
= \sum_{\mbfs{\beta} \in I} \epsilon_{\mbfs{\beta}} \widehat{\mbfs{\beta}}{}^\T \mbfs{\beta}
\le  \sum_{\mbfs{\beta} \in I} \epsilon_{\mbfs{\beta}} \Gamma(\widehat{\mbfs{\beta}})^\T \Gamma(\mbfs{\beta})
= 
\Gamma(\widehat{\mbfs{\beta}})^\T \big(\sum_{\mbfs{\beta} \in I} \epsilon_{\mbfs{\beta}} \Gamma(\mbfs{\beta})\big) .
\]
The Cauchy-Schwarz inequality implies that $1 = \Gamma(\widehat{\mbfs{\beta}})^\T (\sum_{\mbfs{\beta} \in I} \epsilon_{\mbfs{\beta}} \Gamma(\mbfs{\beta}))$.
Since both vectors have unit norm, we conclude that $\Gamma(\widehat{\mbfs{\beta}} ) = \sum_{\mbfs{\beta} \in I}\epsilon_{\mbfs{\beta}}\Gamma(\mbfs{\beta})$.
\end{proof}

Our final lemma states that if an inequality $\Gamma(\overline{\mbfs{\beta}})^\T\mbfs{x} - \overline{\mbfs{\beta}}{}^\T \mbfs{y} \ge 0$ is implied by other inequalities of the same form indexed by $I \subseteq D^m$, then $\overline{\mbfs{\beta}}$ must be isometric with $\mbfs{\beta} \in I$. 
This will be used in the proof of Theorem~\ref{thmCpolyhedral} to help establish that we have a covering of $D^m$ by isometric points.
\begin{lemma}\label{lempolyhedraIsometricFacets}
Let $\Gamma:D^m \to D^n$ be non-expansive.
Let $\overline{\mbfs{\beta}} \in D^m$, let $I \subseteq D^m$ be a finite set, and let $\lambda_{\mbfs{\beta}} > 0$ for each $\mbfs{\beta} \in I$ be such that $(\Gamma(\overline{\mbfs{\beta}}),\overline{\mbfs{\beta}}) = \sum_{\mbfs{\beta} \in I} \lambda_{\mbfs{\beta}}(\Gamma(\mbfs{\beta}), \mbfs{\beta})$.
Then $\overline{\mbfs{\beta}}$ and $\mbfs{\beta}$ are isometric for each $\mbfs{\beta} \in I$.
\end{lemma}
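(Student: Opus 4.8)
The plan is to extract two scalar normalization identities from the single vector hypothesis and then squeeze them together using the non-expansiveness of $\Gamma$. First I would split the identity $(\Gamma(\overline{\mbfs{\beta}}),\overline{\mbfs{\beta}}) = \sum_{\mbfs{\beta} \in I} \lambda_{\mbfs{\beta}}(\Gamma(\mbfs{\beta}), \mbfs{\beta})$ into its two coordinate blocks,
\[
\overline{\mbfs{\beta}} = \sum_{\mbfs{\beta} \in I} \lambda_{\mbfs{\beta}}\, \mbfs{\beta} \qquad\text{and}\qquad \Gamma(\overline{\mbfs{\beta}}) = \sum_{\mbfs{\beta} \in I} \lambda_{\mbfs{\beta}}\, \Gamma(\mbfs{\beta}).
\]

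Next I would take the inner product of the first identity with $\overline{\mbfs{\beta}}$ and of the second with $\Gamma(\overline{\mbfs{\beta}})$. Since $\overline{\mbfs{\beta}} \in D^m$ and $\Gamma(\overline{\mbfs{\beta}}) \in D^n$ both have unit norm, this yields
\[
1 = \sum_{\mbfs{\beta} \in I} \lambda_{\mbfs{\beta}}\, \overline{\mbfs{\beta}}{}^\T \mbfs{\beta} \qquad\text{and}\qquad 1 = \sum_{\mbfs{\beta} \in I} \lambda_{\mbfs{\beta}}\, \Gamma(\overline{\mbfs{\beta}})^\T \Gamma(\mbfs{\beta}).
\]
Subtracting these gives $\sum_{\mbfs{\beta} \in I} \lambda_{\mbfs{\beta}} \big(\Gamma(\overline{\mbfs{\beta}})^\T \Gamma(\mbfs{\beta}) - \overline{\mbfs{\beta}}{}^\T \mbfs{\beta}\big) = 0$.

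The key ingredient is the reformulation of non-expansiveness recorded just after the definition of expansive/non-expansive points: for all $\overline{\mbfs{\beta}}, \mbfs{\beta} \in D^m$ one has $\overline{\mbfs{\beta}}{}^\T \mbfs{\beta} \le \Gamma(\overline{\mbfs{\beta}})^\T \Gamma(\mbfs{\beta})$, with equality precisely when the pair is isometric. Consequently every summand above is the product of the strictly positive weight $\lambda_{\mbfs{\beta}}$ with a nonnegative quantity, while the whole sum is zero; hence each summand vanishes. This forces $\Gamma(\overline{\mbfs{\beta}})^\T \Gamma(\mbfs{\beta}) = \overline{\mbfs{\beta}}{}^\T \mbfs{\beta}$ for every $\mbfs{\beta} \in I$, which is exactly the statement that $\overline{\mbfs{\beta}}$ and $\mbfs{\beta}$ are isometric.

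I do not expect any substantial obstacle: the argument is fundamentally a ``sum of nonnegative terms equal to zero'' observation. The only point meriting a moment of care is the degenerate case $\mbfs{\beta} = \overline{\mbfs{\beta}}$ for some index $\mbfs{\beta} \in I$, where the non-expansive inequality is not literally about a pair of distinct points; there, however, both $\overline{\mbfs{\beta}}{}^\T \mbfs{\beta}$ and $\Gamma(\overline{\mbfs{\beta}})^\T \Gamma(\mbfs{\beta})$ equal $1$, so the corresponding difference is $0$ and the nonnegativity used in the squeezing step is preserved.
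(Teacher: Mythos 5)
Your proposal is correct and is essentially the paper's own argument: both compute $0 = \sum_{\mbfs{\beta}\in I}\lambda_{\mbfs{\beta}}\bigl(\Gamma(\overline{\mbfs{\beta}})^\T\Gamma(\mbfs{\beta}) - \overline{\mbfs{\beta}}{}^\T\mbfs{\beta}\bigr)$ by pairing the two coordinate blocks of the hypothesis with $(\Gamma(\overline{\mbfs{\beta}}),\overline{\mbfs{\beta}})$ and using the unit norms, then conclude via nonnegativity of each summand under non-expansiveness. No gaps.
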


\begin{proof}
Notice that
\[
\begin{array}{rclclcl}
0 &=&\displaystyle\Gamma(\overline{\mbfs{\beta}})^\T \Gamma(\overline{\mbfs{\beta}}) - \overline{\mbfs{\beta}}{}^\T\overline{\mbfs{\beta}} &=& \displaystyle \left(\sum_{\mbfs{\beta}\in I} \lambda_{\mbfs{\beta}} \Gamma(\mbfs{\beta})\right){}^\T \Gamma(\overline{\mbfs{\beta}}) - \left(\sum_{\mbfs{\beta}\in I} \lambda_{\mbfs{\beta}} \mbfs{\beta} \right){}^\T \overline{\mbfs{\beta}}\\[.75 cm]
 && &= &  \displaystyle\sum_{\mbfs{\beta} \in I} \lambda_{\mbfs{\beta}} \left(\Gamma(\mbfs{\beta})^\T\Gamma(\overline{\mbfs{\beta}}) - {\mbfs{\beta}}{}^\T\overline{\mbfs{\beta}}\right),
\end{array}
\]
where the first equality follow from $\overline{\mbfs{\beta}} \in D^m$ and $\Gamma(\overline{\mbfs{\beta}}) \in D^n$.
Due to the non-expansiveness of $\Gamma$, every summand is non-negative.
Since the sum is 0, every summand must be 0.
As $\lambda_{\mbfs{\beta}} >0$, we conclude that $\Gamma(\mbfs{\beta})^\T\Gamma(\overline{\mbfs{\beta}}) = {\mbfs{\beta}}{}^\T\overline{\mbfs{\beta}}$ for all $\mbfs{\beta} \in I$.
\end{proof}


\section{A Proof of Theorem~\ref{thm:continuous}}\label{secMainThm}

For the sake of presentation, we divide the proof of Theorem~\ref{thm:continuous} into its sufficient and necessary conditions for maximality. 
The proof follows by combining Lemmata~\ref{lemNMax} and~\ref{lemSMax}.

\subsection{A sufficient condition for maximality}\label{secMainThmNecessary}

In this section, we prove the following result.
We do not explicitly assume continuity in Lemma~\ref{lemNMax} as we do in Theorem~\ref{thm:continuous} because it is implied by the non-expansivity of $\Gamma$.

\begin{lemma}\label{lemNMax}
Let $\Gamma: D^m \to D^n$ and define $C_\Gamma$ as in \eqref{Cgammadef}. 
If $\Gamma$ is non-expansive and $(\mbfs{0}, \mbfs{0}) \not\in \conv(\{(\Gamma(\mbfs{\beta}),-\mbfs{\beta})\ : \mbfs{\beta} \in D^m\})$, then $C_{\Gamma}$ is a full-dimensional maximal $Q$-free set.
\end{lemma}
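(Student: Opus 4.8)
The plan is to establish two things: first that $C_\Gamma$ is full-dimensional, and then that it is maximal $Q$-free. Full-dimensionality follows immediately from Lemma~\ref{lemNoZeroSameCoeff}, since non-expansivity implies continuity of $\Gamma$ and we are assuming $(\mbfs{0},\mbfs{0}) \not\in \conv(\{(\Gamma(\mbfs{\beta}),-\mbfs{\beta}) : \mbfs{\beta} \in D^m\})$. Since Remark~\ref{rmkQFree} already tells us $C_\Gamma$ is $Q$-free, the heart of the matter is maximality, and for this the natural tool is the newly established Theorem~\ref{thmMax}: it suffices to exhibit a description of $C_\Gamma$ by inequalities, each of which possesses an exposing sequence lying in $Q$.

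**For the description of $C_\Gamma$** I would invoke Lemma~\ref{lemPointed}, which gives $C_\Gamma$ as the set cut out by the inequalities $\Gamma(\mbfs{\beta})^\T\mbfs{x} - \mbfs{\beta}^\T\mbfs{y} \ge 0$ ranging over $(\Gamma(\mbfs{\beta}),-\mbfs{\beta}) \in I$, where $I$ is the set of exposed rays of the pointed closed cone $K = \cone(\{(\Gamma(\mbfs{\beta}),-\mbfs{\beta}) : \mbfs{\beta} \in D^m\})$. This reduces the task to producing, for each exposed ray $(\Gamma(\overline{\mbfs{\beta}}),-\overline{\mbfs{\beta}})$, an exposing sequence in $Q$ for the corresponding inequality $\overline{\mbfs{\alpha}}^\T(\mbfs{x},\mbfs{y}) \le 0$ with $\overline{\mbfs{\alpha}} = (-\Gamma(\overline{\mbfs{\beta}}),\overline{\mbfs{\beta}})$.

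**The natural candidate for the exposing sequence** is the single point $(\Gamma(\overline{\mbfs{\beta}}),\overline{\mbfs{\beta}})$ (or a constant sequence equal to it), which lies in $Q$ since $\|\Gamma(\overline{\mbfs{\beta}})\| = \|\overline{\mbfs{\beta}}\| = 1$, so $\|\mbfs{x}\| = \|\mbfs{y}\|$. This point makes the target inequality tight: $-\Gamma(\overline{\mbfs{\beta}})^\T\Gamma(\overline{\mbfs{\beta}}) + \overline{\mbfs{\beta}}^\T\overline{\mbfs{\beta}} = 0$. To verify the exposing-sequence condition of Definition~\ref{def:exposingseq}, I would take any sequence of valid inequalities $(\mbfs{\delta}^t,\delta_0^t)$ for $C_\Gamma$ with $\|\mbfs{\delta}^t\| = \|\overline{\mbfs{\alpha}}\|$ and $\mbfs{\delta}^t{}^\T(\Gamma(\overline{\mbfs{\beta}}),\overline{\mbfs{\beta}}) \ge \delta_0^t$, and show it must converge to $\overline{\mbfs{\alpha}}$. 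By Lemma~\ref{lemCCinequalities}, each such valid inequality lies in $K = C_\Gamma^\circ$; combined with validity at $C_\Gamma$ and the near-tightness at the point $(\Gamma(\overline{\mbfs{\beta}}),\overline{\mbfs{\beta}})$, this forces each $(\mbfs{\delta}^t,\delta_0^t)$ to lie in $K$ and to be nearly orthogonal to the exposed direction. Since $(\Gamma(\overline{\mbfs{\beta}}),-\overline{\mbfs{\beta}})$ generates an \emph{exposed} ray of $K$, any element of $K$ supported by the same hyperplane must be a nonnegative multiple of it, and the norm constraint then pins down the limit as $\overline{\mbfs{\alpha}}$.

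**The main obstacle** I anticipate is precisely the passage from ``exposed ray'' to the convergence claim: an exposed ray is defined via a single supporting hyperplane, whereas the exposing-sequence condition must control \emph{every} qualifying sequence of inequalities, including ones that are valid but not tight. The delicate point is to argue that the near-tightness condition $\mbfs{\delta}^t{}^\T(\Gamma(\overline{\mbfs{\beta}}),\overline{\mbfs{\beta}}) \ge \delta_0^t \ge 0$ (the last since $\mbfs{0}\in C_\Gamma$ forces $\delta_0^t \ge 0$, and here in fact $\delta_0^t = 0$ as the cone $K$ consists of homogeneous inequalities) squeezes the feasible coefficient vectors toward the exposed ray; compactness of the unit sphere intersected with $K$ together with the definition of exposedness should close this gap, but the homogeneity of all inequalities (every $(\mbfs{\delta}^t,0) \in K$) must be used carefully to rule out spurious limits. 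This is where I would spend the most care.
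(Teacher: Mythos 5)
Your setup (full-dimensionality via Lemma~\ref{lemNoZeroSameCoeff}, the exposed-ray description from Lemma~\ref{lemPointed}, and reduction to Theorem~\ref{thmMax}) matches the paper exactly, but your candidate exposing sequence is wrong, and the gap is not the technical one you flag at the end --- it is the choice of point itself. You propose the constant sequence at $(\Gamma(\overline{\mbfs{\beta}}), \overline{\mbfs{\beta}})$. This point does lie in $Q \cap C_\Gamma$ and makes the target inequality tight, but whenever there exists $\mbfs{\beta} \neq \overline{\mbfs{\beta}}$ \emph{isometric} to $\overline{\mbfs{\beta}}$ (which is allowed under mere non-expansivity and happens in every polyhedral example), the inequality $\Gamma(\mbfs{\beta})^\T \mbfs{x} - \mbfs{\beta}^\T\mbfs{y} \ge 0$ is \emph{also} tight at $(\Gamma(\overline{\mbfs{\beta}}), \overline{\mbfs{\beta}})$, since $\Gamma(\mbfs{\beta})^\T\Gamma(\overline{\mbfs{\beta}}) - \mbfs{\beta}^\T\overline{\mbfs{\beta}} = 0$. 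The constant sequence of valid inequalities given by this $\mbfs{\beta}$ then satisfies every requirement of Definition~\ref{def:exposingseq} yet does not converge to the target coefficients, so your sequence is not exposing. Concretely, in Example~\ref{ex2} with $\Gamma(\mbfs{\beta}) = |\mbfs{\beta}|$ and $\overline{\mbfs{\beta}} = \mbfs{e}^1$, the point $(\mbfs{e}^1, \mbfs{e}^1)$ also saturates the inequality indexed by $-\mbfs{e}^2$; the paper explicitly notes there that \emph{no} facet has an exposing point, so no constant sequence can work. Your appeal to exposedness of the ray does not rescue this: exposedness guarantees that \emph{some} hyperplane touches $K$ only along $(\Gamma(\overline{\mbfs{\beta}}), -\overline{\mbfs{\beta}})$, but the normal of that hyperplane is some $(\overline{\mbfs{x}}, \overline{\mbfs{y}})$ which in general is not $(\Gamma(\overline{\mbfs{\beta}}), \overline{\mbfs{\beta}})$, and the hyperplane with normal $(\Gamma(\overline{\mbfs{\beta}}), \overline{\mbfs{\beta}})$ may support $K$ along several rays.

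The paper's proof starts from the correct point, namely the exposing functional $(\overline{\mbfs{x}}, \overline{\mbfs{y}})$ of the exposed ray, for which strict inequality holds in all non-target constraints. If $(\overline{\mbfs{x}}, \overline{\mbfs{y}}) \in Q$, the constant sequence does work (the paper's Case 1, which is essentially your argument applied to the right point). The substantial work is Case 2, where $(\overline{\mbfs{x}}, \overline{\mbfs{y}}) \notin Q$: one must build a genuinely non-constant sequence inside $Q$, of the form $\mbfs{x}^t = \Gamma(\overline{\mbfs{\beta}}) + \tfrac{\sqrt{2t+1}}{t}\overline{\mbfs{x}}$, $\mbfs{y}^t = (1 + \tfrac{4}{t}\Delta)\overline{\mbfs{\beta}} + \tfrac{\sqrt{2t+1}}{t}\overline{\mbfs{y}}$, verify membership in $Q$ for large $t$, and then run a rate-of-convergence argument (uniform boundedness of the multipliers, and comparison of the $t$ and $\sqrt{2t+1}$ terms) to show every qualifying sequence of separating inequalities collapses onto $(\Gamma(\overline{\mbfs{\beta}}), \overline{\mbfs{\beta}})$. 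None of this appears in your proposal, so the core of the proof is missing.
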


\begin{proof}
The function $\Gamma$ is continuous because it is non-expansive. 
Therefore, Lemma~\ref{lemNoZeroSameCoeff} implies that $C_\Gamma$ is full-dimensional.
For the remainder of the proof, we focus on establishing that $C_{\Gamma}$ is maximal $Q$-free.

According to Lemma~\ref{lemPointed}, we can write 
\[
C_{\Gamma} = \left\{(\mbfs{x}, \mbfs{y}) \in \mbb{R}^n\times \mbb{R}^m:\ \Gamma(\mbfs{\beta})^\top \mbfs{x} -  \mbfs{\beta}{}^\top \mbfs{y}\ge 0 \quad\forall ~ (\Gamma(\mbfs{\beta}),-\mbfs{\beta}) \in I\right\}.
\]
where $I$ is the set of exposed rays of $\cone(\conv(\left\{(\Gamma(\mbfs{\beta}),-\mbfs{\beta})\,: \mbfs{\beta} \in D^m\,\right\}))$.

To prove that $C_{\Gamma}$ is maximal, it suffices to prove that the inequality $\Gamma(\mbfs{\beta})^\top\mbfs{x} - \mbfs{\beta}{}^\top\mbfs{y} \ge 0$ for each $(\Gamma(\mbfs{\beta}), -\mbfs{\beta}) \in I$ has an exposing sequence.
To this end, fix $(\Gamma(\overline{\mbfs{\beta}}), -\overline{\mbfs{\beta}}) \in I$.
Given that $(\Gamma(\overline{\mbfs{\beta}}), -\overline{\mbfs{\beta}})$ is an exposed ray of $K$, there exists a vector $(\overline{\mbfs{x}}, \overline{\mbfs{y}}) \in \mbb{R}^n \times \mbb{R}^m$ such that
\begin{equation}\label{expI}
\Gamma(\overline{\mbfs{\beta}})^\top\overline{\mbfs{x}} - \overline{\mbfs{\beta}}{}{}^\top\overline{\mbfs{y}} = 0 < \Gamma({\mbfs{\beta}})^\top\overline{\mbfs{x}} - {\mbfs{\beta}}{}{}^\top\overline{\mbfs{y}} \qquad \forall \ \mbfs{\beta} \in D^m \setminus \{\overline{\mbfs{\beta}}\}.
\end{equation}
We consider two cases; Case 2 is the more involved case that constitutes most of the remainder of the proof.

\medskip

\noindent {\bf Case 1.} Assume that $(\overline{\mbfs{x}}, \overline{\mbfs{y}}) \in Q$.
Consider the \emph{constant} sequence $((\overline{\mbfs{x}}, \overline{\mbfs{y}}) )_{t=1}^{\infty}$ of points in $Q$.\footnote{Using the language in~\cite{MS2022}, the constant sequence here is an exposing point.}
Consider any sequence $((\mbfs{g}^t, \mbfs{b}^t))_{t=1}^{\infty}$ of points in $\mbb{R}^n \times \mbb{R}^m$ with $\|(\mbfs{g}^t, \mbfs{b}^t)\| = \|(\Gamma(\overline{\mbfs{\beta}}),\overline{\mbfs{\beta}})\|$ such that the inequality $\mbfs{g}^t{}^\top \mbfs{x} - \mbfs{b}^t{}^\top \mbfs{y} \ge 0$ is valid for $C_{\Gamma}$ and separates $(\overline{\mbfs{x}}, \overline{\mbfs{y}})$, that is, $\mbfs{g}^t{}^\top \overline{\mbfs{x}} - \mbfs{b}^t{}^\top \overline{\mbfs{y}} \le 0$.
According to Lemma~\ref{lemCCinequalities}, the point $(\mbfs{g}^t, \mbfs{b}^t)$ is a conic combination of points in $\{(\Gamma(\mbfs{\beta}),\mbfs{\beta})\ : \mbfs{\beta} \in D^m\}$, say $(\mbfs{g}^t, \mbfs{b}^t) = \sum_{i=1}^k \lambda_i (\Gamma(\mbfs{\beta}^i),\mbfs{\beta}^i)$, where $\lambda_1, \dotsc, \lambda_k > 0$.
Hence, using~\eqref{expI}, we have $\mbfs{g}^t{}^\top \mbfs{x} - \mbfs{b}^t{}^\top \mbfs{y} \ge 0$ with equality if and only if $(\Gamma(\mbfs{\beta}^i),\mbfs{\beta}^i) = (\Gamma(\overline{\mbfs{\beta}}),\overline{\mbfs{\beta}})$ for each $i \in \{1, \dotsc, k\}$.
This shows that $(\mbfs{g}^t, \mbfs{b}^t)  = (\Gamma(\overline{\mbfs{\beta}}),\overline{\mbfs{\beta}})$.
In particular, $\lim_{t\to\infty}(\mbfs{g}^t, \mbfs{b}^t)  = (\Gamma(\overline{\mbfs{\beta}}),\overline{\mbfs{\beta}})$, so $((\overline{\mbfs{x}}, \overline{\mbfs{y}}) )_{t=1}^{\infty}$ is an exposing sequence for $\Gamma(\overline{\mbfs{\beta}})^\top\mbfs{x} - \overline{\mbfs{\beta}}{}^\top\mbfs{y} \ge 0$.

\noindent {\bf Case 2.} Assume that $(\overline{\mbfs{x}}, \overline{\mbfs{y}}) \not \in Q$.
That is, we may assume that 
\begin{equation}\label{eqNotExpansive}
\Delta := \|\overline{\mbfs{x}}\|^2 - \|\overline{\mbfs{y}}\|^2 > 0.
\end{equation}
The proposed exposing sequence for $\Gamma(\overline{\mbfs{\beta}})^\top{\mbfs{x}} - \overline{\mbfs{\beta}}{}{}^\top{\mbfs{y}} \ge 0$ is $((\mbfs{x}^t, \mbfs{y}^t))_{t=1}^{\infty}$, where
\[
\begin{array}{rcl}
\mbfs{x}^t & := & \displaystyle\Gamma(\overline{\mbfs{\beta}}) + \frac{\sqrt{2t+1}}{t} \cdot \overline{\mbfs{x}}\\[.25 cm]
\mbfs{y}^t & := &\displaystyle \left(1+ \frac{4}{t}\Delta \right) \overline{\mbfs{\beta}} + \frac{\sqrt{2t+1}}{t} \cdot \overline{\mbfs{y}}.
\end{array}
\]
We will start the sequence at large enough $t$. 
More precisely, let $\overline{t}$ be large enough so that
\[
6t-8\sqrt{2t+1}\cdot\|\overline{\mbfs{y}}\|-1 \ge 0 \quad \forall ~t \ge \overline{t}.
\]
Our choice to restrict the sequence to $t \ge \overline{t}$ is apparent from the next claim, which shows that the candidate sequence $((\mbfs{x}^t, \mbfs{y}^t))_{t=\overline{t}}^{\infty}$ is indeed a sequence of points in $Q$.

\begin{Claim}
For each $t \ge \overline{t}$, we have $(\mbfs{x}^t, \mbfs{y}^t) \in Q$.
\end{Claim}

\begin{cpf}
Note that
\[
\|\mbfs{x}^t\|^2 = 1+ \frac{2\sqrt{2t+1}}{t} \cdot \Gamma(\overline{\mbfs{\beta}}){}{}^\top\overline{\mbfs{x}}+\frac{2t+1}{t^2} \cdot \overline{\mbfs{x}}{}^\top \overline{\mbfs{x}}.
\]
Using the equation $\Gamma(\overline{\mbfs{\beta}})^\top\overline{\mbfs{x}} - \overline{\mbfs{\beta}}{}{}^\top\overline{\mbfs{y}} = 0$ in~\eqref{expI}, we see that
\[
\|\mbfs{x}^t\|^2 = 1+ \frac{2\sqrt{2t+1}}{t} \cdot \overline{\mbfs{\beta}}{}{}^\top\overline{\mbfs{y}}+\frac{2t+1}{t^2} \cdot \overline{\mbfs{x}}{}^\top \overline{\mbfs{x}}.
\]
Furthermore, we have
\[
\|\mbfs{y}^t\|^2 = \left(1+ \frac{4}{t}\Delta \right)^2 + 2 \left(1+ \frac{4}{t}\Delta \right) \frac{\sqrt{2t+1}}{t} \cdot \overline{\mbfs{\beta}}{}{}^\top\overline{\mbfs{y}} + \frac{2t+1}{t^2} \cdot \overline{\mbfs{y}}{}^\top \overline{\mbfs{y}}.
\]

We would like to show $(\mbfs{x}^t, \mbfs{y}^t) \in Q$ when $t \ge \overline{t}$;
this is equivalent to $\|\mbfs{y}^t\|^2 - \|\mbfs{x}^t\|^2 \ge 0$.
Using the previous two displayed equations, we have
\[
\|\mbfs{y}^t\|^2 - \|\mbfs{x}^t\|^2 = \frac{8}{t}\Delta +\frac{16}{t^2}\Delta^2 + \Delta \frac{8\sqrt{2t+1}}{t^2} \overline{\mbfs{\beta}}{}{}^\top\overline{\mbfs{y}} - \frac{2t+1}{t^2} \Delta.
\]
To show $\|\mbfs{y}^t\|^2 - \|\mbfs{x}^t\|^2 \ge 0$, we can multiply both sides in the previous equation by $\frac{t^2}{\Delta} > 0$ and prove the resulting right-hand side is nonnegative.
That is, it suffices to prove that the quantity
\[
8t+16\Delta
+ 8\sqrt{2t+1} \cdot \overline{\mbfs{\beta}}{}{}^\top\overline{\mbfs{y}}
- (2t+1)
\]
is nonnegative.
By Cauchy-Schwarz, we have $|\overline{\mbfs{\beta}}{}{}^\top\overline{\mbfs{y}}| \le \|\overline{\mbfs{y}}\|\cdot\|\overline{\mbfs{\beta}}\| = \|\overline{\mbfs{y}}\|$.
Hence,
\[
8t+\underbrace{16\Delta}_{\ge 0~\text{by}~\eqref{eqNotExpansive}}
+ 8\sqrt{2t+1} \cdot \underbrace{\overline{\mbfs{\beta}}{}{}^\top\overline{\mbfs{y}}}_{\ge - \|\overline{\mbfs{y}}\|}
- (2t+1) \ge 6t-8\sqrt{2t+1}\cdot\|\overline{\mbfs{y}}\|-1.
\]
The assumption $t \ge \overline{t}$ means that the right-hand side of the previous inequality is nonnegative.
\end{cpf}

Now, consider a sequence of inequalities that are valid for $C_{\Gamma}$ and that separate $((\mbfs{x}^t, \mbfs{y}^t))_{t=\overline{t}}^{\infty}$.
Using Lemma~\ref{lemCCinequalities}, we may assume that the inequality that separates $(\mbfs{x}^t, \mbfs{y}^t)$ has the form
\begin{equation}\label{eqForm1}
\sum_{i=1}^{n+m} \lambda_{[i,t]}\left(\Gamma(\mbfs{\beta}^{[i,t]})^\top\mbfs{x}^t - \mbfs{\beta}^{[i,t]}{}{}^\top \mbfs{y}^t\right) \le 0,
\end{equation}
where each $\lambda_{[i,t]} \ge 0$ and $\mbfs{\beta}^{[i,t]} \in D^m$\footnote{Note that we cannot assume $(\Gamma(\mbfs{\beta}^{[i,t]}),\mbfs{\beta}^{[i,t]})\in I$ because Lemma~\ref{lemCCinequalities} does not hold if $\left\{(\Gamma(\mbfs{\beta}),-\mbfs{\beta})\,: \mbfs{\beta} \in D^m\,\right\}$ is replaced by $I$, which is not closed.  
We will not require that these points be in $I$, though.
Here, we are implicitly using two representations of $C_{\Gamma}$, one obtained by intersecting inequalities whose coefficients belong to $I$ and another obtained by intersecting inequalities whose coefficients belong to $\left\{(\Gamma(\mbfs{\beta}),-\mbfs{\beta})\,: \mbfs{\beta} \in D^m\,\right\}$. }.
Furthermore, in order to demonstrate that $((\mbfs{x}^t, \mbfs{y}^t))_{t=\overline{t}}^\infty$ is an exposing sequence, we assume that all of these separating inequalities have the same norm as $(\Gamma(\overline{\mbfs{\beta}}), \overline{\mbfs{\beta}})$, that is
\begin{equation}\label{eqSameNorm}
\left\|\sum_{i=1}^{n+m} \lambda_{[i,t]}\left(\Gamma(\mbfs{\beta}^{[i,t]}), \mbfs{\beta}^{[i,t]}\right)\right\| = \left\|(\Gamma(\overline{\mbfs{\beta}}), \overline{\mbfs{\beta}})\right\| = \sqrt{2} \qquad \forall ~ t.
\end{equation}

\begin{Claim}\label{claimBoundTau}
    There exists $\tau \in \mbb{R}_+$ such that $\lambda_{[i,t]} \le \tau$ for each $i \in \{1, \dotsc, n+m\}$ and $t \ge \overline{t}$.
\end{Claim}
\begin{cpf}
Assume to the contrary that no $\tau$ exists. 
Then, there exists some $i$, say $i = 1$, and a subsequence of $(\lambda_{[1,t]})_{t=\overline{t}}^{\infty}$ that diverges to $\infty$.
By re-indexing the sequence, we assume that $(\lambda_{[1,t]})_{t=\overline{t}}^{\infty}$ is increasing and diverges to $\infty$.

According to~\eqref{eqSameNorm}, the sequence $(\sum_{i=1}^{n+m} \lambda_{[i,t]}(\Gamma(\mbfs{\beta}^{[i,t]}), \mbfs{\beta}^{[i,t]}))_{t=\overline{t}}^{\infty}$ has a convergent subsequence. 
After re-indexing along this subsequence, we may assume that the limit exists, say 
\[
\lim_{t\to\infty} \sum_{i=1}^{n+m} \lambda_{[i,t]}\left(\Gamma(\mbfs{\beta}^{[i,t]}), \mbfs{\beta}^{[i,t]}\right) =: (\mbfs{g}, \mbfs{b}),
\]
where $\|(\mbfs{g}, \mbfs{b}) \| = \sqrt{2}$.
Furthermore, after taking additional subsequences, we may assume that the limit of $((\Gamma(\mbfs{\beta}^{[1,t]}), \mbfs{\beta}^{[1,t]}))_{t=\overline{t}}^\infty$ exists, say $(\Gamma(\mbfs{\beta}^1), \mbfs{\beta}^1) =: \lim_{t\to\infty} (\Gamma(\mbfs{\beta}^{[1,t]}, \mbfs{\beta}^{[1,t]})$.

We have
\begin{align*}
\mbfs{0} = \lim_{t\to\infty} \frac{1}{\lambda_{[1,t]}} (\mbfs{g}, \mbfs{b}) &= \lim_{t\to\infty} \sum_{i=1}^{n+m} \frac{\lambda_{[i,t]}}{\lambda_{[1,t]}}\left(\Gamma(\mbfs{\beta}^{[i,t]}), \mbfs{\beta}^{[i,t]}\right)\\& = (\Gamma(\mbfs{\beta}^1), \mbfs{\beta}^1)+\lim_{t\to\infty} \sum_{i=2}^{n+m} \frac{\lambda_{[i,t]}}{\lambda_{[1,t]}}\left(\Gamma(\mbfs{\beta}^{[i,t]}), \mbfs{\beta}^{[i,t]}\right).
\end{align*}
Thus, $-(\Gamma(\mbfs{\beta}^1), \mbfs{\beta}^1)=\lim_{t\to\infty} \sum_{i=2}^{n+m} \frac{\lambda_{[i,t]}}{\lambda_{[1,t]}}\left(\Gamma(\mbfs{\beta}^{[i,t]}), \mbfs{\beta}^{[i,t]}\right)$.
Hence, the inequality $-(\Gamma(\mbfs{\beta}^{[1,t]}){}{}^\top \mbfs{x} - \mbfs{\beta}^{[1,t]}{}{}^\top\mbfs{y}) \le 0$ is a limit of valid inequalities for $C_{\Gamma}$, and is therefore itself valid for $C_{\Gamma}$.
However, $\Gamma(\mbfs{\beta}^{[1,t]}){}{}^\top \mbfs{x} - \mbfs{\beta}^{[1,t]}{}{}^\top\mbfs{y} \le 0$ is also valid for $C_{\Gamma}$, implying that $C_{\Gamma}$ is not full-dimensional. 
This is a contradiction.
\end{cpf}

To show that $((\mbfs{x}^t, \mbfs{y}^t))_{t=\overline{t}}^\infty$ is an exposing sequence, we need to show that
\[
\lim_{t \to \infty} \sum_{i=1}^{n+m} \lambda_{[i,t]}\left(\Gamma(\mbfs{\beta}^{[i,t]}), \mbfs{\beta}^{[i,t]}\right) = \left(\Gamma(\overline{\mbfs{\beta}}), \overline{\mbfs{\beta}}\right).
\]
Since the sequence is bounded (see \eqref{eqSameNorm}), it is enough to show that every convergent subsequence converges to $\left(\Gamma(\overline{\mbfs{\beta}}), \overline{\mbfs{\beta}}\right)$.
Abusing notation, let $\left(\sum_{i=1}^{n+m} \lambda_{[i,t]}\left(\Gamma(\mbfs{\beta}^{[i,t]}), \mbfs{\beta}^{[i,t]}\right)\right)_{t=\overline{t}}^\infty$ be our convergent subsequence.
After taking more subsequences, we may assume that for each $i \in \{1, \dotsc, n+m\}$ we have
\[
\lim_{t \to\infty} \left(\Gamma(\mbfs{\beta}^{[i,t]}), \mbfs{\beta}^{[i,t]}\right) =: \left(\Gamma(\mbfs{\beta}^{i}), \mbfs{\beta}^{i}\right),
\]
and furthermore because $\lambda_{[i,t]}$ is uniformly bounded from Claim~\ref{claimBoundTau}, we can also assume
\[
\lim_{t \to\infty} \lambda_{[i,t]} =: \lambda_i.
\]

To prove the limit we are going to exploit the fact that the convergent subsequence satisfies~\eqref{eqForm1}.
Consider multiplying~\eqref{eqForm1} by $t$.
Notice that for each $\mbfs{\beta} \in D^m$ and $t \ge \overline{t}$, we have
\begin{align*}
&t \left(\Gamma(\mbfs{\beta}){}{}^\top \mbfs{x}^t - \mbfs{\beta}{}^\top \mbfs{y}^t\right)\\ =& 
t \left(\Gamma(\mbfs{\beta}){}{}^\top\Gamma(\overline{\mbfs{\beta}}) - \mbfs{\beta}{}^\top\overline{\mbfs{\beta}}\right)
+ \sqrt{2t+1} \left(\Gamma(\mbfs{\beta}){}{}^\top\overline{\mbfs{x}} - \mbfs{\beta}{}^\top\overline{\mbfs{y}}\right)
- 4\Delta\mbfs{\beta}{}^\top \overline{\mbfs{\beta}}.
\end{align*}

Then, multiplying~\eqref{eqForm1} by $t$ can be written as
\begin{equation}\label{eqForm2}
\begin{array}{rccl}
0 &\ge&  t & \displaystyle \left[ \sum_{i=1}^{n+m} \lambda_{[i,t]}\left((\Gamma(\mbfs{\beta}^{[i,t]}){}{}^\top\Gamma(\overline{\mbfs{\beta}}) - \mbfs{\beta}^{[i,t]}{}^\top \overline{\mbfs{\beta}}
\right)\right]\\[.75 cm]
& + &   \sqrt{2t+1} & \displaystyle\left[ \sum_{i=1}^{n+m} \lambda_{[i,t]}\left((\Gamma(\mbfs{\beta}^{[i,t]}){}{}^\top\overline{\mbfs{x}} - \mbfs{\beta}^{[i,t]}{}^\top \overline{\mbfs{y}}
\right)\right]\\[.75 cm]
& - &  & \displaystyle\left[ \sum_{i=1}^{n+m} \lambda_{[i,t]}\left(4\Delta\mbfs{\beta}^{[i,t]}{}^\top \overline{\mbfs{\beta}}\right)\right].
\end{array}
\end{equation}

Recall $\lambda_{[i,t]}$ is uniformly bounded by $\tau$ from Claim~\ref{claimBoundTau}.
Furthermore, because $\Gamma$ is non-expansive, $(\Gamma(\mbfs{\beta}^{[i,t]}){}{}^\top\Gamma(\overline{\mbfs{\beta}}) - \mbfs{\beta}^{[i,t]}{}^\top \overline{\mbfs{\beta}} \geq 0$.
Hence, we can lower bound~\eqref{eqForm2} by
\begin{equation}\label{eqForm3}
0 \ge \sqrt{2t+1}  \left[ \sum_{i=1}^{n+m} \lambda_{[i,t]}\left((\Gamma(\mbfs{\beta}^{[i,t]}){}{}^\top\overline{\mbfs{x}} - \mbfs{\beta}^{[i,t]}{}^\top \overline{\mbfs{y}} \right)\right]
 - \tau \cdot 4\Delta.
\end{equation}

Now, suppose there exists some $i \in \{1, \dotsc, n+m\}$ for which $\mbfs{\beta}^i$ is not equal to $\overline{\mbfs{\beta}}$ and $\lambda_i > 0$.
According to~\eqref{expI}, we then have $\Gamma(\mbfs{\beta}^i){}{}^\top\overline{\mbfs{x}} - \mbfs{\beta}^i{}{}^\top \overline{\mbfs{y}} > 0$.
Under this assumption, the right-hand side of~\eqref{eqForm3} is lower bounded by $\sqrt{2t+1} \cdot \lambda_{[i,t]} \cdot (\Gamma(\mbfs{\beta}^{[i,t]}){}{}^\top\overline{\mbfs{x}} - \mbfs{\beta}^{[i,t]}{}{}^\top \overline{\mbfs{y}})- \tau \cdot 4\Delta$, which tends to $\infty$ as $t \to\infty$.
However, this contradicts the inequality in~\eqref{eqForm3}.
So, if $\mbfs{\beta}^i$ is not equal to $\overline{\mbfs{\beta}}$, then $\lambda_i = 0$.

In conclusion, if $\lambda_i > 0$, then $\mbfs{\beta}^i = \overline{\mbfs{\beta}}$.
Hence, 
\[
\lim_{t \to \infty} \sum_{i=1}^{n+m} \lambda_{[i,t]}\left(\Gamma(\mbfs{\beta}^{[i,t]}), \mbfs{\beta}^{[i,t]}\right) = \left(\Gamma(\overline{\mbfs{\beta}}), \overline{\mbfs{\beta}}\right),
\]
which demonstrates that $((\mbfs{x}^t, \mbfs{y}^t))_{t=\overline{t}}^\infty$ is an exposing sequence for $\Gamma(\overline{\mbfs{\beta}})^\top{\mbfs{x}} - \overline{\mbfs{\beta}}{}{}^\top{\mbfs{y}} \ge 0$.
\end{proof}


\subsection{A necessary condition for maximality}

In this section, we prove the following result.

\begin{lemma}\label{lemSMax}
Let $\Gamma: D^m \to D^n$ and define $C_\Gamma$ as in \eqref{Cgammadef}. 
If $C_{\Gamma}$ is a full-dimensional maximal $Q$-free set, then $\Gamma$ is non-expansive and $(\mbfs{0}, \mbfs{0}) \not\in \conv(\{(\Gamma(\mbfs{\beta}),-\mbfs{\beta})\ : \mbfs{\beta} \in D^m\})$.
\end{lemma}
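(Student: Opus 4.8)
The plan is to establish the two conclusions separately, with essentially all of the work going into non-expansivity; the condition $(\mbfs{0},\mbfs{0})\notin\conv(\{(\Gamma(\mbfs{\beta}),-\mbfs{\beta}):\mbfs{\beta}\in D^m\})$ will then follow quickly.

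First I would record the elementary containment $C_\Gamma\subseteq\{(\mbfs{x},\mbfs{y}):\|\mbfs{x}\|\ge\|\mbfs{y}\|\}$. Indeed, given $(\mbfs{x},\mbfs{y})\in C_\Gamma$ with $\mbfs{y}\neq\mbfs{0}$, applying the defining inequality at $\mbfs{\beta}=\mbfs{y}/\|\mbfs{y}\|$ and Cauchy--Schwarz yields $\|\mbfs{y}\|=\mbfs{\beta}^\T\mbfs{y}\le\Gamma(\mbfs{\beta})^\T\mbfs{x}\le\|\mbfs{x}\|$. Since the interior operator is monotone under inclusion and $\inte(\{\|\mbfs{x}\|\ge\|\mbfs{y}\|\})=\{\|\mbfs{x}\|>\|\mbfs{y}\|\}$, \emph{any} set contained in $\{\|\mbfs{x}\|\ge\|\mbfs{y}\|\}$ is automatically $Q$-free. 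This reduces the task of proving $Q$-freeness of an enlargement of $C_\Gamma$ to verifying the single scalar inequality $\|\mbfs{x}\|\ge\|\mbfs{y}\|$ pointwise.

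For non-expansivity I would argue by contradiction. Suppose some distinct pair $\mbfs{\beta}_1,\mbfs{\beta}_2\in D^m$ is expansive, i.e.\ $\Gamma(\mbfs{\beta}_1)^\T\Gamma(\mbfs{\beta}_2)<\mbfs{\beta}_1^\T\mbfs{\beta}_2$. Then $\mbfs{q}:=(\Gamma(\mbfs{\beta}_2),\mbfs{\beta}_2)$ violates the inequality indexed by $\mbfs{\beta}_1$, so $\mbfs{q}\notin C_\Gamma$; moreover $\|\Gamma(\mbfs{\beta}_2)\|=\|\mbfs{\beta}_2\|=1$, so $\mbfs{q}$ lies on $\{\|\mbfs{x}\|=\|\mbfs{y}\|\}$. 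The key computation is that $\conv(C_\Gamma\cup\{\mbfs{q}\})$ still lies in $\{\|\mbfs{x}\|\ge\|\mbfs{y}\|\}$: writing a generic point as $(1-\lambda)(\mbfs{x}_c,\mbfs{y}_c)+\lambda\mbfs{q}$ with $(\mbfs{x}_c,\mbfs{y}_c)\in C_\Gamma$ and $\lambda\in[0,1]$, a direct expansion gives
\[
\|(1-\lambda)\mbfs{x}_c+\lambda\Gamma(\mbfs{\beta}_2)\|^2-\|(1-\lambda)\mbfs{y}_c+\lambda\mbfs{\beta}_2\|^2=(1-\lambda)^2\big(\|\mbfs{x}_c\|^2-\|\mbfs{y}_c\|^2\big)+2\lambda(1-\lambda)\big(\Gamma(\mbfs{\beta}_2)^\T\mbfs{x}_c-\mbfs{\beta}_2^\T\mbfs{y}_c\big),
\]
where the $\lambda^2$ terms cancel because $\Gamma(\mbfs{\beta}_2)$ and $\mbfs{\beta}_2$ are unit vectors. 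Both summands on the right are nonnegative: the first by the containment of the previous paragraph, the second because $(\mbfs{x}_c,\mbfs{y}_c)$ satisfies the $\mbfs{\beta}_2$ inequality. Hence $\clconv(C_\Gamma\cup\{\mbfs{q}\})$ is a closed convex $Q$-free set strictly containing $C_\Gamma$ (strict since $\mbfs{q}\notin C_\Gamma$), contradicting maximality. Therefore $\Gamma$ is non-expansive.

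Finally, non-expansivity makes $\Gamma$ $1$-Lipschitz and hence continuous, so I can invoke the forward direction of Lemma~\ref{lemNoZeroSameCoeff}: since $C_\Gamma$ is full-dimensional by hypothesis, it follows that $(\mbfs{0},\mbfs{0})\notin\conv(\{(\Gamma(\mbfs{\beta}),-\mbfs{\beta}):\mbfs{\beta}\in D^m\})$. The main obstacle is the non-expansivity step, and within it the genuinely delicate point is the passage from ``$\conv(C_\Gamma\cup\{\mbfs{q}\})\subseteq\{\|\mbfs{x}\|\ge\|\mbfs{y}\|\}$'' to ``$Q$-free'': this is precisely where interior-monotonicity is needed, since the enlargement may meet the boundary $\{\|\mbfs{x}\|=\|\mbfs{y}\|\}$ of $Q$ but never its interior.
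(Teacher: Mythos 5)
Your proof is correct and follows essentially the same route as the paper's: argue non-expansivity by contradiction, enlarge $C_\Gamma$ by the point $(\Gamma(\mbfs{\beta}),\mbfs{\beta})$ associated with one member of the expansive pair, verify $Q$-freeness of the enlargement by a direct quadratic expansion in which the cross term is controlled by a defining inequality of $C_\Gamma$, and then deduce the convex-hull condition from Lemma~\ref{lemNoZeroSameCoeff} via the continuity implied by non-expansivity. The only cosmetic differences are that the paper forms the Minkowski sum $C_\Gamma+\cone((\Gamma(\mbfs{\beta}^1),\mbfs{\beta}^1))$ and checks the strict inequality $\|\widetilde{\mbfs{x}}\|>\|\widetilde{\mbfs{y}}\|$ on interior points, whereas you form $\conv(C_\Gamma\cup\{\mbfs{q}\})$ with $\mbfs{q}=(\Gamma(\mbfs{\beta}_2),\mbfs{\beta}_2)$ and check the weak inequality everywhere, passing to $Q$-freeness by monotonicity of the interior.
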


\proof
Let us first prove that $\Gamma$ is non-expansive. 
By contradiction, assume $\mbfs{\beta}^1,\mbfs{\beta}^2 \in D^m$ are such that
\[\|\Gamma(\mbfs{\beta}^1) - \Gamma(\mbfs{\beta}^2)\| > \|\mbfs{\beta}^1 - \mbfs{\beta}^2\| \Leftrightarrow \mbfs{\beta}^1{}^\top \mbfs{\beta}^2 > \Gamma(\mbfs{\beta}^1)^\top \Gamma(\mbfs{\beta}^2)  \]
This implies that $(\Gamma(\mbfs{\beta}^1), \mbfs{\beta}^1)\not\in C_\Gamma$. 
Define the convex cone
\[
C' := C_\Gamma + \cone((\Gamma(\mbfs{\beta}^1), \mbfs{\beta}^1)),
\]
 where the addition is the Minkowski sum.
 We claim $C'$ is $Q$-free and strictly contains $C_\Gamma$.

The fact that $C_\Gamma \subsetneq C'$ follows because $(\Gamma(\mbfs{\beta}^1), \mbfs{\beta}^1) \in C'\setminus C_\Gamma$.
To prove that $C'$ is $Q$-free, we show that if $(\widetilde{\mbfs{x}},\widetilde{\mbfs{y}})\in \intr(C')$, then $(\widetilde{\mbfs{x}},\widetilde{\mbfs{y}})\not \in Q$.
Given $(\widetilde{\mbfs{x}},\widetilde{\mbfs{y}})\in \intr(C')$, we can write
\[(\widetilde{\mbfs{x}},\widetilde{\mbfs{y}}) = (\mbfs{x},\mbfs{y}) + \lambda (\Gamma(\mbfs{\beta}^1), \mbfs{\beta}^1)\]
for some $(\mbfs{x},\mbfs{y}) \in \intr(C_\Gamma)$ (as $C_\Gamma$ is full-dimensional) and $\lambda > 0$ (see~\cite[Corollary 6.6.2]{R1970}).
From this,
\begin{align*}
\|\widetilde{\mbfs{x}}\|^2  = \|\mbfs{x}\|^2 + \lambda^2 + 2\lambda \Gamma(\mbfs{\beta}^1)^\top \mbfs{x} 
\geq & \|\mbfs{x}\|^2 + \lambda^2 + 2\lambda \mbfs{\beta}^1{}^\top \mbfs{y} \\
 >& \|\mbfs{y}\|^2 + \lambda^2 + 2\lambda \mbfs{\beta}^1{}^\top \mbfs{y} \\
 = &\|\widetilde{\mbfs{y}}\|^2,
\end{align*}
where the first inequality holds because $(\mbfs{x},\mbfs{y}) \in C_\Gamma $ and the second inequality holds because $C_\Gamma$ is $Q$-free.
Therefore, $(\widetilde{\mbfs{x}},\widetilde{\mbfs{y}})\not \in Q$ and $C'$ is $Q$-free. 
However, this contradicts the maximality of $C_\Gamma$.

Now that we have established that $\Gamma$ is non-expansive, we see that it is continuous. 
Thus, we can use Lemma \ref{lemNoZeroSameCoeff} to conclude that $(\mbfs{0}, \mbfs{0}) \not\in \conv(\{(\Gamma(\mbfs{\beta}),-\mbfs{\beta})\ : \mbfs{\beta} \in D^m\})$.
\endproof


\section{A proof of Theorem~\ref{thmCpolyhedral}}\label{secPolyhedral}

$(\Leftarrow)$ We show that if $\overline{\mbfs{\beta}}\in D^m \setminus I$, then $\Gamma(\overline{\mbfs{\beta}}){}^\T \mbfs{x} - \overline{\mbfs{\beta}}{}^\T \mbfs{y} \ge 0$ is implied by the inequalities indexed by $I$. 
Let $\overline{\mbfs{\beta}}\in D^m \setminus I$.
By assumption, there exists a set $J \subseteq I$ of pairwise isometric points satisfying $\overline{\mbfs{\beta}} \in \cone(J)$.
Hence, there exist $\lambda_{\mbfs{\beta}} \ge 0$ for each $\mbfs{\beta} \in J$ such that $\overline{\mbfs{\beta}} = \sum_{\mbfs{\beta} \in J} \lambda_{\mbfs{\beta}} \mbfs{\beta}$. 
We have $\Gamma(\overline{\mbfs{\beta}}) =  \sum_{\mbfs{\beta} \in J} \lambda_{\mbfs{\beta}}\Gamma(\mbfs{\beta})$ by Lemma~\ref{lemIsoSets}.
This shows that $(\Gamma(\overline{\mbfs{\beta}}), -\overline{\mbfs{\beta}}) \in \cone(\{(\Gamma(\mbfs{\beta}), -\mbfs{\beta}): \mbfs{\beta}\in J\})$.
Hence, $\Gamma(\overline{\mbfs{\beta}})^\T \mbfs{x} - \overline{\mbfs{\beta}}{}^\T \mbfs{y} \ge 0$ is implied by the inequalities indexed by $I$.

\smallskip
\noindent $(\Rightarrow)$ $C_\Gamma$ is a polyhedron, so by Lemma~\ref{lemCCinequalities} there is a finite representation
\[
C_\Gamma = \left\{(\mbfs{x},\mbfs{y}) \in \mbb{R}^n \times \mbb{R}^n:\ \Gamma(\mbfs{\beta})^\T \mbfs{x} - {\mbfs{\beta}}{}^\T \mbfs{y} \ge 0 ~~ \forall\ \mbfs{\beta} \in I\right\}.
\]
Let $\overline{\mbfs{\beta}}\in D^m \setminus I$.
The inequality $\Gamma(\overline{\mbfs{\beta}})^\T\mbfs{x} -\overline{\mbfs{\beta}}{}^\T \mbfs{y} \ge 0$ is valid for $C_\Gamma$, so there exists a set $J \subseteq I$ and positive coefficients $\lambda_{\mbfs{\beta}}$ for each $\mbfs{\beta} \in J$ such that
\[
(\Gamma(\overline{\mbfs{\beta}}),\overline{\mbfs{\beta}}) = \sum_{\mbfs{\beta}\in J}\lambda_{\mbfs{\beta}} (\Gamma(\mbfs{\beta}),\mbfs{\beta}).
\]
Lemma~\ref{lempolyhedraIsometricFacets} states that
\(
\overline{\mbfs{\beta}}{}^\T \mbfs{\beta} = \Gamma(\overline{\mbfs{\beta}})^\T\Gamma(\mbfs{\beta})
\)
for all $\mbfs{\beta} \in J$. 
For each $\mbfs{\beta}'\in J$, we have
\[
 \Gamma(\mbfs{\beta}')^\T\Gamma(\overline{\mbfs{\beta}})  - \mbfs{\beta}'{}^\T \overline{\mbfs{\beta}}= \sum_{\mbfs{\beta}\in J}\lambda_{\mbfs{\beta}} (\Gamma(\mbfs{\beta}')^\T \Gamma({\mbfs{\beta}})-\mbfs{\beta}'{}^\T {\mbfs{\beta}}  ).
\]
The left-hand side is 0 because $\overline{\mbfs{\beta}}$ and $\mbfs{\beta}'$ are isometric, and every summand on the right-hand side is nonnegative because $\lambda_{\mbfs{\beta}} >0$ and $\Gamma(\mbfs{\beta}')^\T \Gamma({\mbfs{\beta}}) -\mbfs{\beta}'{}^\T {\mbfs{\beta}}  \ge 0$ by the non-expansive property of $\Gamma$.
Hence, $\Gamma(\mbfs{\beta}')^\T \Gamma(\mbfs{\beta}) = \mbfs{\beta}'{}^\T {\mbfs{\beta}} $ for all $\mbfs{\beta} \in J$.
As ${\mbfs{\beta}}'$ was arbitrarily chosen in $J$, we see that all elements of $J$ are pairwise isometric and $\overline{\mbfs{\beta}} \in \cone(J)$.
%


\section{Future Work}\label{secFutureWork}

Even though we have fully characterized full-dimensional maximal $Q$-free sets, there are multiple lines of future work on maximal quadratic-free sets.

One future line of work we consider interesting is related to maximal polyhedral $Q$-free sets.
Theorem~\ref{thmCpolyhedral} characterizes when $C_{\Gamma}$ is polyhedral;
loosely speaking, it reduces polyhedrality to finding a set of isometric points that cover $D^m$.
With this in mind, one may consider whether an arbitrary finite set of points in $D^m$ can be extended to an `isometric cover' for some function $\Gamma$. 
The following example illustrates that this is not always possible.
\begin{example}
Set $I := \{\mbfs{\beta}^1, \mbfs{\beta}^2, \mbfs{\beta}^3, \mbfs{\beta}^4\} \subseteq D^2$, where 
\[
\mbfs{\beta}^1 := \big(0,1\big),~ \mbfs{\beta}^2 := \big(1,0\big),~ \mbfs{\beta}^3 := \big(0,-1\big),~\text{and}~\mbfs{\beta}^4 := \big(-\sqrt{\sfrac{2}{3}}, - \sqrt{\sfrac{1}{3}}\big).
\]
The points are drawn in the next figure. 

\begin{center}
\begin{tikzpicture}[scale = .75, every node/.style={scale=0.75}, baseline = 3]

\draw[black!50](-1.15,0)--(1.15,0);
\draw[black!50](0,-1.15)--(0,1.15);
\draw[draw=black!50, fill=none](0,0) circle (1);

\draw[MediumRed, ultra thick,domain=0:90] plot ({cos(\x)}, {sin(\x)});
\draw[MediumPurple, ultra thick,domain=270:360] plot ({cos(\x)}, {sin(\x)});
\draw[MediumBlue, ultra thick,domain=210:270] plot ({cos(\x)}, {sin(\x)});
\draw[JungleGreen, ultra thick,domain=90:210] plot ({cos(\x)}, {sin(\x)});

\draw (90:.85) -- (90:1.15) node[above = .1]{$\mbfs{\beta}^1$};
\draw (0:.85) -- (0:1.15) node[right = .1]{$\mbfs{\beta}^2$};
\draw (-90:.85) -- (-90:1.15) node[below = .1]{$\mbfs{\beta}^3$};
\draw (210:.85) -- (210:1.15) node[below left = .1]{$\mbfs{\beta}^4$};

\end{tikzpicture}
\end{center}
We claim that the set $I$ cannot be used in Theorem~\ref{thmCpolyhedral} to define a non-expansive map $\Gamma$ that corresponds to a full-dimensional $Q$-free polyhedron $C_{\Gamma}$.
Indeed, if $I$ could be used to construct such a $\Gamma$, then the $I$ must cover $D^2$ with isometric pairs of points, that is, each consecutive pair of $\mbfs{\beta}^i$ and $\mbfs{\beta}^{i+1}$ in $I$ must be isometric. 
Thus, the image of the arc $(\mbfs{\beta}^1, \mbfs{\beta}^2)$ under $\Gamma$ must be a rotation of the red arc; similarly the image of the purple arc under $\Gamma$ must be a rotation of the purple arc. 
Given that $C_{\Gamma}$ is full-dimensional, we then conclude that $\Gamma(\mbfs{\beta}^1) = \Gamma(\mbfs{\beta}^3)$ otherwise we would have $(\mbfs{0}, \mbfs{0})\in \conv(\{(\Gamma(\mbfs{\beta}),-\mbfs{\beta})\,:\ \mbfs{\beta}\in D^2\})$ and contradict Theorem~\ref{thm:continuous}.
However, the blue (respectively, the green) arc must be mapped by $\Gamma$ to an arc of the same length. 
The equation $\Gamma(\mbfs{\beta}^1) = \Gamma(\mbfs{\beta}^3)$ then shows that the blue and green arcs must have the same length, which is not true.
\end{example}

We believe that finding a characterization of when a set of points in $D^m$ can be used to define a maximal $Q$-free polyhedron is an important follow-up question in the understanding of maximal $Q$-free sets.

Beyond the homogeneous setting, it is interesting and potentially impactful to consider the non-homogeneous case. 
For instance, which maximal sets in the homogeneous setting are maximal for the non-homogeneous setting, or how can one lift maximal $Q$-free sets to handle the non-homogeneous setting. 

\section*{Statements and Declaration.}
\noindent{\bf Funding.} J. Paat was supported by a Natural Sciences and Engineering Research Council of Canada Discovery Grant [RGPIN-2021-02475]. G. Mu\~noz was supported by the National Research and Development Agency of Chile (ANID) through the Fondecyt Grant 1231522.

\medskip

\noindent{\bf Competing interests.}
The authors have no relevant financial or non-financial interests to disclose.

\medskip

\noindent{\bf Compliance with Ethical Standards.}
The authors have no conflicts of interest to declare that are relevant to the content of this article.

\medskip

\noindent{\bf Acknowledgements}
The authors would like to thank Roberto Cominetti for fruitful discussions; in particular, for discussions leading to Remark \ref{rmk:contracting}, for noticing the subtlety in Theorem 17.3 from~\cite{R1970} discussed in Remark \ref{rmk:rockafellar}, and for the example provided in the latter.

\bibliographystyle{spmpsci}
\bibliography{references}

\end{document}